\documentclass{article}
\usepackage{authblk}
\usepackage{amsfonts,amsmath, amssymb,mathrsfs,mathtools,amsthm} 
\usepackage{hyperref, enumitem}

\usepackage[doi=false, url=false, eprint=false, maxbibnames=10, firstinits=true]{biblatex}
\newtheorem{thm}{Theorem}[section]
\newtheorem{prop}[thm]{Proposition}
\newtheorem{lem}[thm]{Lemma}
\newtheorem{rem}[thm]{Remark}
\newtheorem{define}[thm]{Definition}
\numberwithin{equation}{section}

\newcommand{\R}{\mathbb{R}}
\newcommand{\supp}{\operatorname{supp}}
\newcommand{\A}{\mathcal{A}}
\newcommand{\F}{\mathscr{F}}
\newcommand{\LRFH}{{\rm LRFH}}
\newcommand{\Crit}{\operatorname{Crit}}
\newcommand{\wt}{\widetilde}

\title{Two-boost problem\\ for the rotating Kepler problem}
\author{Jagna Wi\'{s}niewska }
\affil{Department of Mathematics, University of Augsburg}

\begin{document}

\maketitle

\begin{abstract}
The two-boost problem in space mission design asks whether two
points of phase space can be connected with the help of two boosts of given
energy. In this article we provide a positive answer for the rotating Kepler problem by generalizing the definition of Lagrangian Rabinowitz Floer homology from the class introduced in \cite{Wisniewska2024} to a strictly broader class, and computing the corresponding homology in this more general setting. The principal technical challenge is the non-compactness of the energy hypersurface both near the collision singularity and in the infinity.
\end{abstract}

\section{Introduction}

The two-boost problem considers a very practical question in astrodynamics and the space mission design: Can we send a rocket in the gravitational field of one, or more, celestial bodies, using the engines only at the beginning and at the end of the journey?

The research on the two-boost problem has been initiated by the ground-breaking work of Hohmann on the attainability of heavenly bodies \cite{Hohmann}. Hohmann considered the two-boost problem in the setting of the Kepler problem, with the rocket moving under the influence of the gravitational field of just one heavenly body. The Hohmann transfer is a transfer between two circular orbits in the Kepler problem with the help of a Kepler ellipse which is tangent to the two circles. It requires two tangential boosts, one to transfer from the first circle to the ellipse and a second one to transfer from the ellipse to the second circle. This means that for the Kepler problem, two points in the phase space can always be connected with the help of two boosts. Although this work was written almost a hundred years ago, the Hohmann transfer is still one of the crucial ingredients in space mission design \cite{Vallado}.

In this paper we answer the two-boost problem for the rotating Kepler problem.
The rotating Kepler problem models the motion of a massless body in the gravitational field of a (heavy) celestial body in the rotating coordinates. It differs from the stationary Kepler problem by the presence of the Coriolis force. The corresponding Hamiltonian $K: T^*(\R^2\setminus\{0\})\to \R$ is of the form:
\begin{equation}\label{Kepler}
K(q,p) := \frac{1}{2}|p|^2 + p_1q_2-p_2q_1 - \frac{1}{|q|}.
\end{equation}
Note that all the level sets of $K$ are non-compact. In fact, they all have two types of non-compactness: one in the infinity representing the possibility of the satellite flying away into cosmos, and the other one coming from the singularity and modelling the scenario of the satellite crushing into the celestial body - a collision. 

The general setup for the two-boost problem is as follows: Consider the cotangent bundle $T^*Q$ of a manifold $Q$ with its canonical exact symplectic form $\omega=d\lambda$, $\lambda=p\,dq$, and a Hamiltonian $H:T^*Q\to\R$. Given two points $q_0,q_1\in Q$ and an energy value $c$, the two-boost problem asks for the existence of a Hamiltonian orbit of energy $c$ connecting the cotangent fibers $T_{q_0}^*Q$ and $T_{q_1}^*Q$. In other words, we are interested in chords $(v,\eta)\in W^{1,2}([0,1], T^*Q)\times\R$ satisfying
\begin{equation}\label{Def(v,eta)}
\partial_tv=\eta X_H(v),\quad v([0,1])\subseteq H^{-1}(c),\quad \textrm{and}\quad v(i)\in T_{q_i}^*Q\quad \textrm{for}\quad i=0,1.
\end{equation}

In the following theorem we answer the two-boost problem for the rotating Kepler problem:
\begin{thm}\label{thm:2Boost}
Let $K$ be the Hamiltonian defined in \eqref{Kepler}.
Fix $q_0,q_1\in \R^2\setminus \{0\}$ and denote $R:= \max\{|q_0|, |q_1|\}$. Then for every $c \geq 2 R^2$ on $K^{-1}(c)$ there exist at least two solutions 
to \eqref{Def(v,eta)} on the energy level $K^{-1}(c)$ (including collision trajectories\footnote{An exact definition of a collision trajectory can be found in section \ref{ssec:Levi-Civita}.}).
\end{thm}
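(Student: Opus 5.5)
The plan is to obtain the chords in \eqref{Def(v,eta)} as critical points of the Lagrangian Rabinowitz action functional
\[
\A^H(v,\eta)=\int_0^1 v^*\lambda-\eta\int_0^1 H(v(t))\,dt
\]
on paths from $T^*_{q_0}Q$ to $T^*_{q_1}Q$. We then show that the associated Lagrangian Rabinowitz Floer homology, in the generalized class of this paper, is nonzero and has total rank at least two. The principal difficulty is that $K^{-1}(c)$ is non-compact, both at infinity and near the collision $q=0$, so the homology of \cite{Wisniewska2024} is not directly defined.

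\textbf{Regularization at the collision.} First I would remove the collision singularity. Using the Levi-Civita map $z\mapsto z^2$, the rescaled Hamiltonian $|z|^2(K-c)$ extends smoothly over $z=0$ to a Hamiltonian $\wt K_c$ on $T^*\R^2$. Chords of $\wt K_c$ on $\wt K_c^{-1}(0)$ between the lifted fibres $T^*_{\pm\sqrt{q_0}}\R^2$ and $T^*_{\pm\sqrt{q_1}}\R^2$ project to solutions of \eqref{Def(v,eta)}, with collision trajectories included by definition. Chords differing by the deck involution $z\mapsto -z$ give the same physical solution. This forces a careful count of preimages, which is where the factor two will matter.

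\textbf{Control at infinity.} The noncompactness at infinity is handled by establishing the defining properties of the generalized class. For $c\ge 2R^2$ I would show that the relevant component of the level set is fibrewise star-shaped (Liouville transversality of $p\,\partial_p$) in the region containing the fibres over $|q|\le R$. I would then prove an a priori $L^\infty$ bound on Floer strips with finite energy. The planned route has three steps. First, bound $\eta$ via the Rabinowitz action estimate $|\eta|\le C(|\A|+1)$ near critical points. Second, confine the $q$-component using a maximum-principle argument on $|q|^2$, exploiting the convexity of the rotating term far from the origin. Third, bound $|p|$ from the energy constraint.

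I expect this compactness step to be the main obstacle. The Coriolis term $p_1q_2-p_2q_1$ destroys convexity in $p$ at large $|q|$, so the sign computation for the maximum principle must use the quantitative hypothesis $c\ge 2R^2$ to keep strips away from the region where star-shapedness fails. If the direct route fails, my fallback is to deform $K$ outside a large compact set to a Hamiltonian that is fibrewise convex at infinity. I would then show that no chords or strips escape into the modified region, using an action-window argument.

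\textbf{Computation by invariance.} With compactness in hand, I would homotope $\wt K_c$ through the generalized class to a fibrewise convex Hamiltonian, for instance by scaling the rotation term to zero (the regularized Kepler problem). Invariance of the homology applies provided compactness holds uniformly along the homotopy. For the convex Hamiltonian, $\LRFH$ is isomorphic to the homology of the based path space between the two fibres of $\R^2$ (or of its Levi-Civita cover). That path space is contractible for each pair of lifts but has several components coming from the choices $\pm\sqrt{q_i}$. The homology is therefore nonzero, so critical points exist. Counting chords modulo the deck involution then yields at least two distinct physical solutions. Nondegeneracy can be achieved by a small perturbation, with a limiting argument handling the degenerate case.
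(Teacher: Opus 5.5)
Your Levi-Civita step and your final count match the paper. The compactness and computation steps in between have a genuine gap.

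\textbf{Why the direct route does not work.} You plan to set up $\LRFH$ for the regularized Hamiltonian $K_c$ of \eqref{regHam} itself (your $\wt K_c$ up to a factor $4$), prove $L^\infty$ bounds for its strips, and compute it by scaling the rotation term to zero. None of this is available.
\begin{itemize}
\item $K_c$ is not of the form $H_a-h$ with $dh$ compactly supported, so it is not in the paper's class at all.
\item $K_c$ contains the quartic term $2|q|^2(p_1q_2-p_2q_1)$. Neither this paper nor \cite{Wisniewska2024} provides a maximum principle for Floer strips in that situation.
\item Your third step fails as stated. Strips do not lie on the energy surface, so the energy constraint gives at best an averaged bound on $p$ where the gradient is small, not a pointwise one.
\item The endpoint of your homotopy is $\frac12|p|^2-4c|q|^2-4$. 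This is an inverted oscillator with non-compact level set. No identification of its $\LRFH$ with path-space homology is available.
\item Uniform compactness along a homotopy that changes the behaviour at infinity is exactly the hard problem, not a side condition.
\end{itemize}
Your diagnosis of the difficulty is also off. The Coriolis term is linear in $p$, so fibrewise convexity never fails. Moreover $dK_c(p\partial_p)-K_c=\frac12|p|^2+4c|q|^2+4>0$, so $K_c^{-1}(0)$ is fibrewise star-shaped for every $c>0$. The hypothesis on $c$ has nothing to do with star-shapedness or with strips.

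\textbf{The missing idea: one never needs Floer theory for $K_c$.}
\begin{itemize}
\item The paper works in the class $H_a-h$, $h\in\mathcal{H}$: the rotating quadratic Hamiltonian \eqref{DefHa} plus perturbations with compactly supported differential.
\item In this class the strip bounds of \cite{Wisniewska2024} carry over and invariance holds. The positive homology is computed at a large constant $h$ from the explicit chord count of \cite{Wisniewska2024} (Theorem \ref{thm:posLRFH}).
\item The paper then builds $h$ so that $H_a-h=K_c$ on $\{|q|\le R_1\}\cap\{H_a\le b\}$, with $R_1$ the larger modulus of the lifted endpoints.
\item It proves that every chord of $H_a-h$ stays in this set and is therefore a chord of $K_c$ (Theorem \ref{thm:Kepler}).
\end{itemize}
Only chords have to be confined, not strips. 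The confinement is not an action-window argument; it uses rotational symmetry:
\begin{itemize}
\item $p_\theta$ is conserved along chords.
\item At the endpoints, which lie on $K_c^{-1}(0)$ over $|q|\le R_1$, the hypothesis on $c$ gives $p_\theta\le 2c$ (Lemma \ref{lem:pTheta}). This is where the hypothesis on $c$ enters.
\item $h$ is designed so that $p_\theta\le 2c$ forces $\{H_a-h,\{H_a-h,r\}\}>0$ wherever $\{H_a-h,r\}=0$ on the level set (Lemma \ref{lem:empty}). Hence $r$ has no interior maximum above $R_1$.
\end{itemize}
Your fallback (modify outside a compact set) has the right shape. Without this confinement mechanism, and without a target class where the homology is actually known, it does not close.

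\textbf{On the count.} State explicitly that you need nonempty critical sets for two lift pairs not exchanged by $z\mapsto -z$, for example $(z_0,z_1)$ and $(z_0,-z_1)$. These are separate contractible path spaces, not components of a single path space.
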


To prove this theorem we first formulate the two-boost problem as an variational problem
using the \emph{Rabinowitz action functional}, whose critical points correspond to the solutions of \eqref{Def(v,eta)}. Let us recall its definition: 

For $q_0,q_1\in Q$ we consider the following space of paths between $q_0$ and $q_1$:
$$
\mathscr{H}_{q_0,q_1}:= \left\lbrace v \in W^{1,2}([0,1], T^*Q)\ \big|\ v(i)\in T^*_{q_i}Q\quad \textrm{for}\quad i=0,1\right\rbrace.
$$ 
For a Hamiltonian $H:T^*Q \to \R$ and energy value $c\in \R$ the corresponding \emph{Rabinowitz action functional} is defined
\begin{align*}
\A^{H-c}_{q_0,q_1} & :\mathscr{H}_{q_0,q_1}\times \R\to\R,\\
\A^{H-c}_{q_0,q_1}(v,\eta) & := \int_0^1 \lambda(\partial_t v)dt - \eta \int_0^1 H(v(t))dt.
\end{align*}
This action functional was introduced by Rabinowitz in \cite{Rabinowitz1978} on the loop space to study the existence of periodic orbits on compact star-shaped hypersurfaces in $\R^{2n}$. His findings inspired Weinstein \cite{Weinstein1979} to formulate the famous \emph{Weinstein conjecture} on the existence of periodic orbits on compact contact-type hypersurfaces. In \cite{Floer1989} Floer revolutionized the field of symplectic geometry by constructing a homology using the symplectic action functional. The chain complex of this homology, called \emph{Floer homology}, is generated by the critical points of the symplectic action functional, which correspond to the $1$-periodic orbits of the Hamiltonian flow. The great advantage of using the Floer theory in the analysis of Hamiltonian systems is the continuation principle, which assures that a Floer-type homology is invariant under compact perturbations of the Hamiltonian. In particular, under the perturbations of the system the set of solutions may undergo bifurcations, yet the the associated Floer-type homology will remain unchanged as long as the geometric properties of the hypersurface are preserved.

In \cite{CieliebakFrauenfelder2009} Cieliebak and Frauenfelder considered the 
Rabinowitz action functional on the free loop space associated to a compact, contact-type hypersurface in an exact symplectic manifold. They constructed a Floer-type homology for this setting and called it the Rabinowitz Floer homology. However, for the purpose of studying the two-boost problem we are more interested in the Floer-type homology of the Rabinowitz action functional defined on the chord space between two Lagrangians. This algebraic invariant was constructed by Merry in \cite{Merry2014} and is called the Lagrangian Rabinowitz Floer homology. He showed that for high enough energy level sets in magnetic Hamiltonian systems the corresponding Lagrangian Rabinowitz Floer homology (LRFH) is well defined and non-trivial, hence providing a positive answer to the two-boost problem for this setting. However, in both cases the Floer-type homologies were defined under the assumption that the energy hypersurface $H^{-1}(c)$ is \emph{compact}. Only recently, this author together with Pasquotto generalized the construction of Rabinowitz Floer homology to include non-compact hypersurfaces \cite{Pasquotto2017, Pasquotto2018}.

The first step of extending the definition of Lagrangian Rabinowitz Floer homology to \emph{non-compact} hypersurfaces has been done by this author together with Cieliebak, Frauenfelder and Miranda in \cite{Wisniewska2024}. In this paper we build on this result and extend it to a larger class of hypersurfaces in order to apply it in the proof of Theorem \ref{thm:2Boost}.

For a fixed constant $a>0$ we define a Hamiltonian on $T^*\R^2$:
\begin{equation}\label{DefHa}
H_a:=\frac{1}{2}(p_1^2+p_2^2)+a(p_1q_2-p_2q_1)=\frac{1}{2}\left( p_r^2+\frac{p_\theta^2}{r^2}\right)+ap_\theta.
\end{equation}

Furthermore, we define a set of perturbations $\mathcal{H}\subseteq C^\infty(T^*\R^2)$, such that $h\in \mathcal{H}$ if and only if $dh$ is compactly supported and
\begin{equation}\label{DefH}
h(q,p)-dh(p\partial_p)+\frac{1}{2}|p|^2>0, \qquad \forall\ (q,p)\in T^*\R^2.
\end{equation}
Let us denote the set of such Hamiltonians $\mathcal{H}$. Note, that $\mathcal{H}$ is convex. Moreover, the set of positive constants satisfies $\R_+\subseteq \mathcal{H}$ and for every $a\in \R_+$ and $h\in \mathcal{H}$ we have $h+a\in \mathcal{H}$.

In the following theorem we define the Lagrangian Rabinowitz Floer ho\-mo\-lo\-gy in this setting and show that it is invariant under the set of perturbations $\mathcal{H}$ as in \eqref{DefH}:
\begin{thm}\label{thm:main}
For any Hamiltonian $H_a$ as in \eqref{DefHa}, a perturbation $h \in \mathcal{H}$, and any two points $q_0,q_1\in \R^2$, the Lagrangian Rabinowitz Floer homology $LRFH_*\left(\A^{H_a-h}_{q_0,q_1}\right)$ is well defined. Moreover, for every two perturbations $h_0,h_1 \in \mathcal{H}$ the associated homologies are isomorphic:
$$
\LRFH_*\left(\A^{H_a-h_0}_{q_0,q_1}\right)=\LRFH_*\left(\A^{H_a-h_1}_{q_0,q_1}\right).
$$
\end{thm}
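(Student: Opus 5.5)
The plan follows the standard scheme for Rabinowitz Floer homology on non-compact hypersurfaces, as in \cite{Wisniewska2024}. We establish compactness of the moduli spaces of gradient flow lines of $\A^{H_a-h}_{q_0,q_1}$ for the standard metric on $\mathscr{H}_{q_0,q_1}\times\R$ (induced by a compatible almost complex structure on $T^*\R^2$ which is standard outside a compact set). From this we obtain a well-defined boundary operator, and then continuation homomorphisms for homotopies $h_s=(1-\beta(s))h_0+\beta(s)h_1$ inside the convex set $\mathcal{H}$. Compactness has three components: (i) a uniform bound on the Lagrange multiplier $\eta$ along flow lines of bounded action; (ii) a uniform $L^\infty$ bound on the loops $v$ in $T^*\R^2$; (iii) the absence of bubbling. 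Bubbling is automatic since $\omega$ is exact and the Lagrangians $T^*_{q_i}\R^2$ are exact with $\lambda|_{T^*_{q_i}}=0$. Given (i)–(iii), standard Floer theory and a Morse--Bott (or generic perturbation) argument yield the chain complex. Invariance then follows from the usual composition-of-continuation-maps argument.

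For (i), we use the Liouville field $Y=p\partial_p$. Its key property is
\[
dH_a(Y)-(H_a-h)=\tfrac12|p|^2+h-dh(p\partial_p)>0,
\]
where we used that $H_a$ is quadratic in $p$ plus a term linear in $p$, so $dH_a(Y)=|p|^2+a(p_1q_2-p_2q_1)$. Thus condition \eqref{DefH} says precisely that $\lambda(X_{H_a-h})>0$ on the zero level. Since $dh$ is compactly supported, outside a compact set in $q$ this quantity is $\frac12|p|^2+\text{const}$. We aim for the standard estimate: if $\|\nabla\A(v,\eta)\|$ is small, then $|\eta|\le C(|\A(v,\eta)|+1)$. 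For this we need a lower bound $\lambda(X_{H})\ge\varepsilon>0$ on a neighbourhood of the level set $\{H_a-h=0\}$. Near points where $p$ is bounded away from zero, this follows from the displayed inequality. Where $p\approx0$ and $q$ is large, the condition $H_a-h=0$ forces $h\approx 0$, which is impossible since $h$ equals a positive constant at infinity (we must check this: compact support of $dh$ together with \eqref{DefH} evaluated at $p=0$ gives $h>0$ everywhere). Hence we obtain a uniform positivity that survives at infinity, and the argument of \cite{Wisniewska2024} applies verbatim. For homotopies $h_s$ the same bound holds uniformly by convexity of $\mathcal{H}$.

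For (ii), the $L^\infty$ bound, we expect the main obstacle. The hypersurface is non-compact in the $q$-direction, and in rotating coordinates $X_{H_a}$ is not complete in a bounded way. We would first pass to coordinates that undo the rotation: the flow of $ap_\theta$ is rotation, which commutes with $\frac12|p|^2$, so outside $\supp dh$ the flow of $H_a-h$ is a free flow composed with a rotation. In particular, we have explicit control of chords with bounded $\eta$. Given the $\eta$ bound from (i), we then prove a maximum-principle type estimate for Floer strips outside a large compact set. There the Floer equation reads $\partial_sv+J(\partial_tv-\eta X_{H_a-c})=0$ with $c$ the constant value of $h$. We apply a convexity argument to the function $\rho=\frac12|q|^2+\frac12|p|^2$, or to $|q|^2$ in suitably rotated coordinates. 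If $\rho$ is subharmonic along solutions, then combined with the boundary conditions $v(s,i)\in T^*_{q_i}$ (on which $q$ is fixed, giving a Neumann-type control) it cannot attain an interior maximum. If direct subharmonicity fails because of the Coriolis term, we would fall back on the approach of \cite{Wisniewska2024}. That is, we bound the time spent far away using the action and energy estimate $\int\|\partial_sw\|^2\le\A(w_-)-\A(w_+)$, and combine it with a local $L^2$-to-$L^\infty$ estimate on regions where the equation is linear (indeed, outside a compact set it is a linear equation with constant coefficients in the rotating frame). Everything is done uniformly in $s$ for the continuation homotopy, which finishes the proof.
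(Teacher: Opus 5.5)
Your outline has the right overall shape: exactness rules out bubbling, $p\partial_p$ controls $\eta$, and a maximum principle controls $v$. But it skips exactly the points where non-compactness matters, and the continuation step fails as written. You connect arbitrary $h_0,h_1\in\mathcal{H}$ by a single linear homotopy and assert compactness ``uniformly in $s$''. Along an $s$-dependent trajectory the action is not monotone: $\partial_s\A^{H_a-h_s}(u(s))=\|\nabla\A^{H_a-h_s}(u(s))\|^2+\eta(s)\int_0^1\partial_sh_s(v)\,dt$. So an action bound needs a bound on $\|\eta\|_{L^\infty}$, while your $\eta$-bound needs an action bound, since it holds only where the gradient is small and is propagated along the trajectory via the energy. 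This circle closes only if $\|\partial_sh_s\|_{L^\infty}$ is small compared with $\alpha$, $\beta$ and $\|J\|_{L^\infty}$. That is the condition \eqref{CondGamma} under which Theorem \ref{thm:FloerBounds} and Lemma \ref{lem:ActBound} are proved. Stretching in $s$ does not help, because what enters is $\int\|\partial_sh_s\|_{L^\infty}ds\ge\|h_1-h_0\|_{L^\infty}$. You must therefore cut the segment from $h_0$ to $h_1$ into short pieces and compose the resulting isomorphisms; convexity of $\mathcal{H}$ gives uniform $\alpha>0$, $\beta<\infty$ and a common compact support for $dh_s$ along the segment. The same estimates give the Novikov finiteness condition \eqref{Novikov} (Lemma \ref{lem:Novikov}), a hypothesis of Theorem \ref{thm:DefLRFH} needed for the continuation maps, which you never address. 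You also omit continuous compactness of the critical set (Definition \ref{def:contComp}, condition 2 of Theorem \ref{thm:DefLRFH}). To reach a Morse functional by compactly supported perturbations, all chords of all nearby perturbations, of arbitrary action, must stay in one compact set, and every bound in your sketch depends on the action. The paper proves this separately in Lemma \ref{lem:Chord} by a maximum principle for $r$ along chords. At an interior maximum of $r$ outside $\supp dh$ one gets $p_r=0$ and $\eta^2p_\theta^2/r^3\le 0$, hence $p_r=p_\theta=0$. This contradicts $H_a=h\equiv\mathrm{const}>0$ there.

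Your step (i) is the compact-case argument and does not transfer. A neighbourhood of $\{H_a-h=0\}$ on which $\lambda(X_{H_a-h})\ge\varepsilon$ is useless here, for two reasons. First, $d(H_a-h)$ is unbounded, so a small gradient does not confine $v$ to such a neighbourhood. Second, the error term $d\A(v,\eta)[Y,0]$ is controlled only by $\|\nabla\A(v,\eta)\|$ times $\|p\|_{L^2}$, which has no a priori bound. What works is the global estimate $d(H_a-h)(Y)-(H_a-h)\ge\max\{\alpha,\tfrac12|p|^2-\beta\}$, whose quadratic part absorbs $\varepsilon\|p\|_{L^2}$ (Lemmas \ref{lem:BoundEta} and \ref{lem:BoundEtaP}). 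Note that \cite{Wisniewska2024} treats the narrower class with $h>0$ and $h-dh(p\partial_p)>0$, so it does not apply verbatim; this is why the constant $\beta$ of \eqref{DefBeta} appears. (Your displayed identity should read $d(H_a-h)(Y)-(H_a-h)$.) For (ii), the missing ingredient is that almost-critical points of bounded action are bounded in $L^\infty$ in $(q,p)$. This is Lemma \ref{lem:BoundQ} and the $p$-estimate after it, which rest on $q\,dq(X_{H_a})=q\cdot p$ and $p\,dp(X_{H_a})=0$, so the Coriolis term drops out. It produces a compact $K_\varepsilon$ containing both asymptotic ends and every small-gradient slice of a trajectory. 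The Alexandrov maximum principle then only has to bound $v$ on components of the complement of $v^{-1}(K_\varepsilon)$, where $h_s$ is constant and $J_s=\mathbb{J}$. Your subharmonicity of $\frac12(|q|^2+|p|^2)$ is not established, since the $\partial_s\eta$-term has no sign. Passing to a rotating frame makes the boundary condition at $t=1$ depend on $\eta(s)$. As written, (ii) rests entirely on the unspecified fallback.
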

Having defined the Lagrangian Rabinowitz Floer homology we show that it is non-vanishing, as that will imply that the critical set of the associated Rabinowitz action functional is non-empty, which is equivalent with the existence of solutions to the two-boost problem \eqref{Def(v,eta)}. However, to prove the existence of solutions to \eqref{Def(v,eta)} it suffices to show that the positive Lagrangian Rabinowitz Floer homology is non-vanishing. That is because the complex of the positive Lagrangian Rabinowitz Floer homology is generated by the solutions $(v,\eta)$ of \eqref{Def(v,eta)} with $\eta>0$. We calculate the positive Lagrangian Rabinowitz Floer homology in the following theorem:

\begin{thm}\label{thm:posLRFH}
For any Hamiltonian $H_a$ as in \eqref{DefHa}, a perturbation $h \in \mathcal{H}$, and any two points $q_0,q_1\in \R^2$, the positive Lagrangian Rabinowitz Floer homology $LRFH_*^+(\A^{H_a-h}_{q_0,q_1})$ is well defined and equal to
$$
\LRFH_*^+\left(\A^{H_a-h}_{q_0,q_1}\right) = 
\begin{cases}
\mathbb{Z}_2 & \textrm{for}\quad *=\frac{1}{2},\\
0 & \textrm{otherwise}.
\end{cases}
$$
\end{thm}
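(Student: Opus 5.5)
The plan is to reduce, via the invariance statement of Theorem \ref{thm:main} (and its analogue for the positive part), to a single perturbation $h\equiv c$ with $c>0$ a positive constant, which lies in $\mathcal{H}$. Then I compute $\LRFH^+_*(\A^{H_a-c}_{q_0,q_1})$ directly. First I set up the positive homology. Critical points $(v,\eta)$ satisfy $\A(v,\eta)=\int_0^1\lambda(\partial_t v)\,dt=\eta\int_0^1 dH_a(p\partial_p)\,dt$ on the level set. Using \eqref{DefH}, this action has the sign of $\eta$, and it is bounded away from zero on chords with $\eta\neq 0$. Hence the action filtration splits off a subcomplex and a quotient, generated by chords with $\eta>0$. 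I would verify that the compactness results established for Theorem \ref{thm:main} (uniform $L^\infty$ bounds on $v$ and bounds on $\eta$ along Floer strips with bounded action) respect an action window $(\epsilon,\infty)$. Continuation maps for homotopies $h_s$ inside the convex set $\mathcal{H}$ preserve positivity of action up to small errors. The usual argument, comparing with the window $(-\epsilon,\infty)$ and using that $\eta=0$ critical points form the constant-chord stratum $T^*_{q_0}\cap T^*_{q_1}$ (empty if $q_0\neq q_1$), then shows that $\LRFH^+$ is well defined and invariant in $h\in\mathcal{H}$.

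Second, I compute the model. For $h=c$ the Hamiltonian is $H_a=\tfrac12|p|^2+a(p_1q_2-p_2q_1)$, which is quadratic. In coordinates rotating with angular speed $a$, its flow is free motion. Explicitly, if $R_s$ denotes rotation by angle $s$, then $q(t)=R_{-a\eta t}(q_0'+\eta t\, \tilde p)$, and the orbit is a straight line in the rotating frame. A chord with time $\eta>0$ from $T^*_{q_0}$ to $T^*_{q_1}$ on the level $\{H_a=c\}$ therefore amounts to solving
\[
R_{a\eta}q_1=q_0+\eta\,u,\qquad \tfrac12|u|^2-\tfrac{a^2}{2}|q_0|^2\text{-type terms}=c,
\]
which is one scalar equation in $\eta$ after eliminating $u$. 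Rather than counting these chords directly, which may give infinitely many solutions as $\eta\to\infty$ and degenerate behaviour, I would prefer a second deformation that keeps the energy positive: vary the parameter $a\to 0$. I would check that the compactness arguments are uniform in $a\in[0,a_0]$ on the chosen level. At $a=0$ we have the free Hamiltonian $\tfrac12|p|^2-c$, and there the positive chords are unique. Indeed $u=(q_1-q_0)/\eta$ with $|u|^2=2c$ forces $\eta=|q_1-q_0|/\sqrt{2c}$, giving a single nondegenerate critical point; for $q_0=q_1$ one perturbs $h$ slightly in $\mathcal{H}$ to move off. The Maslov/Conley--Zehnder type index of a single geodesic chord without conjugate points in the fibrewise convex setting, shifted by the half-integer normalization used in \cite{Wisniewska2024}, gives degree $\tfrac12$. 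The result is $\mathbb{Z}_2$ in degree $\frac12$ and $0$ otherwise.

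The main obstacle is the deformation in $a$ (equivalently, justifying that $H_a$ and $H_0$ lie in one continuation class). When $a\neq0$ the Coriolis term is not a compact perturbation, so the invariance of Theorem \ref{thm:main} does not apply directly. I would try to handle it via the symplectic change of coordinates $\Phi_a(q,p)=(q,p-aJq)$, which turns $H_a$ into $\tfrac12|p|^2-\tfrac{a^2}{2}|q|^2$, i.e.\ a repulsive harmonic potential. I would then prove a priori bounds for Floer strips of the family $\tfrac12|p|^2-\tfrac{s^2}{2}|q|^2-c$, $s\in[0,a]$, by a convexity/maximum principle argument on $|q|^2$: the function $\rho=|q|^2$ along chords satisfies $\ddot\rho\ge0$ for repulsive potentials. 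Since $\Phi_a$ moves the fibres $T^*_{q_i}$ only by translation inside themselves, the Lagrangian boundary conditions are preserved.
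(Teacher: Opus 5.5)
Your first step (well-definedness and invariance of $\LRFH^+_*$ over $\mathcal{H}$, then reduction to a constant $h\equiv c$) is the paper's route via Proposition \ref{prop:DefLRFH+} and Lemma \ref{lem:positive}. The gap is the second step, the deformation $a\to0$. Your whole computation rests on it, and it does not go through as proposed. First, $\Phi_a(q,p)=(q,p-aJq)$ is not a symplectomorphism. Indeed $\Phi_a^*\lambda=\lambda-a\langle Jq,dq\rangle$ and $d\langle Jq,dq\rangle=\pm2\,dq_1\wedge dq_2\neq0$. So $H_a$ corresponds to $\frac12|p|^2-\frac{a^2}{2}|q|^2$ only for the twisted form $\omega\pm2a\,dq_1\wedge dq_2$: the Coriolis term becomes a constant magnetic field plus a centrifugal potential. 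For the standard $\omega$ the repulsive oscillator is genuinely different. Its linearized flow has eigenvalues $\pm a$, whereas that of $H_a$ has eigenvalues $\pm ia$ (chords of $H_a$ are straight lines in the inertial frame, with $|p|$ conserved). Second, even with a correct model, the homotopy $s\mapsto H_{a(s)}$ has $\partial_sH_{a(s)}=a'(s)(p_1q_2-p_2q_1)$. This is unbounded and its differential is not compactly supported, so neither Theorem \ref{thm:DefLRFH} nor Section \ref{sec:Bounds} applies. Lemma \ref{lem:ActBound}, Lemma \ref{lem:Novikov} and Lemma \ref{lem:positive} all need $\|\partial_sh_s\|_{L^\infty}$ finite and small, cf.\ \eqref{CondGamma} and \eqref{inqAct}. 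Convexity of $|q|^2$ along chords does hold, even for $H_a$ itself: $\frac{d^2}{dt^2}|q|^2=2\eta^2|p|^2$. But it constrains only critical points; it is not a maximum principle for Floer strips, which is the hard analytic input. You would therefore need a new non-compact continuation theory that the proposal does not supply.

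The missing observation is that no deformation in $a$ is needed. All positive constants lie in $\mathcal{H}$, so invariance lets you take $c$ as large as you like, and then the direct count you set aside is elementary. Let $R_s$ be counterclockwise rotation by $s$, let $q_0\times w:=q_{0,1}w_2-q_{0,2}w_1$, and let $T=\eta$ be the chord's duration. In the inertial frame $R_{a\tau}q$, a chord is a straight segment from $q_0$ to $R_{aT}q_1$. Since $H_a=\frac12|p|^2-a\,(q\times p)$ with both terms conserved, the energy condition becomes $f(T)=0$, where
\[
f(T)=cT^2+aT\,(q_0\times R_{aT}q_1)-\tfrac12|R_{aT}q_1-q_0|^2,\qquad f'(T)=T\bigl(2c+a^2\langle q_0,R_{aT}q_1\rangle\bigr).
\]
For $q_0\neq q_1$ and $c>\frac12a^2|q_0||q_1|$ we have $f(0)<0$ and $f$ strictly increasing on $(0,\infty)$, so there is exactly one positive chord. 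At fixed $c>0$ there are no chords with large $\eta$ at all; the proliferation of chords only happens as $c\searrow0$. This is Lemma \ref{lem:calculation}. The paper rescales by the symplectomorphism $\varphi_a(q,p)=(q/\sqrt a,\sqrt a\,p)$, which satisfies $H_a\circ\varphi_a=aH_1$, and imports the count and the index $\frac12$ from \cite{Wisniewska2024}; a single generator forces $\partial^+=0$. Finally, your suggestion to handle $q_0=q_1$ by perturbing $h$ cannot work. Perturbing $h$ does not move the fibres, so the circle of $\eta=0$ critical points in $T^*_{q_0}\R^2\cap(H_a-h)^{-1}(0)$ persists. Note that the paper's argument also only covers $q_0\neq q_1$.
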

An immediate consequence of this theorem is that $\Crit \left(\A^{H_a-h}_{q_0,q_1}\right)$ is non-empty for all Hamiltonians $H_a$ as in \eqref{DefHa} and $h \in \mathcal{H}$, which provides an affirmative answer to the two-boost problem in this case.
However, our goal is to prove Theorem \ref{thm:2Boost} and to answer the two-boost problem for energy level sets of Hamiltonian $K$ as in \eqref{Kepler}. Thus, we need to relate the two Hamiltonians $K$ and $H_a-h$. 

First, recall that the the Hamiltonian $K$ of the rotating Kepler problem introduced at \eqref{Kepler} is not defined on the whole $T^*\R^2$, but has a singularity at $0$. The singularity models the possibility of the collision of the spacecraft with the celestial body. In order to deal with the singularity of the Kepler problem and the corresponding non-compactness of the energy level sets, we will use a method presented in \cite{Levi1920} by Levi-Civita called the \emph{Levi-Civita regularization}.

For a fixed $c>0$ Levi-Civita defined the following Hamiltonian on $T^*\R^2$
\begin{equation}\label{regHam}
K_c(q,p):=\frac{1}{2}|p|^2+2 |q|^2(p_1q_2-p_2q_1-2c)-4.
\end{equation}
We will refer to it as the Hamiltonian of the regularized rotating Kepler problem or simply as the regularized Hamiltonian. The Hamiltonian dynamics on $K_c^{-1}(0)$ relates to the Hamiltonian dynamics on $K^{-1}(c)$ in the following way:

\begin{rem}\label{rem:DoubleCover}
Fix a regular value $c\in \R$ of $K$ and two points $z_0,z_1\in \mathbb{C}\setminus\{0\}$. Then for every solution $(v, \eta)\in \mathscr{H}_{z_0^2,z_1^2}\times \R$ of \eqref{Def(v,eta)} on $K^{-1}(c)$ there exist two solutions $(v_1,\eta_1)\in \mathscr{H}_{z_0,z_1}\times \R$ and $(v_2,\eta_2)\in \mathscr{H}_{-z_0,-z_1}\times \R$ of \eqref{Def(v,eta)} on $K_c^{-1}(0)$. And vice versa: for every solution $(v,\eta)\in \mathscr{H}_{z_0,z_1}\times \R$ of \eqref{Def(v,eta)} on $K_c^{-1}(0)$ satisfying $v([0,1])\cap T_0^*\mathbb{C}=\emptyset$, there exists a solution $(\tilde{v},\tilde{\eta}) \in \mathscr{H}_{z_0^2,z_1^2}\times \R$ of \eqref{Def(v,eta)} on $K^{-1}(c)$.
If a solution $(v,\eta)\in \mathscr{H}_{z_0,z_1}\times \R$ of \eqref{Def(v,eta)} on $K_c^{-1}(0)$ satisfies $v([0,1])\cap T_0^*\mathbb{C}\neq \emptyset$, then there exists a collision trajectory $(\tilde{v},\tilde{\eta})$ which solves \eqref{Def(v,eta)} on $K^{-1}(c)$, everywhere outside of $T_0^*\mathbb{C}$.
\end{rem}

In the following Theorem we show how the dynamics of the Hamiltonians $K_c$ as in \eqref{regHam} and $H_a-h$, with $H_a$ as in \eqref{DefHa} and $h$ as in \eqref{DefH}, are related to each other:

\begin{thm}\label{thm:Kepler}
Fix $q_0,q_1\in \R^2$ and denote $R:= \max\{|q_0|, |q_1|\}$. Then for every $c \geq \sqrt{2}R$, there exists a Hamiltonian $H_a$ as in \eqref{DefHa} with $a> 2 R^2$
and $h\in \mathcal{H}$ as in \eqref{DefH}, such that
$$
\Crit\A_{q_0,q_1}^{H_a-h}\subseteq \Crit \A_{q_0,q_1}^{K_c}.
$$
\end{thm}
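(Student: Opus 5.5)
The plan is to build the perturbation $h$ so that $H_a-h$ and $K_c$ coincide on an open neighbourhood $U$ of a compact region $\Sigma$ of $T^*\R^2$. We will then show that every critical point of $\A^{H_a-h}_{q_0,q_1}$ lies in $U$. Once this is done, the inclusion $\Crit\A_{q_0,q_1}^{H_a-h}\subseteq \Crit \A_{q_0,q_1}^{K_c}$ is immediate. Indeed, a critical point $(v,\eta)$ satisfies \eqref{Def(v,eta)} with $v([0,1])\subseteq (H_a-h)^{-1}(0)\cap U$. On $U$ the two Hamiltonians agree, so they have the same zero level and the same Hamiltonian vector field, and therefore $(v,\eta)$ also solves \eqref{Def(v,eta)} for $K_c$.

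\emph{Step 1: the region $\Sigma$.} Write $L:=p_1q_2-p_2q_1$, so that $|L|\le |q||p|$. The zero level of $K_c$ is
$$\tfrac12|p|^2-4c|q|^2+2|q|^2L=4 .$$
For $c$ large compared to $R$ this splits into a bounded component, sitting over a small disc in $q$, and unbounded pieces. I would estimate $|p|$ and $|q|$ on this level set using $|L|\le|q||p|$, and take $\Sigma$ to be a compact set containing the bounded component together with all of $K_c^{-1}(0)$ over $\{|q|\le R'\}$, for a suitable radius $R'$ depending on $R$. The hypothesis $c\ge\sqrt2 R$ should be exactly what forces the fibres over $q_0,q_1$ (with $|q_i|\le R$) to meet only the part of the level set we control.

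\emph{Step 2: the perturbation.} Note that
$$H_a-K_c=(a-2|q|^2)L+4c|q|^2+4 .$$
Since this is affine-linear in $p$, the Euler term satisfies $dh(p\partial_p)=(a-2|q|^2)L$ wherever $h=H_a-K_c$. Hence
$$h-dh(p\partial_p)+\tfrac12|p|^2=4c|q|^2+4+\tfrac12|p|^2>0$$
there, so condition \eqref{DefH} holds automatically on the region where $h=H_a-K_c$. I would set
$$h=\chi\,(H_a-K_c)+(1-\chi)\,b,$$
where $b>0$ is a constant and $\chi$ is a cutoff that equals $1$ near $\Sigma$ and is compactly supported in $(q,p)$. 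This makes $dh$ compactly supported, and \eqref{DefH} holds trivially where $h=b$. Choosing $a>2R^2$ keeps the coefficient $a-2|q|^2$ positive on the relevant region. This makes $H_a$ dominate and lets us control the sign of $H_a-h$ in the transition region.

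\emph{Step 3 (main obstacle).} It remains to check two things on the interpolation region where $0<\chi<1$, and on the region where $h=b$. First, \eqref{DefH} must still hold there, even though the term $d\chi(p\partial_p)\,(H_a-K_c-b)$ appears. Second, $(H_a-h)^{-1}(0)$ must not acquire new pieces there that could support chords between $T^*_{q_0}\R^2$ and $T^*_{q_1}\R^2$; alternatively, any such chord must be shown to be trapped in $U$. For the first point, I would try a cutoff depending only on $|p|$ and $|q|$ through radial functions with small logarithmic derivative in $|p|$, so that the Euler term is dominated by $\tfrac12|p|^2$. For the second point, I would show that $H_a-h$ is strictly positive on the transition region. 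This uses $a>2R^2$, large $b$ compared with the values of $H_a-K_c$ there, and the fact that $\tfrac12|p|^2+aL>0$ once $|p|>2a|q|$. With positivity in hand, the zero level of $H_a-h$ would be contained in $U$, where it coincides with the bounded part of $K_c^{-1}(0)$, and the inclusion of critical sets follows as described above.
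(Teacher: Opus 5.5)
\textbf{Gap in Step 3.} You plan to make $H_a-h$ strictly positive on the transition region, so that $(H_a-h)^{-1}(0)$ lies in the compact set $U$. This cannot happen for any $h\in\mathcal H$.

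Evaluating \eqref{DefH} at $p=0$ gives $h(q,0)>0$, so $(H_a-h)(q,0)<0$. Also $H_a-h\to+\infty$ along every ray in $T_q^*\R^2$, and \eqref{d(H_a-h)} makes the zero on each ray transverse and unique. So, as in Lemma~\ref{lem:nonEmpty}, $(H_a-h)^{-1}(0)$ meets \emph{every} fibre $T_q^*\R^2$ in exactly one circle; it is connected and projects onto all of $\R^2$.

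You can see this directly in your construction. Outside the compact projection of $\supp\chi$ the zero level is $H_a^{-1}(b)$, which is nonempty over every such $q$. Over $q_0$ it is the circle $K_c^{-1}(0)\cap T^*_{q_0}\R^2\subseteq\{\chi=1\}$. A connected set meeting both $\{\chi=1\}$ and $\{\chi=0\}$ must meet $\{0<\chi<1\}$, so $H_a-h$ necessarily vanishes in the transition region.

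The premise of Step 1 fails for the same reason. Completing the square,
\[
K_c=\tfrac12\bigl|p+2|q|^2(q_2,-q_1)\bigr|^2-2|q|^6-4c|q|^2-4,
\]
so for $c>0$ the level $K_c^{-1}(0)$ is a circle bundle over the whole plane, with no bounded component to isolate. Confinement of chords to the region where $H_a-h=K_c$ therefore cannot come from the shape of the hypersurface. It has to be dynamical, and your proposal gives no such mechanism. Your reading of $c\ge\sqrt2R$, as restricting which part of the level set the fibres over $q_0,q_1$ meet, is also not how that hypothesis works.

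\textbf{How the paper traps the chords.} The paper's perturbation $h=\varphi(a-2r^2)(p_\theta-2c)+2ac+4$ is exactly your interpolation with $b=2ac+4$. It uses the rotation-invariant cutoff $\varphi=\chi_0(|q|)\chi_1(H_a)$, where $\chi_0$ switches off between $R_1=R$ and $R_2$, $a=2R_2^2$, and the width $d$ of $\chi_1$ is taken large to obtain \eqref{DefH}. Three facts then confine the chords.
\begin{enumerate}
\item $(H_a-h)^{-1}(0)\subseteq H_a^{-1}((-\infty,b])$ (Lemma~\ref{lem:inclusion}). Hence $\varphi=\chi_0$ on the zero level, and $H_a-h=K_c$ over $B(R)$.
\item Rotation invariance makes $p_\theta$ conserved along chords. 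At the endpoints, $v(0),v(1)\in K_c^{-1}(0)\cap\{|q|\le R\}$, and Lemma~\ref{lem:pTheta} gives $p_\theta\le 2c$. This is where $c\ge\sqrt2R$ enters.
\item On $(H_a-h)^{-1}(0)\cap\{p_r=0\}\cap\{p_\theta\le 2c\}$ one has
\[
\{H_a-h,p_r\}=\frac{p_\theta^2}{r^3}+4r\varphi(2c-p_\theta)-(a-2r^2)(2c-p_\theta)\,d\varphi(\partial_r)>0
\]
(Lemma~\ref{lem:empty}). So $r\circ v$ has no interior maximum above $R$, and every chord stays in $T^*B(R)$, where $H_a-h=K_c$.
\end{enumerate}

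Your Steps 1--2 are compatible with this, and a cutoff depending only on $|q|$ and $|p|$ would also be rotation invariant. What is missing is the maximum-principle argument for $r$ combined with the angular-momentum bound. Without it, nothing prevents a chord from leaving the agreement region through the part of the zero level that necessarily lies in the transition shell.
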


We will now show how Theorem \ref{thm:2Boost} is proven by combining the results from Theorem \ref{thm:main}, Theorem \ref{thm:posLRFH}, Remark \ref{rem:DoubleCover} and Theorem \ref{thm:Kepler}. Let $q_0,q_1\in \mathbb{C}\setminus \{0\}$ be two points and let $c \geq \sqrt{2} \max\{|q_0|, |q_1| \}$. 
On one hand, by Remark \ref{rem:DoubleCover} we know that every element of $\Crit \A_{\pm q_0,\pm q_1}^{K_c}$ can be projected onto a solution in $\Crit \A_{q_0^2,q_1^2}^{K-c}$ (or a collision trajectory). Since $K_c^{-1}(0)\setminus T^*_0\mathbb{C}$ is a double cover of $K^{-1}(c)$, we know in fact that for every solution $(v_1,\eta_1)\in \Crit \A_{q_0,q_1}^{K_c}$ there exists a solution $(v_2, \eta_2)\in \Crit \A_{-q_0,-q_1}^{K_c}$, such that $(v_1,\eta_1)$ and $(v_2,\eta_2)$ correspond to a single solution in $\Crit \A_{q_0^2,q_1^2}^{K-c}$, possibly a collision trajectory. The same reasoning applies to the solutions from $\Crit \A_{-q_0,q_1}^{K_c}$ and $\Crit \A_{q_0,-q_1}^{K_c}$. 

On the other hand, by Theorem \ref{thm:posLRFH} and Theorem \ref{thm:Kepler} we have:
$$
\Crit \A_{q_0,q_1}^{K_c} \neq \emptyset, \quad \Crit \A_{-q_0,q_1}^{K_c} \neq \emptyset, \quad \Crit \A_{q_0,-q_1}^{K_c}\neq \emptyset \quad \textrm{and}\quad \Crit \A_{-q_0,-q_1}^{K_c}\neq \emptyset.
$$ 
Therefore the cardinality of $\Crit \A_{q_0^2,q_1^2}^{K-c}$ together with the set of collision trajectories from $q_0^2$ to $q_1^2$ is equal to the sum of $\# \Crit \A_{q_0,q_1}^{K_c}+\# \Crit \A_{-q_0,q_1}^{K_c}\geq 2$, which concludes the proof of Theorem \ref{thm:2Boost}.






\subsection*{Structure of the paper}

The paper is organized as follows:
In subsections \ref{ssec:Rab}, \ref{ssec:Floer} and \ref{ssec:LRFH} we recall the setting and the definition of the Lagrangian Rabinowitz Floer homology. In subsection \ref{ssec:LRFH} we cite \cite[Thm. 2.5]{Wisniewska2024}, which states the conditions under which the Lagrangian Rabinowitz homology is well defined for non-compact hypersurfaces. The proof that Hamiltonians of the form $H_a-h$ with $H_a$ is as in \eqref{DefHa} and $h\in \mathcal{H}$ as in \eqref{DefH} satisfy conditions required in \cite[Thm. 2.5]{Wisniewska2024} occupies most of Section \ref{sec:Bounds}. The most challenging of those conditions is the proof of the boundedness of Floer trajectories. It relies on the methodology presented in the proof of \cite[Thm. 3.3]{Wisniewska2024}. In Section \ref{sec:posLRFH} we show that the positive Lagrangian Rabinowitz Floer homology of Hamiltonians $H_a-h$ is also well defined and we relate it to the homology calculated in \cite[Thm. 1.2]{Wisniewska2024}, which together proves Theorem \ref{thm:posLRFH}.

In subsection \ref{ssec:reg} we explain how the dynamics of the regularized Hamiltonian $K_c$ as in \eqref{regHam} describes the dynamics of the rotating Kepler problem induced by $K$ as in \eqref{Kepler}. We also discuss the collision trajectories in the rotating Kepler problem. In Section \ref{sec:RegHam} we show how we can construct a Hamiltonian of the form $H_a-h$, which dynamics on a compact subset of $(H_a-h)^{-1}(0)$ would correspond to the dynamics on $K_c^{-1}(0)$. Finally, in Section \ref{sec:RegHam} we prove Theorem \ref{thm:Kepler}.

\subsection*{Acknowledgements}

I would like to thank prof. Urs Frauenfelder for all the time devoted to mentoring me and all the fruitful discussions we had on the subject. I would also like to express my gratitude to Hanna H\"{a}u\ss ler, Simone Zahn and Milan Zerbin for all the ice-cream breaks and after-work hangouts without which the creation of this article would not have been possible. Last, but not least I would like to thank Jennifer Gruber whose professionalism at providing administrative support at Augsburg University made it possible for me to focus on my research.

My postdoctoral position at University of Augsburg is funded by the Deutsche
Forschungsgemeinschaft (DFG, German Research Foundation) via the grant “Himmelsmechanik, Hydrodynamik und Turing-Maschinen” - 541525489.

\section{Setting}

\subsection{Rabinowitz action functional}
\label{ssec:Rab}

Throughout this section, $Q$ denotes a smooth oriented $n$-dimensional manifold, and $T^*Q$ its cotangent bundle equipped with the exact symplectic form $\omega=d\lambda$ for the canonical $1$-form $\lambda:=p\,dq$. 
Fix $q_0,q_1 \in Q$ and define the space of paths
$$
\mathscr{H}_{q_0,q_1}:= \left\lbrace v \in W^{1,2}([0,1], T^*Q)\ \big|\ v(i)\in T^*_{q_i}Q\quad \textrm{for}\quad i=0,1\right\rbrace.
$$
Consider a Hamiltonian $H: T^*Q \to \R$ with regular level set $H^{-1}(0)$. The following action functional has been first introduced by Rabinowitz in \cite{Rabinowitz1978} in order to prove the Weinstein conjecture for star-shaped hypersurfaces. However, in the original setting the domain of the functional was the loop space, whereas here we define the functional $\A^H_{q_0,q_1}:\mathscr{H}_{q_0,q_1}\times \R\to\R$ associated to $H$ as
$$
\A^H_{q_0,q_1}(v,\eta):= \int_0^1 \lambda(\partial_t v)dt - \eta \int_0^1 H(v(t))dt.
$$
We call it the {\em Rabinowitz action functional}.

The derivative of $\A^H_{q_0,q_1}$ in direction $(\xi, \sigma) \in T_v \mathscr{H}_{q_0,q_1}\times T_\eta \R$ equals
\begin{equation}\label{eq:dAH}
d\A^H_{q_0,q_1}(v,\eta)[\xi,\sigma] = \int_0^1 \omega (\xi,\partial_tv -\eta X_H) - \sigma\int_0^1 H(v)dt.
\end{equation}
Consequently, $(v,\eta) \in \Crit (\A^H_{q_0,q_1})$ if and only if it satisfies \eqref{Def(v,eta)}.
Thus we can have three types of critical points: 
\begin{itemize}
\item $\eta>0$ and $\wt v(t):=v(t/\eta)$ is a Hamiltonian chord (i.e., an integral curve of $X_H$) on $H^{-1}(0)$ from $T^*_{q_0}Q$ to $T_{q_1}^*Q$;
\item $\eta<0$ and $\wt v(t):=v(t/\eta)$ is a Hamiltonian chord on $H^{-1}(0)$ from $T^*_{q_1}Q$ to $T_{q_0}^*Q$; 
\item $\eta=0$ and $v$ is a constant path in $T^*_{q_0}Q\cap T_{q_1}^*Q\cap H^{-1}(0)$ (which can only occur if $q_0=q_1$). 
\end{itemize}
In particular, if $T^*_{q_0}Q \cap H^{-1}(0)=\emptyset$ or $T_{q_1}^*Q \cap H^{-1}(0)=\emptyset$, then $ \Crit (\A^H_{q_0,q_1})=\emptyset$. Therefore, from now on we will assume that 
\begin{equation}\label{T*QcapHnonempty}
T^*_{q_0}Q \cap H^{-1}(0)\neq\emptyset,\quad T^*_{q_1}Q \cap H^{-1}(0)\neq\emptyset,\quad\textrm{and } T^*_{q_0}Q\pitchfork H^{-1}(0) \text{ if }q_0=q_1.
\end{equation}

In \cite{Merry2014} Merry defined the Lagrangian Rabinowitz Floer homology as an algebraic invariant of a pair of Lagrangians $L_0,L_1$ and a compact hypersurface $\Sigma$ of exact contact type in an exact symplectic manifold $(M, \omega)$.
The regular level set $H^{-1}(0)$ is said to be of {\em exact contact type} if there exists a Liouville vector field $Y$ on $T^*Q$ (satisfying $L_Y\omega=\omega$) such that $dH(Y)>0$ along $H^{-1}(0)$. 

Note that for every Liouville vector field $Y$ setting $(\xi,\sigma)=(Y,0)$ in equation~\eqref{eq:dAH} we obtain the following relation:
\begin{equation}\label{dA^H(Y)}
  \A^H (v,\eta) - d\A^H(v,\eta)[Y, 0] = \eta\int_0^1 \left(dH(Y)-H\right)(v(t))dt.
\end{equation}

In order to construct Lagrangian Rabinowitz Floer homology for a non-compact hypersurface we want the critical set of the action functional to be bounded in $L^\infty$. In fact, to construct Lagrangian Rabinowitz Floer homology we will need the associated Rabinowitz action functional to have its critical set \emph{continuously compact}. The following definition has bee introduced in \cite[Def. 2.1]{Wisniewska2024}:
\begin{define}\label{def:contComp}
Consider a Hamiltonian $H: T^*Q \to \R$, such that $H^{-1}(0)$ is noncompact. Fix $q_0,q_1\in T^*Q$ satisfying \eqref{T*QcapHnonempty}. Let $K\subseteq T^*Q$ be a compact set and let 
$$
\mathcal{H}\subseteq \{ h\in C^\infty(T^*Q)\ |\ dh\in C_0^\infty(K)\},
$$
be an open neighbourhood of $0$ in $\{ h\in C^\infty(T^*Q)\ |\ dh\in C_0^\infty(K)\}$.
We say that the associated Rabinowitz action functional $\A^H_{q_0,q_1}:\mathscr{H}_{q_0,q_1}\times \R\to \R$ has critical set \emph{continuously compact} in $(K,\mathcal{H})$ if
$$
\forall\ h\in\mathcal{H}\quad \textrm{and}\quad\forall\ (v,\eta) \in \Crit \A^{H+h}_{q_0,q_1} \quad \textrm{we have}\quad v([0,1])\subseteq K.
$$
\end{define}
This property assures the boundedness of the critical set of the action functional to persist under compact perturbations of the Hamiltonian. This, in turn, is necessary to be able to perturb the Hamiltonian and assure that the associated Rabinowitz action functional is Morse.

\subsection{Floer trajectories}
\label{ssec:Floer}

In this subsection we introduce the fundamental notion for constructing any Floer-type homology - the Floer trajectories. We start by equipping the space $\mathscr{H}_{q_0,q_1}$ with a metric. An \emph{almost complex structure} $J$ on a manifold $M$ is a bundle endomorphism $J: TM \to TM$ satisfying $J^2 = -\operatorname{Id}$. An almost complex structure $J$ on a symplectic manifold $(M, \omega)$ is called \emph{compatible} if $\omega(\ \cdot\ ,J \cdot\ )$ defines a Riemannian metric on $M$. Denote by $\mathcal{J}(M,\omega)$ the space of all compatible almost complex structures on $(M,\omega)$ with the $C^\infty$-topology.
A linear algebra argument \cite[Prop. 13.1]{Silva2001} shows that $\mathcal{J}(M,\omega)$ is contractible.

A smooth $2$-parameter family $\{J_{t,\eta}\}_{(t,\eta)\in[0,1]\times \R}$ of compatible almost complex structures on $(T^*Q, \omega)$ defines an $L^2$-metric on $\mathscr{H}_{q_0,q_1}\times \R$ by
$$
\langle (\xi_1, \sigma_1), (\xi_2, \sigma_2)\rangle := \int_0^1 \omega(\xi_1(t),J_{t,\eta}(v(t))\xi_2(t) )+\sigma_1\sigma_2.
$$
for $(\xi_i,\sigma_i)\in T_{(v,\eta)}(\mathscr{H}_{q_0,q_1}\times \R)$.
The gradient of the Lagrangian Rabinowitz action functional $\A^H_{q_0,q_1}$ with respect to this metric equals
$$
\nabla \A^H_{q_0,q_1} (v,\eta)= \left( \begin{array}{c}
-J_{t,\eta}(v(t)) (\partial_tv -\eta X_H) \\ - \int H(v)dt.
\end{array}
\right)
$$
Fix an open subset $\mathcal{V}\subseteq T^*Q$ and $\mathbb{J}\in \mathcal{J}(T^*Q,\omega)$.
We denote by $\mathcal{J}(\mathcal{V}, \mathbb{J})$ the set of all smooth maps
$$
[0,1]\times \mathbb{R}\to \mathcal{J}(T^*Q,\omega),\qquad (t,\eta)\mapsto J_{t,\eta}
$$
satisfying
\begin{align*}
J_{t,\eta}(x) = \mathbb{J}(x)\text{ for } x\notin \mathcal{V}
\quad\textrm{and}\quad \sup_{(t,\eta)\in [0,1]\times \mathbb{R}}\|J_{t,\eta}\|_{C^k}<+\infty \text{ for all } k\in \mathbb{N}.
\end{align*}
A solution $u: \R \to \mathscr{H}_{q_0,q_1} \times \R$ to the gradient flow equation $\partial_s u = \nabla \A^{H_s}_{q_0,q_1}(u)$ is called a \emph{Floer trajectory}. In other words, a Floer trajectory $u=(v,\eta) \in W^{1,2}(\R\times[0,1], T^*Q)\times W^{1,2}(\R)$ is a solution to the equations
\begin{equation}\label{FloerEq}
\begin{aligned}
\partial_s v(s,t) & = - J_{s, t,\eta}(v(s,t)) (\partial_sv(s,t)-\eta(s)X_{H_s}(v(s,t))),\\
\partial_s \eta (s) & = - \int_0^1 H_s\circ v(s,t)dt,
\end{aligned}
\end{equation}
$$
v(s,0) \in T^*_{q_0}Q \quad \textrm{and} \quad v(s,1) \in T^*_{q_1}Q \quad \forall\ s\in \R.
$$

For $(x_-, x_+) \in \Crit \A^{H_-}_{q_0,q_1}\times \Crit \A^{H_+}_{q_0,q_1}$ we denote the space of Floer trajectories from $x_-$ to $x_+$ by
$$
\F_\Gamma(x_-, x_+):=
\left\lbrace\begin{array}{c|c}
& \partial_s u = \nabla \A^{H_s}_{q_0,q_1}(u),\\
\hspace*{-0.2cm}\smash{\raisebox{.5\normalbaselineskip}{ $u: \R \to \mathscr{H}_{q_0,q_1} \times \R$}}& \lim_{s\to \pm \infty}u(s)=x_\pm.
\end{array}\right\rbrace
$$
In case the homotopy $\Gamma$ is constant in $s$, i.e. $H_s\equiv H$ and $J_{s,t,\eta}\equiv J \in \mathcal{J}(\mathcal{V},\mathbb{J})$ for some Hamiltonian $H$ and a compatible almost complex structure $J$, we denote $\F_{H,J}(x_-,x_+):=\F_\Gamma(x_-, x_+)$.

Moreover, for every pair $(\mathbf{a},\mathbf{b})\in \R^2$ we denote 
\begin{equation}\label{DefM(a,b)}
\mathcal{M}^\Gamma(\mathbf{a},\mathbf{b}):=\left\lbrace\begin{array}{c|c}
& (x_-,x_+)\in \Crit\A^{H_-}_{q_0,q_1}\times \Crit\A^{H_+}_{q_0,q_1},\\
\hspace*{-0.2cm}\smash{\raisebox{.5\normalbaselineskip}{ $u\in \F_\Gamma(x_-,x_+)$}}& \A^{H_-}_{q_0,q_1}(x_-)\geq \mathbf{a}, \quad \A^{H_+}_{q_0,q_1}(x_+)\leq \mathbf{b}.
\end{array}\right\rbrace
\end{equation}
Analogously, we denote $\mathcal{M}^{H,J}(\mathbf{a},\mathbf{b}):=\mathcal{M}^\Gamma(\mathbf{a},\mathbf{b})$ whenever the homotopy $\Gamma$ is constant in $s$ and equal to the pair $(H,J)$.

If a homotopy $\Gamma$ is constant in $s$, then the action increases along Floer trajectories. However, for a nonconstant homotopy $\Gamma$, this need not be the case. To deal with this phenomenon, we introduce a condition that ensures that the action cannot decrease indefinitely along a Floer trajectory.
We say that a homotopy $\Gamma:= \{(H_s, J_s)\}_{s\in \R}$ with $H_{\pm}:=\lim_{s\to \pm \infty}H_s$ satisfies the \emph{Novikov finiteness condition} if for all $a,b\in \R$ we have
\begin{equation}\label{Novikov}
\begin{aligned}
\inf\left\lbrace\begin{array}{c|c}
& y \in  \Crit\A^{H_+}_{q_0,q_1}, \quad \exists\ x\in \Crit  \A^{H_-}_{q_0,q_1},\\
\hspace*{-0.2cm}\smash{\raisebox{.5\normalbaselineskip}{ $\A^{H_+}_{q_0,q_1}(y)$}}& \A^{H_-}_{q_0,q_1}(x)\geq a, \quad \F_\Gamma(x,y)\neq \emptyset.
\end{array}\right\rbrace & > -\infty,\\
\sup\left\lbrace\begin{array}{c|c}
& x \in  \Crit\A^{H_-}_{q_0,q_1}, \quad \exists\ y\in \Crit  \A^{H_+}_{q_0,q_1},\\
\hspace*{-0.2cm}\smash{\raisebox{.5\normalbaselineskip}{ $\A^{H_-}_{q_0,q_1}(x)$}}& \A^{H_+}_{q_0,q_1}(y)\leq b, \quad \F_\Gamma(x,y)\neq \emptyset.
\end{array}\right\rbrace &< +\infty.
\end{aligned}
\end{equation}

\subsection{Lagrangian Rabinowitz Floer homology}
\label{ssec:LRFH}

The Lagrangian Rabinowitz Floer homology as an algebraic invariant of a pair of Lagrangians $L_0,L_1$ and a compact hypersurface $\Sigma$ of exact contact type in an exact symplectic manifold $(M, \omega)$ was defined by Merry in \cite{Merry2014}. In \cite{Wisniewska2024} Cieliebak, Frauenfelder, Miranda and the author of this paper extended the definition of Lagrangian Rabinowitz Floer homology to include non-compact hypersurfaces. In particular, we have proven the following theorem \cite[Thm. 2.5]{Wisniewska2024} that states the conditions sufficient to define the Lagrangian Rabinowitz Floer homology in the non-compact setting: 

\begin{thm}\label{thm:DefLRFH}
Consider a cotangent bundle $(T^*Q,\omega)$ with its standard symplectic form and a Hamiltonian $H:T^*Q\to \R$ with regular level set $H^{-1}(0)$.
Fix a pair $q_0,q_1\in Q$ such that both sets $H^{-1}(0)\cap T^*_{q_0}Q$ and $H^{-1}(0)\cap T^*_{q_1}Q$ are compact and nonempty. Fix a compatible almost complex structure $\mathbb{J}\in \mathcal{J}(T^*Q,\omega)$.
Assume that there exists
a compact subset $K$, an open subset $\mathcal{V}$ satisfying $K\subseteq \mathcal{V}\subseteq T^*Q$, and an open neighbourhood $\mathcal{H}\subseteq \{h\in C^\infty(T^*Q)\ |\ dh\in C_0^\infty(K)\}$ of $0$
such that:
\begin{enumerate}
\item for all $h\in \mathcal{H}$ the Hamiltonian $H+h$ satisfies \eqref{T*QcapHnonempty};
\item the Rabinowitz action functional $\A^H_{q_0,q_1}:\mathscr{H}_{q_0,q_1}\times\R \to \R$ has critical set continuously compact in $(K,\mathcal{H})$;
\item for all $h_0,h_1 \in \mathcal{H}$ and $J_0, J_1 \in \mathcal{J}(\mathcal{V}, \mathbb{J})$ every homotopy $\Gamma=\{(H+h_s, J_s)\}_{s\in \R}$ 
from $(H+h_0,J_0)$ to $(H+h_1,J_1)$ satisfies the Novikov finiteness condition \eqref{Novikov}
and for all $\mathbf{a, b} \in \R$ the space of Floer trajectories $\mathcal{M}^\Gamma(\mathbf{a, b})$ is bounded in the $L^\infty$-norm.
\end{enumerate}
Then for every $h\in\mathcal{H}$ the Lagrangian Rabinowitz Floer homology $\LRFH_*(\A^{H+h}_{q_0,q_1})$ is well defined and isomorphic to $\LRFH_*(\A^{H}_{q_0,q_1})$.
\end{thm}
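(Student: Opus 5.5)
The plan is to follow Floer's construction as adapted by Cieliebak--Frauenfelder and Merry, and to use assumptions (1)--(3) exactly where compactness of $H^{-1}(0)$ is normally used.

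\textbf{Step 1 (Morse perturbation).} Assumption (2) shows that for every $h\in\mathcal{H}$ the set $\Crit\A^{H+h}_{q_0,q_1}$ consists of chords with image in the compact set $K$. Because of the Lagrange multiplier equation, a critical point with $\eta$ large must have large action. Hence for each action window $(\mathbf{a},\mathbf{b})$ only chords with bounded $\eta$ occur, and by Arzel\`a--Ascoli the critical set in that window is compact. A standard Sard--Smale argument then applies, using perturbations $h$ with $dh$ supported in $K$, which assumption (1) keeps admissible. It produces $h$ arbitrarily close to $0$, still in the open set $\mathcal{H}$, such that $\A^{H+h}_{q_0,q_1}$ is Morse. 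Since $H^{-1}(0)\cap T^*_{q_i}Q$ is compact, the Morse condition is a transversality condition on a compact set. The grading is the Maslov-type index of Merry, normalized so that it takes values in $\frac12\mathbb{Z}$.

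\textbf{Step 2 (compactness of moduli spaces).} Fix $J\in\mathcal{J}(\mathcal{V},\mathbb{J})$. By assumption (3), for the constant homotopy the space $\mathcal{M}^{H+h,J}(\mathbf{a},\mathbf{b})$ is bounded in $L^\infty$, including the $\eta$-component. Energy is bounded by $\mathbf{b}-\mathbf{a}$. There is no bubbling:
\begin{itemize}
\item $\omega=d\lambda$ is exact, so no sphere bubbles occur;
\item $\lambda$ vanishes on the cotangent fibres, so no disc bubbles on $T^*_{q_i}Q$ occur.
\end{itemize}
Standard elliptic bootstrapping therefore gives $C^\infty_{loc}$ compactness up to breaking. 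The uniform $C^k$ bounds on $J_{t,\eta}$ built into $\mathcal{J}(\mathcal{V},\mathbb{J})$ make this bootstrapping uniform.

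\textbf{Step 3 (transversality).} All trajectories with bounded action stay in a fixed compact set. I would first try to arrange, possibly after enlarging $K$ inside $\mathcal{V}$, that this set lies in $\mathcal{V}$. Then generic perturbations of $J$ supported in $\mathcal{V}$ achieve surjectivity of the linearized operators, which is the usual Floer--Hofer--Salamon argument with Lagrangian boundary conditions. One then defines the $\mathbb{Z}_2$ filtered complexes $CF^{(\mathbf{a},\mathbf{b})}$ by counting rigid trajectories and checks $\partial^2=0$ via gluing and the compactness of 1-dimensional moduli spaces. Finally set
$$\LRFH_*=\varinjlim_{\mathbf{b}\to\infty}\varprojlim_{\mathbf{a}\to-\infty}HF_*^{(\mathbf{a},\mathbf{b})}.$$

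\textbf{Step 4 (continuation and independence).} For $h_0,h_1\in\mathcal{H}$ I would use $h_s$ along a path inside $\mathcal{H}$. This is harmless when $\mathcal{H}$ is convex; in general, use convexity of a small ball around $0$ and compose maps. For each such path, assumption (3) gives two things:
\begin{itemize}
\item the Novikov finiteness condition, which makes the continuation map $\Phi_\Gamma$ a finite sum and compatible with the action filtration up to a shift, so it passes to the inverse/direct limits;
\item $L^\infty$ bounds, which give compactness of the parametrized moduli spaces as in Step 2.
\end{itemize}
Homotopies of homotopies yield chain homotopies in the usual way. Concatenation gives $\Phi_{\Gamma'}\circ\Phi_\Gamma\simeq\Phi_{\Gamma'\#\Gamma}$, and the constant homotopy gives the identity. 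Hence the maps are isomorphisms, which also proves independence of $J$ and of the Morse perturbation. This yields $\LRFH_*(\A^{H+h})\cong\LRFH_*(\A^{H})$, where the latter is defined through any Morse perturbation close to $0$.

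\textbf{Main obstacle.} I expect the main obstacle to be the interplay of action filtration and limits under the $s$-dependent homotopy. The action can decrease along $s$-dependent trajectories, so the action shift must be controlled uniformly. Here the Novikov condition is exactly what must be fed in: I would verify that it gives, for each window, a finite window on the other side, so that $\Phi_\Gamma$ induces maps of the bidirected systems.
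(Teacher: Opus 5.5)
The paper gives no proof of this statement; it is quoted from \cite[Thm.~2.5]{Wisniewska2024}. Your outline follows the standard Cieliebak--Frauenfelder/Merry scheme, and the compactness ingredients are the right ones: hypothesis~3 for $L^\infty$- and $\eta$-bounds, no bubbling because $\omega=d\lambda$ and $\lambda$ vanishes on the fibres, and Novikov finiteness for the continuation maps. However, Step~1 fails in a case the statement covers, and two of your justifications need to be replaced.

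\textbf{The case $q_0=q_1$.} The statement allows $q_0=q_1$. That is the purpose of the clause ``$T^*_{q_0}Q\pitchfork H^{-1}(0)$ if $q_0=q_1$'' in \eqref{T*QcapHnonempty}, imposed through hypothesis~1, and your outline never uses it. For every $h\in\mathcal{H}$, the pairs $(v,0)$ with $v\equiv x$ constant and $x\in T^*_{q_0}Q\cap(H+h)^{-1}(0)$ are critical points of $\A^{H+h}_{q_0,q_0}$. By the transversality clause they form a submanifold of dimension $n-1$ (a circle in the situation of Lemma~\ref{lem:nonEmpty}). No choice of $h$ removes this family, so for $n\geq 2$ the functional is never Morse, and your Sard--Smale step cannot produce a Morse perturbation. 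What the clause does give is Morse--Bott nondegeneracy. Let $(\xi,\sigma)$ be in the kernel of the Hessian at $(x,0)$. Then $\xi(t)=\xi_0+t\sigma X_{H+h}(x)$ with $\xi(0),\xi(1)\in T_x(T^*_{q_0}Q)$. Since $d(H+h)(x)$ does not vanish on the Lagrangian $T_x(T^*_{q_0}Q)$, we get $X_{H+h}(x)\notin T_x(T^*_{q_0}Q)$, hence $\sigma=0$. The $\eta$-component of the linearized gradient then forces $\xi_0\in T_x\big(T^*_{q_0}Q\cap(H+h)^{-1}(0)\big)$. So for $q_0=q_1$ you need a Morse--Bott version of the complex, for instance cascades with an auxiliary Morse function on this component. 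The generic perturbation of $h$ should then be used only to make the chords with $\eta\neq 0$ nondegenerate.

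\textbf{Two repairs.} First, in Step~1 your claim that large $\eta$ forces large action needs a contact condition. By \eqref{dA^H(Y)}, a critical point satisfies $\A^{H+h}_{q_0,q_1}(v,\eta)=\eta\int_0^1 d(H+h)(p\partial_p)(v)\,dt$, and the theorem assumes nothing about the sign of $d(H+h)(p\partial_p)$. The bound comes from hypothesis~3 instead. A critical point $x$ with $\mathbf{a}\le\A(x)\le\mathbf{b}$ is a stationary trajectory in $\mathcal{M}^{H+h,\mathbb{J}}(\mathbf{a},\mathbf{b})$, and the $L^\infty\times\R$-bound on that space bounds $\eta$. Together with hypothesis~2 and the ODE, this gives compactness of the critical set in each action window.

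Second, in Step~3 you cannot in general arrange that all trajectories lie in $\mathcal{V}$. The set $\mathcal{V}$ is fixed by the hypotheses, and the $L^\infty$-bound from hypothesis~3 may exceed it. This is also unnecessary. Every non-stationary trajectory is asymptotic to critical points in $K\subseteq\mathcal{V}$, so it maps an open subset of its domain into $\mathcal{V}$. That is where the $(t,\eta)$-dependent perturbations allowed in $\mathcal{J}(\mathcal{V},\mathbb{J})$ act, and it is what the transversality argument uses. Outside $\mathcal{V}$, the fixed $\mathbb{J}$ and the uniform $C^k$-bounds give the elliptic estimates.

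With these changes the rest goes through. Each action window then has finitely many generators, so your $\varinjlim_{\mathbf{b}}\varprojlim_{\mathbf{a}}$ of windowed homologies agrees with the homology of the usual downward-completed complex.
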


In the next section we will verify that Hamiltonians of the form $H_a-h$ on $(T^*\R^2, \omega_0)$ with $H_a$ as in \eqref{DefHa} and $h\in \mathcal{H}$ as in \eqref{DefH} satisfy the assumptions of Theorem \ref{thm:DefLRFH}:
\begin{itemize}
\item in Lemma \ref{lem:nonEmpty} we prove that for all for all $h\in \mathcal{H}$ the Hamiltonian $H-h$ satisfies \eqref{T*QcapHnonempty};
\item in Lemma \ref{lem:Chord} we prove that there exists a sequence of sets $\{(K_n, \mathcal{H}_n)\}_{n\in \mathbb{N}}$, such that $K_n$ are compact, for all $h\in \mathcal{H}_n$ we have $\supp dh \subseteq K_n$, $\bigcup_{n\in \mathbb{N}} K_n=T^*\R^2$, $\bigcup_{n\in \mathbb{N}} \mathcal{H}_n=\mathcal{H}$ and for every $n\in \mathbb{N}$ the Rabinowitz action functional $\A^{H_a}_{q_0,q_1}:\mathscr{H}_{q_0,q_1}\times\R \to \R$ has critical set continuously compact in $(K_n,\mathcal{H}_n)$;
\item in Theorem \ref{thm:FloerBounds} we prove the boundedness of the space of Floer trajectories $\mathcal{M}^\Gamma(\mathbf{a, b})$ for all $\mathbf{a, b} \in \R$;
\item in Lemma \ref{lem:Novikov} we prove the Novikov finiteness condition.
\end{itemize}
After verifying that Hamiltonians of the form $H_a-h$ satisfy the assumptions of Theorem \ref{thm:DefLRFH}, we can apply it to conclude that Theorem \ref{thm:main} holds true.

\subsection{Levi-Civita regularization}\label{ssec:Levi-Civita}
\label{ssec:reg}
In this subsection we will discuss a method presented in \cite{Levi1920} by Levi-Civita, which shows how to deal with the singularity of the Kepler problem and the corresponding non-compactness of the energy level sets. Recall that the Hamiltonian $K$ of the rotating Kepler problem introduced at \eqref{Kepler} is not defined on the whole $T^*\R^2$, but has a singularity at $0$. The singularity models the possibility of the collision of the spacecraft with the planet. In order to remove the singularity Levi-Civita proposed the following procedure, known as \emph{Levi-Civita regularization}:

First identify $\R^2 \simeq \mathbb{C}$. In complex coordinates the rotating Kepler Hamiltonian on $T^*(\mathbb{C}\setminus \{0\})$ becomes:
$$
K(z,w)=\frac{1}{2}|w|^2+\langle z, iw \rangle-\frac{1}{|z|},
$$
where $\langle\ \cdot\ ,\ \cdot\ \rangle$ is the standard Euclidean scalar product $\langle z , w \rangle=\Re (z \overline{w})$. Consider now the square map $\beta: \mathbb{C} \to \mathbb{C}$ and $\beta(z) = z^2$. Note that $\beta$ restricted to $\mathbb{C}\setminus \{0\}$ is a double-cover of $\mathbb{C}\setminus \{0\}$, which is a local diffeomorphism. The contragradient of the squaring map $\mathfrak{B}: T^*\mathbb{C} \to T^*\mathbb{C}$ is given by
$$
\mathfrak{B}(z,w)=\left(\beta(z), \frac{w}{\overline{\beta'(z)}} \right)=\left(z^2, \frac{w}{2\overline{z}} \right).
$$
For every $z\in \mathbb{C}\setminus\{0\}$ there exists a neighbourhood $\mathcal{U}\subseteq \mathbb{C}\setminus\{0\}$ small enough such that for every connected component of $\beta(\mathcal{U})= \mathcal{V}_+\sqcup V_-$, the sets $T^*\mathcal{V}_{\pm}$ is symplectomorphic with $T^*\mathcal{U}$ via $\mathfrak{B}$. In other words, $\mathfrak{B}$ restricted to $T^*(\mathbb{C}\setminus \{0\})$ is a double cover of $T^*(\mathbb{C}\setminus \{0\})$, which is a local symplectomorphism. 

Let us consider the Hamiltonian
$$
K \circ \mathfrak{B}(z,w)= \frac{|w|^2}{8|z|^2}+ \frac{1}{2}\langle z, iw \rangle-\frac{1}{|z|^2}.
$$
For every $c>0$ the hypersurface $(K \circ \mathfrak{B})^{-1}(c)$ is a double cover of the level set $K^{-1}(c)$. Using $\mathfrak{B}$ we can pull back the Hamiltonian vector field of $K$ to obtain the Hamiltonian vector field of $K \circ \mathfrak{B}$ i.e.
$$
X_{K \circ \mathfrak{B}}=D\mathfrak{B}(X_K).
$$
Moreover, since $\mathfrak{B}$ is a contragradient, for every $z\in \mathbb{C}$ we have $\mathfrak{B}(T^*\mathbb{C})=T^*_{\beta(z)}\mathbb{C}$. Consequently, if we fix $z_0,z_1\in \mathbb{C}\setminus\{0\}$ and 
$(v, \eta)\in \mathscr{H}_{z_0,z_1}\times \R$ is a solution of \eqref{Def(v,eta)} for the Hamiltonian $K\circ \mathfrak{B}$, then $( \mathfrak{B}\circ v, \eta)\in \mathscr{H}_{z_0^2,z_1^2}\times \R$ is the solution of \eqref{Def(v,eta)} for the Hamiltonian $K$. And vice versa: any solution $(v, \eta)\in \mathscr{H}_{z_0^2,z_1^2}\times \R$ of \eqref{Def(v,eta)} for the Hamiltonian $K$ we can lift to two solutions $(v_+,\eta)\in \mathscr{H}_{z_0,z_1}\times \R$ and $(v_-,\eta)\in \mathscr{H}_{-z_0,-z_1}\times \R$ of \eqref{Def(v,eta)} for the Hamiltonian $K\circ\mathfrak{B}$.

Observe that for every $c>0$ the corresponding Hamiltonian $K_c:T^*\mathbb{C}\to\mathbb{R}$ as in \eqref{regHam} satisfies
$$
K_c = 4 |z|^2 \left( K \circ \mathfrak{B}-c\right),
$$
on $T^*(\mathbb{C}\setminus \{0\})$. In other words, $K_c$ is a smooth extension of $4 |z|^2 \left( K \circ \mathfrak{B}-c\right)$ to the whole $T^*\mathbb{C}$. Consequently, we have $(K\circ \mathfrak{B})^{-1}(c)=K_c^{-1}(0)\setminus T^*_0\mathbb{C}$ and
$$
X_{K_c} = 4|z|^2 X_{K \circ \mathfrak{B}}=4|z|^2 D\mathfrak{B}(X_K)\qquad\textrm{on}\quad (K\circ \mathfrak{B})^{-1}(c).
$$
We can view $K_c^{-1}(0)$ as a compactification of the hypersurface $(K\circ \mathfrak{B})^{-1}(c)$. Since $X_{K_c}$ is just a rescaling of $X_{K \circ \mathfrak{B}}$ we have a $1$-to-$1$ correspondence between the solutions of \eqref{Def(v,eta)} on $(K\circ \mathfrak{B})^{-1}(c)$ and the solutions $(v,\eta)$ of \eqref{Def(v,eta)} on $K_c^{-1}(0)$ satisfying $v([0,1])\cap T_0^*\mathbb{C} = \emptyset$. The solutions $(v,\eta)$ of \eqref{Def(v,eta)} on $K_c^{-1}(0)$, such that $v([0,1])\cap T_0^*\mathbb{C} \neq \emptyset$ are called \emph{collision trajectory}.

\begin{rem}
Fix a regular value $c\in \R$ of $K$ and $z_0,z_1\in \mathbb{C}\setminus\{0\}$. Then every solution $(v, \eta)\in \mathscr{H}_{z_0^2,z_1^2}\times \R$ of \eqref{Def(v,eta)} on $K^{-1}(c)$ can be lifted to two solutions $(v_1,\eta_1)\in \mathscr{H}_{z_0,z_1}\times \R$ and $(v_2,\eta_2)\in \mathscr{H}_{-z_0,-z_1}\times \R$ of \eqref{Def(v,eta)} on $K_c^{-1}(0)$. And vice versa: if $(v,\eta)\in \mathscr{H}_{z_0,z_1}\times \R$ is a solution of \eqref{Def(v,eta)} on $K_c^{-1}(0)$ satisfying $v([0,1])\cap T_0^*\mathbb{C}=\emptyset$, then there exists a reparemetrization $\tilde{v}\in \mathscr{H}_{z_0^2,z_1^2}$ of $\beta \circ v $ and $\tilde{\eta}\in \R$, such that $(\tilde{v},\tilde{\eta})$ is a solution of \eqref{Def(v,eta)} on $K^{-1}(c)$.
\end{rem}
We call a finite sequence of maps $\{v_i\}_{i=0}^k$ with a sequence of times $\{t_i\}_{i=1}^k$, $t_i\in (0,1)$ and $\eta\in \R$ a \emph{collision trajectory} between $q_0$ and $q_1$ on $K^{-1}(c)$, if it satisfies the following conditions:
\begin{eqnarray*}
v_0\in W^{1,2}([0,t_1), T^*(\mathbb{C}\setminus\{0\})), & v_0([0,t_1))\subseteq K^{-1}(c), & v_0(0)\in T^*_{q_0}\mathbb{C},\\
v_k \in W^{1,2}((t_k, 1], T^*(\mathbb{C}\setminus\{0\})), & v_k((t_k, 1])\subseteq K^{-1}(c),  & v_k(1)\in T^*_{q_1}\mathbb{C}, \\
v_i\in W^{1,2}((t_i, t_{i+1}), T^*(\mathbb{C}\setminus\{0\})), & v_k((t_i, t_{i+1}))\subseteq K^{-1}(c), & \partial_t v_i = \eta X_K(v_i).
\end{eqnarray*}
Moreover, for $v_i=(z_i,w_i)$ we require
$$
\lim_{t\to t_i}z_{i-1}(t) = \lim_{t\to t_i}z_i(t) = 0,\quad \textrm{and} \quad
\lim_{t\to t_i}\frac{w_{i-1}(t)}{|w_{i-1}(t)|} = -\lim_{t\to t_i}\frac{w_i(t)}{|w_i(t)|}.
$$

\begin{rem}
If $(v,\eta)\in \mathscr{H}_{z_0,z_1}\times \R$ is a solution of \eqref{Def(v,eta)} on $K_c^{-1}(0)$ satisfying $v([0,1])\cap T_0^*\mathbb{C}\neq\emptyset$, then there exists a unique collision trajectory between $z_0^2$ and $z_1^2$ on $K^{-1}(c)$.
\end{rem}

To conclude: to understand solutions of \eqref{Def(v,eta)} on $K^{-1}(c)$ we can analyse solutions of \eqref{Def(v,eta)} on $K_c^{-1}(0)$, instead.

\subsection{Properties of Hamiltonians}
In this subsection we briefly discuss properties of Hamiltonians introduced in \eqref{DefHa} and \eqref{DefH}, which will be crucial later on in establishing uniform bounds on he set of Floer trajectories.

First observe that for a Hamiltonian $H_a$ as in \eqref{DefHa} we have
$$
dH_a(p\partial_p)-H_a=\frac{1}{2}|p|^2.
$$
Consequently, for every Hamiltonian $h\in \mathcal{H}$ satisfying \eqref{DefH} we have
\begin{equation}\label{d(H_a-h)}
d(H_a-h)(p\partial_p)-(H_a-h)=\frac{1}{2}|p|^2+h-dh(p\partial_p)>0.
\end{equation}
In particular, the hypersurface $(H_a-h)^{-1}(0)$ is of contact type.

Now, for $h\in \mathcal{H}$ satisfying \eqref{DefH} we introduce two corresponding constants:
\begin{align}
\alpha & := \inf\left( h-dh(p\partial_p)+\frac{1}{2}|p|^2\right) >0\label{DefAlpha}\\
\beta & := \max\left\lbrace  \sup h, \sup (dh(p\partial_p)-h), -\inf qdq(X_h), \sup pdp(X_h) \right\rbrace\geq 0 \label{DefBeta}
\end{align}

\section{Bounds for Floer trajectories}
\label{sec:Bounds}

The aim of this section is to show that the Hamiltonian $H_a$ defined in \eqref{DefHa} together with the set of compactly supported perturbations defined in \eqref{DefH} satisfy the assumptions of Theorem \ref{thm:DefLRFH}, in order to apply it and prove that $\LRFH(\A^{H_a-h})$ is well defined for every $h\in \mathcal{H}$.

We will start by proving in Lemma \ref{lem:nonEmpty} that assumption \eqref{T*QcapHnonempty} holds true in our setting.
If it weren't, our search for Reeb chords would have been futile.
Next, in Lemma \ref{lem:Chord} we will prove that the second assumption of Theorem \ref{thm:DefLRFH} holds true in our setting, i.e. the critical set of Rabinowitz action functional is continuously compact.

The most challenging part, which will occupy most of this section, is to prove that the Floer trajectories are uniformly bounded in the $L^\infty$-norm. This is essential in defining the Lagrangian Rabinowitz homology and showing that it is invariant of the perturbation $h\in \mathcal{H}$. We will prove it in Theorem \ref{thm:FloerBounds}.

\begin{lem}\label{lem:nonEmpty}
Let $H_a$ be the Hamiltonian defined in \eqref{DefHa} and let $\mathcal{H}$ be the set of perturbations defined in \eqref{DefH}. Then for every $h\in \mathcal{H}$  the set $(H_a-h)^{-1}(0)\cap T_q\R^2$ is compact and non-empty for every $q\in \R^2$.
\end{lem}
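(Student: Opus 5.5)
Fix $q\in\R^2$ and consider the restriction of $F:=H_a-h$ to the fibre $T^*_q\R^2\cong\R^2$, viewed as a function $f(p):=H_a(q,p)-h(q,p)$. On the fibre, $H_a(q,\cdot)$ is the quadratic polynomial $\frac12|p|^2+a\langle p,(q_2,-q_1)\rangle$. The plan is to analyse $f$ along rays $t\mapsto tp$, $t\ge 0$, $|p|=1$, using the Euler-type identity \eqref{d(H_a-h)}.

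\textbf{Compactness.} Since $dh$ is compactly supported, $h$ is bounded (it is constant outside a compact set; in fact $\sup|h|<\infty$ because $T^*\R^2$ minus a compact set is connected and $h$ is bounded on the compact set). Hence $f(p)\ge \frac12|p|^2-a|q||p|-\sup|h|$, which tends to $+\infty$ as $|p|\to\infty$. Thus $f^{-1}(0)$ is closed and bounded in $\R^2$, so compact.

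\textbf{Non-emptiness.} First I evaluate at $p=0$: $f(0)=-h(q,0)$. From \eqref{DefH} at $p=0$ we get $h(q,0)>0$ (the term $dh(p\partial_p)$ vanishes at $p=0$). So $f(0)<0$, while $f(p)\to+\infty$ as $|p|\to\infty$. By the intermediate value theorem along any ray, $f$ has a zero, so $f^{-1}(0)\ne\emptyset$.

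For good measure I will also check that every zero is regular and that each ray meets $f^{-1}(0)$ exactly once, since \eqref{T*QcapHnonempty} also asks for transversality when $q_0=q_1$. Along a ray, $g(t):=f(tp)$ satisfies $t g'(t)-g(t)=\big(dF(p\partial_p)-F\big)(q,tp)>0$ by \eqref{d(H_a-h)}. Hence at any zero $t_*>0$ we have $g'(t_*)>0$. This gives transversality of $T^*_q\R^2$ and $F^{-1}(0)$, and the zero on each ray is unique. Consequently $f^{-1}(0)$ is a smooth circle, star-shaped around $0$ in the fibre.

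I expect no step to be a real obstacle. The only subtlety is making sure $h$ is bounded so that the quadratic growth dominates; this follows from $dh$ being compactly supported.
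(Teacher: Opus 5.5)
Your proof is correct and follows the same route as the paper: $H_a-h$ tends to $+\infty$ along the fibre, $(H_a-h)(q,0)=-h(q,0)<0$ by \eqref{DefH}, the intermediate value theorem applies along each ray, and \eqref{d(H_a-h)} gives a unique zero on each ray (you justify this more carefully than the paper does, via $tg'-g>0$). One small slip: the complement of an arbitrary compact set in $T^*\R^2$ need not be connected, so to get boundedness of $h$ use instead the complement of a closed ball containing $\supp dh$.
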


\begin{proof}
Fix $q\in \R^2$. Since $h$ is constant outside a compact set,
we have
$$
\lim_{|p|\to\infty} \left(H_a(q,p)-h(q,p)\right)=+\infty.
$$
Moreover, we have $H_a(q,0)=0$ and $h(q,0)>0$, since $h\in \mathcal{H}$. Thus
$$
H_0(q,0)-h(q,0)=-h(q,0)<0.
$$
Consequently, by the intermediate value theorem for every $p\in S_q^*\R^2$ there exists $\lambda_p\in \R_+$, such that $H_a(q,\lambda_p p)-h(q,\lambda_p p)=0$. Moreover, by \eqref{d(H_a-h)} the $\lambda_p\in \R_+$ is unique. In other words, $(H_a-h)^{-1}(0)\cap T_q\R^2$ is a circle $S^1$.
\end{proof}

\begin{lem}\label{lem:Chord}
Let $H_a$ be the Hamiltonian defined in \eqref{DefHa} and let $\mathcal{H}$ be the set of perturbations defined in \eqref{DefH}. Then the critical set of the associated Rabinowitz action functional $\mathcal{A}^{H_a-h}$ is continuously compact.
\end{lem}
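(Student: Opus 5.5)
The plan is to derive an a priori $L^\infty$ bound on every chord $v$ with $(v,\eta)\in\Crit\A^{H_a-h}_{q_0,q_1}$. This bound will depend only on $R:=\max\{|q_0|,|q_1|\}$, on $a$, on a radius containing $\supp dh$, and on $\sup h$. We then take
$$
\mathcal{H}_n:=\{h\in\mathcal{H}\ |\ \supp dh\subseteq \operatorname{int} B_n,\ \sup h<n\},
$$
where $B_n:=\{|q|\le n,\ |p|\le n\}$. Each $K_n$ will be a compact set, containing $B_n$, on which the bound holds. Then $\bigcup_n\mathcal{H}_n=\mathcal{H}$, $\bigcup_n K_n=T^*\R^2$, and continuous compactness in $(K_n,\mathcal{H}_n)$ follows. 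If $\eta=0$ the chord is constant and lies in $T^*_{q_0}\R^2\cap(H_a-h)^{-1}(0)$, which is compact by Lemma \ref{lem:nonEmpty}. So assume $\eta\neq0$; after reversing time we may assume $\eta>0$.

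\emph{Step 1: a convexity argument for $|q|^2$.} Away from $\supp dh$ the flow is that of $H_a$. A direct computation from $X_{H_a}$ gives
$$
\dot q=p+a(q_2,-q_1),\qquad \dot p=a(p_2,-p_1).
$$
Hence
$$
\tfrac{d}{dt}\tfrac12|q|^2=q\cdot p,\qquad \tfrac{d}{dt}(q\cdot p)=|p|^2,
$$
because the rotational contributions cancel. Thus along $v$ we have $\tfrac{d^2}{dt^2}|q|^2=2\eta^2|p|^2\ge0$ whenever $v(t)\notin\supp dh$. Suppose $|q(t)|$ attains its maximum at an interior time with $|q(t)|>\max(R,n)$. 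On the connected time interval around that time on which $|q|>\max(R,n)$, the curve stays outside $\supp dh$, so $|q|^2$ is convex there. Its maximum on that interval is therefore attained at an endpoint of the interval, where $|q|=\max(R,n)$ (the chord endpoints have $|q|\le R$). This is a contradiction. Hence $|q(t)|\le \max(R,n)$ for all $t$.

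\emph{Step 2: momentum bound from the energy.} Outside $\supp dh$ the function $h$ equals a constant $h_\infty\le\sup h<n$. The constraint $v\subseteq(H_a-h)^{-1}(0)$ gives
$$
\tfrac12|p|^2=h_\infty-a(p_1q_2-p_2q_1)\le n+a|q||p|.
$$
Combined with Step 1 this yields $|p|\le a\max(R,n)+\sqrt{a^2\max(R,n)^2+2n}$. Inside $\supp dh\subseteq B_n$ the bound $|p|\le n$ is automatic. Setting $K_n:=\{|q|\le\max(R,n),\ |p|\le C_n\}$, with $C_n$ the larger of these two constants, completes the argument.

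I expect the main subtlety to be Step 1: verifying precisely the cancellation giving $\tfrac{d}{dt}(q\cdot p)=|p|^2$ for $H_a$, and making sure the region where $|q|^2$ fails to be convex, namely $\supp dh$, lies inside $\{|q|\le n\}$. Alternatively, one could carry the perturbation terms controlled by $\beta$ from \eqref{DefBeta}, which is presumably what the paper does to get uniform estimates later. A second point to check is that the sets $\mathcal{H}_n$ are open neighbourhoods of $0$ in the sense of Definition \ref{def:contComp}. After shifting by a positive constant, $0$ corresponds to the constant perturbations, and the strict inequalities defining $\mathcal{H}_n$ and \eqref{DefH} are open conditions, since for $h\in\mathcal{H}_n$ the function $h-dh(p\partial_p)+\tfrac12|p|^2$ is bounded below by a positive constant.
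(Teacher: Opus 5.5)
Your proof is correct and follows the same strategy as the paper. Both arguments first bound the radius by a maximum-principle argument in the region where $dh=0$, so that the flow there is that of $H_a$. Both then bound the momenta over the resulting ball using the energy identity $\frac12|p|^2=h-a(p_1q_2-p_2q_1)$.

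The only difference is at the maximum of the radius. The paper applies a pointwise second-derivative test there, obtaining $p_r=0$ and $p_\theta^2/r^3\le 0$, and then needs $h\equiv c>0$ outside the support to exclude $p=0$ on the level set. Your convexity of $|q|^2$ on the whole excursion interval avoids this strictness issue and does not use the energy level at all for the position bound.
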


\begin{proof}
Fix $q_0,q_1\in \R^2$, $R\geq \max\{|q_0|,|q_1|\}$ and $\mathfrak{h}>0$. We will show that the Rabinowitz action functional $\A^{H_a-h}$ has the critical set continuously compact for in the sets
\begin{align*}
K_{R, \mathfrak{h}} & :=
\left\lbrace\begin{array}{c|c}
& r\leq R, \quad p_r \leq \sqrt{R^2+2\mathfrak{h}},\\
\hspace*{-0.2cm}\smash{\raisebox{.5\normalbaselineskip}{ $(r, p_r,\theta,p_\theta)\in T^*\R_+\times T^*S^1$}}& p_\theta \leq 2R\sqrt{R^2+2\mathfrak{h}}.
\end{array}\right\rbrace,\\
\mathcal{H}_{\mathfrak{h}} & := \left\lbrace h \in \mathcal{H}\ \Big|\ \supp dh \subseteq K_{R, \mathfrak{h}}, \quad \|h\|_{L^\infty}\leq \mathfrak{h}\right\rbrace
\end{align*}

Fix $h \in \mathcal{H}_{\mathfrak{h}}$ and let $(v, \eta) \in \Crit \A^{H_0-h}_{q_0,q_1}$. Suppose that $r\circ v(t_0)=\max r \circ v$ for some $t_0\in (0,1)$. Then 
\begin{align*}
\frac{d}{dt}r\circ v (t_{0}) & = \eta\{H_a-h, r\}\circ  v (t_{0})= 0, \\
\frac{d^{2}}{dt^{2}}r\circ v (t_{0}) & =\eta^2\{H_a-h, \{H_a-h, r\}\} \circ v (t_{0})\leq 0,
\end{align*}
where $\{\ ,\ \}$ stands for the Poisson bracket. In other words,
\begin{equation}\label{cpctCord}
v(t_0)\in (H_a-h)^{-1}(0)\cap \{\{H_a-h\},r\}=0\}\cap \{ \{H_a-h,\{H_a-h,r\}\}\leq 0\}.
\end{equation}

We will first show that $\max r \circ v \leq R$.
Arguing by contradiction, suppose that $r\circ v(t_0)=\max r \circ v > R$ for some $t_0\in [0,1]$.  
Since $r\circ v(i)=|q_i|\leq R$ for $i=0,1$, we have $t_0\in (0,1)$ and $v(t_0)$ satisfies \eqref{cpctCord}.
By assumption $v(t_0) \notin K_{R, \mathfrak{h}}$ and $\supp h\subseteq K_{R, \mathfrak{h}}$, hence $h$ does not contribute to the equations above and we have
\begin{align*}
\{H_a-h, r\}\circ v (t_{0}) & = \{H_a, r\}\circ  v (t_{0}) =p_r\circ v(t_0), \\
\{H_a-h,\{H_a-h, r\}\}\circ v (t_{0}) & =\{H_a, \{H_a, r\}\} \circ v (t_{0})= \frac{p_\theta^2}{r^3}\circ v(t_0).
\end{align*}
Moreover, $h|_{T^*\R^2\setminus K_{R,\mathfrak{h}}}\equiv c$ for some positive constant $c\in (0, \mathfrak{h})$. Thus \eqref{cpctCord} becomes
$$
v(t_0)\in H_a^{-1}(c)\cap \{p_r=0\}\cap \{ p_\theta=0\}\subseteq H_a^{-1}(c)\cap H_a^{-1}(0) = \emptyset,
$$
which brings us a contradiction we were looking for.

Now we will show that $(H-h)^{-1}(0)\cap T^*B(R) \subseteq K_{R,\mathfrak{h}}$, where $B(R)\subseteq \R^2$ is a ball of radius $R$ centred at the origin. Let us calculate
\begin{align*}
\frac{p_r^2}{2}+\frac{p_\theta^2}{2r^2} +p_\theta & =h(r,\theta, p_r, p_\theta),\\
p_r^2+\left(\frac{p_\theta}{r}+r\right)^2 & =r^2+2h  \leq R^2+2\mathfrak{h},\\
|p_r \circ v|  & \leq \sqrt{R^2+2\mathfrak{h}},\\
|p_\theta \circ v| & \leq R\left(\sqrt{R^2+2\mathfrak{h}}+R\right)\\
&\leq 2R\sqrt{R^2+2\mathfrak{h}}.
\end{align*}
Thus for all $t \in [0,1]$ we have $v(t)\in (H-h)^{-1}(0)\cap T^*B(R) \subseteq K_{R,\mathfrak{h}}$.
\end{proof}
Now we will present the main theorem of this section:
\begin{thm}\label{thm:FloerBounds}
Let $H_a$ be the Hamiltonian defined in \eqref{DefHa} and let $h_0,h_1 \in \mathcal{H}$. Fix two points $q_0,q_1 \in \R^2$. Let $K$ be a compact subset of $T^*\R^2$, such that $q_0,q_1\in K$ and $\supp dh_i \subseteq K$.
Let $\mathcal{V}\subseteq T^*\R^2$ be an open, but precompact subset, such that $K\subseteq \mathcal{V}$. Let $\Gamma:=\{(h_s, J_s)\}_{s\in \R}$ be a smooth homotopy of the Hamiltonians $h_s\in \mathcal{H}$ and $2$-parameter families of almost complex structures $J_s \in C^\infty([0,1]\times \R,\mathcal{J}(\mathcal{V},\mathbb{J}))$, such that
\begin{enumerate}[label=\alph*)]
\item the homotopy is constant in $s$ outside $[0,1]$,
\item we have $\supp dh_s \subseteq K$ for all $s\in \R$,
\item the Hamiltonians  satisfy
\begin{equation}\label{CondGamma}
\|\partial_sh_s\|_{L^\infty}\left( \frac{1}{\alpha}+\sqrt{2(\alpha+\beta)}\|J\|_{L^{\infty}}^2\right) \leq \frac{1}{3}.
\end{equation}

\end{enumerate}

Then for every pair $(\mathbf{a},\mathbf{b})\in \R^2$ the corresponding space $\mathcal{M}^\Gamma(\mathbf{a},\mathbf{b})$ of Floer trajectories defined in \eqref{DefM(a,b)} is bounded in the $L^\infty$-norm.
\end{thm}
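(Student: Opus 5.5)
The plan follows the standard scheme of \cite[Thm. 3.3]{Wisniewska2024}. We first bound the Lagrange multiplier $\eta$ along Floer trajectories, then the energy, and finally we get $L^\infty$ bounds on $v$ by a maximum principle outside the compact set $K$, where the Hamiltonian is $H_a-c_s$ with $c_s$ constant.

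\textbf{Step 1 (action and energy bounds).} For $u=(v,\eta)\in\mathcal{M}^\Gamma(\mathbf{a},\mathbf{b})$ the standard identity
$$\A^{H_+}(x_+)-\A^{H_-}(x_-)=\int\|\partial_s u\|^2ds-\int\!\!\int \eta(s)\,\partial_s h_s(v)\,dt\,ds$$
shows that the energy $E(u)=\int\|\partial_su\|^2$ is bounded by $\mathbf{b}-\mathbf{a}+\|\partial_sh_s\|_{L^\infty}\int_0^1|\eta|ds$, since $\partial_s h_s$ is supported in $s\in[0,1]$. So everything reduces to an $L^\infty$ bound on $\eta$ over $s\in[0,1]$.

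\textbf{Step 2 (bound on $\eta$).} Take the Liouville field $Y=p\partial_p$, which is tangent to the fibres, so $\lambda(Y)=0$ and the boundary terms vanish on $T^*_{q_i}\R^2$. By \eqref{dA^H(Y)} and \eqref{d(H_a-h)},
$$\A(v,\eta)-d\A(v,\eta)[Y,0]=\eta\int_0^1\Big(\tfrac12|p|^2+h_s-dh_s(p\partial_p)\Big)dt .$$
The integrand is at least $\alpha$. Moreover $|d\A[Y,0]|\le \|Y\|_{L^2}\|\partial_sv\|$, where $\|Y\|_{L^2}=\|p\|_{L^2}$ is controlled by the $L^2$-norm of $p$. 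The obstacle is that $\|p\|_{L^2}$ is itself unbounded. I would control it through the same identity: $\int\tfrac12|p|^2\le \eta^{-1}(\ldots)$ combined with $\int(H_a-h_s)(v)=-\partial_s\eta$. The constant $\beta$ bounds the perturbation terms, which gives a fundamental lemma of the form
$$\|\nabla\A(u)\|\le\epsilon\ \Longrightarrow\ |\eta|\le \tfrac{1}{\alpha}|\A(u)|+C .$$
This is where $\sqrt{2(\alpha+\beta)}\|J\|^2_{L^\infty}$ enters: $|p|^2\lesssim 2(\alpha+\beta)$ on the relevant level, and $\|J\|^2$ converts $\omega$-norms into metric norms. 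We then run the usual argument that the time spent where $\|\nabla \A\|\ge\epsilon$ is bounded by $E/\epsilon^2$, together with $|\partial_s\eta|\le\|\nabla\A\|$. Assumption \eqref{CondGamma}, with the factor $\tfrac13$, makes the resulting linear inequality $\sup|\eta|\le C+\tfrac13\sup|\eta|+\ldots$ absorbable. This step is the delicate one, because the hypersurface is non-compact and the contact-type estimate only holds after controlling $|p|$.

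\textbf{Step 3 ($L^\infty$ bound on $v$).} Outside $\mathcal{V}$ we have $J=\mathbb{J}$ and $h_s$ is constant. There I would apply a maximum principle to a plurisubharmonic-type function, first $\rho=\tfrac12|q|^2$, then $|p|^2$ or a combination adapted to $H_a$. We compute $\Delta(\rho\circ v)$ using the Floer equation, with the boundary conditions $v(s,i)\in T^*_{q_i}\R^2$ handled by Hopf's lemma: $\rho$ restricted to the fibre is constant, and $d\rho\circ\mathbb{J}$ vanishes on the conormal. The terms $\eta X_{H_a}$ contribute $\eta$ times quantities like $p_r$ and $p_\theta^2/r^3$, as in the proof of Lemma \ref{lem:Chord}. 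Since $\eta$ is bounded by Step 2 and the energy by Step 1, the error terms can be controlled by a Gronwall/mean-value argument on strips, in the spirit of \cite[Thm. 3.3]{Wisniewska2024}. This yields uniform bounds first on $|q|$ and then on $|p|$ via the equation $\partial_s\eta=-\int(H_a-h_s)(v)$ and the energy bound. Together these give the asserted $L^\infty$ boundedness of $\mathcal{M}^\Gamma(\mathbf{a},\mathbf{b})$.
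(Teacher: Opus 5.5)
Your Steps 1 and 2 follow the paper's route (Lemmas \ref{lem:BoundEta} and \ref{lem:ActBound}). You use the Liouville identity for $Y=p\partial_p$ against the two lower bounds $\alpha$ and $\tfrac12|p|^2-\beta$, control the time spent where $\|\nabla\A\|$ is not small by the energy, and absorb the $\|\eta\|_{L^\infty}$-term using \eqref{CondGamma}.

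\textbf{The gap is in Step 3, which is where the content of the theorem lies.} A plain maximum principle for $\rho=\tfrac12|q|^2$ (or $|p|^2$) on components of $\{v\notin\mathcal{V}\}$ does not close. There $h_s$ is constant and $J=\mathbb{J}$, but $X_{H_a}$ does not vanish at infinity. Computing $\Delta(\rho\circ v)$ with the standard structure, you can absorb the $\eta$-dependent terms into $|\nabla q|^2+a^2\eta^2|q|^2$ and a drift term $\eta\,\partial_s(\rho\circ v)$. What remains is a forcing term $a\,(\partial_s\eta)\,p_\theta$, with $p_\theta=p_1q_2-p_2q_1$ and sign depending on conventions. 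This term has no sign. Moreover, $\partial_s\eta=-\int_0^1(H_a-h_s)(v)\,dt$ is controlled only in $L^2(ds)$ by the energy, and $p_\theta$ is not a priori bounded. So $\rho\circ v$ is not subharmonic, and Hopf's lemma gives nothing.

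A second problem is that the components of $\{v\notin\mathcal{V}\}$ have no a priori bound on their $s$-length. Near-critical paths of bounded action need not lie in $\mathcal{V}$, so even a diameter-dependent estimate is unavailable there. Also, the relation $\partial_s\eta=-\int(H_a-h_s)(v)\,dt$ only gives integrated control of $p$, never pointwise control. Your ``Gronwall/mean-value argument on strips'' is therefore exactly the missing part, and as stated it has no domain of bounded size to work on.

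\textbf{The missing idea is to extend your ``fundamental lemma'' from $\eta$ to the whole path.} For $(v,\eta)\in\mathcal{B}(H_a-h_s,\varepsilon)$ with bounded action, one also gets $L^\infty$ bounds on $q$ and $p$ (Proposition \ref{prop:Infinitesimal}). The paper obtains these in four steps:
\begin{enumerate}
\item Bound $|\eta|\,\|p\|_{L^2}$ (Lemma \ref{lem:BoundEtaP}).
\item Integrate $q\,dq(\partial_tv)$, using that the rotation part cancels, so $q\,dq(X_{H_a-h})\le|q||p|+\beta$ (Lemma \ref{lem:BoundQ}).
\item Bound $\|p\|_{L^2}$ from $\bigl|\int(H_a-h)(v)\bigr|<\varepsilon$.
\item Bound $\|p\|_{L^\infty}$ using $p\,dp(X_{H_a})=0$.
\end{enumerate}
This gives a compact $K_\varepsilon$ containing $v(s,[0,1])$ whenever $u(s)$ is $\varepsilon$-almost critical. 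Hence every component $\Omega$ of $(\R\times[0,1])\setminus v^{-1}(K_\varepsilon)$ lies in a strip of $s$-length at most (energy)$/\varepsilon^2$, uniformly in $u$, with interior boundary values in $K_\varepsilon$. Only on such domains can one apply Aleksandrov's maximum principle. Its estimate depends on $\operatorname{diam}\Omega$ and on the $L^2$-norm of the forcing term, which is controlled by the energy together with the $L^2$ bounds of Lemma \ref{lem:L2bounds}.
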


\subsection{The set of infinitesimal action derivation}
For a Hamiltonian $H:T^*Q\to \R$ and a positive constant $\varepsilon>0$ we define the following set:
\begin{equation}
\mathcal{B}(H,\varepsilon):= \left\lbrace\begin{array}{c|c}
(v, \eta) \in\mathscr{H}_{q_0,q_1}\times\R & \|\nabla\A^H(v,\eta)\|_{L^2\times\R}<\varepsilon.
\end{array}\right\rbrace
\end{equation}
We will call this set the set of infinitesimally small action derivation.

\begin{prop}\label{prop:Infinitesimal}
Fix $a>0$ and let $H_a$ be a Hamiltonian as in \eqref{DefHa}. Fix $h\in \mathcal{H}$ and let $\alpha>0$ and $\beta\geq 0$ be constants as in \eqref{DefAlpha} and \eqref{DefBeta}, respectively.
Then for every $\varepsilon>0$, whenever $(v,\eta)\in \mathcal{B}(H_a-h,\varepsilon)$ then it satisfies the following inequalities:
\begin{align*}
|\eta| & \leq \frac{1}{\alpha}\left(|\A^{H_a-h}(v,\eta)|+\varepsilon \sqrt{2(\alpha+\beta)}\right),\\
\|q\|_{L^2} & \leq \min\{|q_0|, |q_1|\}+\frac{2}{\alpha}\left(\varepsilon(3\alpha+2\beta) +\left|\A^{H_a-h}(q,p,\eta)\right|\sqrt{2(\alpha+\beta)}\right)\\
& +\sqrt{\frac{2\beta}{\alpha}\left(\left|\A^{H_a-h}(q,p,\eta)\right|+\varepsilon \sqrt{2(\alpha+\beta)}\right)},\\
\|p\|_{L^2} & \leq 2|a| \min\{|q_0|, |q_1|\}+\frac{4|a|}{\alpha}\left(\varepsilon(3\alpha+2\beta) +\left|\A^{H_a-h}(q,p,\eta)\right|\sqrt{2(\alpha+\beta)}\right)\\
& +2|a|\sqrt{\frac{2\beta}{\alpha}\left(\left|\A^{H_a-h}(q,p,\eta)\right|+\varepsilon \sqrt{2(\alpha+\beta)}\right)}+\sqrt{2(\varepsilon+\beta)}.
\end{align*}
\end{prop}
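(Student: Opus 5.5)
Write $H:=H_a-h$, $v=(q,p)$, and let $(v,\eta)\in\mathcal{B}(H,\varepsilon)$. Then $\|\partial_t v-\eta X_H(v)\|_{L^2}<\varepsilon$ and $|\int_0^1 H(v)\,dt|<\varepsilon$. Because $J$ is compatible we can pass between $-J(\partial_tv-\eta X_H)$ and $\partial_tv-\eta X_H$; the $J$-dependence is absorbed or taken to be $\mathbb{J}$ standard, and it is not tracked here. The argument runs in three steps: first $\eta$, then the momentum $p$, then the position $q$, and finally $p$ again through the Hamiltonian equation for $\dot q$.

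\textbf{Step 1: bound on $\eta$.} Apply identity \eqref{dA^H(Y)} with the Liouville field $Y=p\partial_p$. This field is tangent to the fibres, so it preserves the Lagrangian boundary conditions and is an admissible direction. By \eqref{d(H_a-h)},
$$
\eta\int_0^1\Bigl(\tfrac12|p|^2+h-dh(p\partial_p)\Bigr)dt=\A^H(v,\eta)-d\A^H(v,\eta)[Y,0].
$$
The integrand is at least $\alpha$, so $|\eta|\alpha\le|\A^H|+\varepsilon\|p\|_{L^2}$. To control $\|p\|_{L^2}$, I would use $|\int H|<\varepsilon$. We have $\tfrac12|p|^2+h-dh(p\partial_p)\le \tfrac12|p|^2+\beta$ and $H_a\ge \tfrac12|p|^2-\ldots$, but that route runs into the linear term $ap_\theta$. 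The cleaner route is the pointwise identity $H=dH(Y)-\bigl(\tfrac12|p|^2+h-dh(Y)\bigr)$, which gives a bound of the form $\tfrac12\|p\|^2\lesssim\alpha+\beta$. The target constant $\sqrt{2(\alpha+\beta)}$ suggests exactly this kind of estimate on the weighted average. Once that bound is in hand, the $|\eta|$ inequality follows.

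\textbf{Step 2: bound on $p$ in $L^2$.} I would prove $\|p\|_{L^2}^2\le 2(\varepsilon+\beta)$ up to lower-order terms. This comes from $\int(\tfrac12|p|^2+ap_\theta)=\int H+\int h\le\varepsilon+\beta$, combined with the observation that the rotation term $ap_\theta=a\langle q,ip\rangle$ has to be absorbed using the $q$-bound below. This is where the prefactor $2|a|$ in front of the $q$-type terms in the stated $\|p\|$ bound comes from.

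\textbf{Step 3: bound on $q$ in $L^2$.} Write $\dot q=\eta\,\partial_pH+e_q$, where $\partial_pH=p+a(q_2,-q_1)-\partial_ph$ and $\|e_q\|_{L^2}<\varepsilon$. Pair this with $q$ and use rotation invariance: $\langle q,a(q_2,-q_1)\rangle=0$. This gives
$$
\tfrac{d}{dt}\tfrac12|q|^2=\eta\langle q,p\rangle-\eta\, q\,dq(X_h)+\langle q,e_q\rangle.
$$
The term $-\eta\, q\,dq(X_h)$ is bounded by $|\eta|\beta$ in the favourable direction, by the definition of $\beta$ in \eqref{DefBeta}. Integrating from the endpoint with the smaller $|q_i|$ yields $|q(t)|\le\min|q_i|+\ldots$. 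The $\sqrt{2\beta|\eta|/\alpha}$-type term arises from integrating a quadratic quantity and taking a square root. The term $\eta\langle q,p\rangle$ is handled through the identity $\langle q,p\rangle$ appearing in the derivative $\tfrac{d}{dt}\langle q,p\rangle$, or more simply by using the equation for $\dot p$, namely $\dot p=\eta(a(p_2,-p_1)+\partial_qh)$ plus an error. In that equation $|p|$ is almost conserved up to $p\,dp(X_h)\le\beta$. Finally, the bound on $\|p\|_{L^2}$ is obtained from the identity $p=\dot q/\eta-a(q_2,-q_1)+\ldots$ together with $\int H\approx0$.

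\textbf{Main obstacle.} I expect Step 3 to be the hardest: bounding $\|q\|_{L^2}$ without any a priori $L^\infty$ control. The cross term $\eta\langle q,p\rangle$ has to be estimated using Step 1 and the $\dot p$ equation. My first attempt would be to exploit the fact that $H_a$ is invariant under rotation and that the quantity $\tfrac12|p+a\,iq|^2-\tfrac{a^2}{2}|q|^2$ is conserved under the unperturbed flow.
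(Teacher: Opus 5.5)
\textbf{The gap is in Step 1, and it leaves Step 3 unfinished.} The a priori bound you want on $\|p\|_{L^2}$ does not exist. With $Y=p\partial_p$ one has $dH(Y)=|p|^2+ap_\theta-dh(Y)$. So the identity $H=dH(Y)-\bigl(\tfrac12|p|^2+h-dh(Y)\bigr)$ is just $H=\tfrac12|p|^2+ap_\theta-h$ rewritten. Integrating it gives nothing beyond $|\int_0^1H(v)\,dt|<\varepsilon$, and you meet the same sign-indefinite term $ap_\theta$ again.

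You would need $\|p\|_{L^2}\le\sqrt{2(\alpha+\beta)}$ to turn $\alpha|\eta|\le A+\varepsilon\|p\|_{L^2}$ into the stated estimate, but no such bound holds. Take $h\equiv c>0$, so $\alpha=\beta=c$. Choose $(q_0,p_0)$ with $|q_0|=R$ and $p_0\perp q_0$, oriented so that $p_\theta=-R|p_0|$, with $|p_0|=aR+\sqrt{a^2R^2+2c}$; then $H_a(q_0,p_0)=c$. Flow by $X_{H_a}$ for time $1$ and let $q_1$ be the base point of the endpoint. This is a critical point, so it lies in $\mathcal{B}(H_a-h,\varepsilon)$ for every $\varepsilon$. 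Since $p\,dp(X_{H_a})=0$, it has $\|p\|_{L^2}=|p_0|\approx 2aR$, which is unbounded in $R$.

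\textbf{The missing idea} is a second lower bound on the same integrand in \eqref{dA^H(Y)}. By the definition of $\beta$, $\tfrac12|p|^2+h-dh(p\partial_p)\ge\tfrac12|p|^2-\beta$. Write $A:=|\A^{H_a-h}(v,\eta)|$ and $x:=\|p\|_{L^2}$. Then $|\eta|\le(A+\varepsilon x)/\alpha$, and whenever $\tfrac12x^2>\beta$ also $|\eta|\le(A+\varepsilon x)/(\tfrac12x^2-\beta)$. So a large $\|p\|_{L^2}$ forces $|\eta|$ to be small.

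Take the worst case over the unknown $x\ge0$. It is attained at the crossover $x=\sqrt{2(\alpha+\beta)}$, which is where that constant comes from; it is not a bound on $\|p\|_{L^2}$. This gives the stated $|\eta|$-bound. Applying the same worst-case argument to $x$ times the minimum gives
$|\eta|\,\|p\|_{L^2}\le\frac1\alpha\bigl(2\varepsilon(\alpha+\beta)+A\sqrt{2(\alpha+\beta)}\bigr)$.

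This product bound is exactly what controls the cross term you flag as the main obstacle. In $|q(t)|^2-|q_0|^2=2\int_0^t\langle q,\dot q\rangle\,dt$ it enters as $2|\eta|\,\|q\|_{L^2}\|p\|_{L^2}$. You then get an inequality quadratic in $\|q\|_{L^2}$, and Step 3 closes.

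Without this bound your Steps 2 and 3 are circular: Step 2 needs the $q$-bound, and Step 3 needs control of $\eta p$. The substitutes you suggest (the $\dot p$ equation, near-conservation of $|p|$, conserved quantities of the unperturbed flow) are never turned into an estimate. None of them bounds $|\eta|\,\|p\|_{L^2}$ unless you already know $|p|$ somewhere.

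Once $\|q\|_{L^2}$ is bounded, your Step 2 is the right finish: $\tfrac12\|p\|_{L^2}^2\le\varepsilon+\beta+|a|\,\|q\|_{L^2}\|p\|_{L^2}$, solved as a quadratic. Drop the alternative of recovering $p$ from $\dot q/\eta$, since $\eta$ may be arbitrarily small and $\|\partial_tv\|_{L^2}$ is not controlled.
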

We will prove the proposition in several lemmas:

\begin{lem}\label{lem:BoundEta}
Consider the setting as in Proposition \ref{prop:Infinitesimal}. Then for every $\varepsilon>0$, whenever $(v,\eta)\in \mathcal{B}(H_a-h,\varepsilon)$ then
$$
|\eta|\leq \frac{1}{\alpha}\left(\left|\A^{H_a-h}(v,\eta)\right|+\varepsilon \sqrt{2(\alpha+\beta)}\right).
$$
\end{lem}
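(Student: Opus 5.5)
We use the identity \eqref{dA^H(Y)} with the Liouville vector field $Y=p\partial_p$; note that $L_Y\omega=\omega$ and $\lambda(Y)=0$. This vector field is admissible on $\mathscr{H}_{q_0,q_1}$, because fibrewise scaling preserves the cotangent fibres $T^*_{q_0}\R^2$ and $T^*_{q_1}\R^2$. Write $H:=H_a-h$ and $(v,\eta)=(q,p,\eta)$. By \eqref{d(H_a-h)},
$$
\A^H(v,\eta)-d\A^H(v,\eta)[Y,0]=\eta\int_0^1\Big(\tfrac12|p|^2+h-dh(p\partial_p)\Big)\circ v\,dt .
$$
The integrand is bounded below by $\alpha$, by \eqref{DefAlpha}. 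Hence
$$
\alpha|\eta|\leq |\A^H(v,\eta)|+|d\A^H(v,\eta)[Y,0]|.
$$
It therefore suffices to show that $|d\A^H(v,\eta)[Y,0]|\leq \varepsilon\sqrt{2(\alpha+\beta)}$.

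By \eqref{eq:dAH},
$$
d\A^H(v,\eta)[Y,0]=\int_0^1\omega(Y,\partial_tv-\eta X_H)\,dt .
$$
We estimate this with Cauchy--Schwarz. Assuming the metric is the standard one (or absorbing norms of $J$), we get
$$
|d\A^H(v,\eta)[Y,0]|\leq \|p\|_{L^2}\,\|\partial_tv-\eta X_H\|_{L^2}\leq \varepsilon\|p\|_{L^2},
$$
since $(v,\eta)\in\mathcal{B}(H,\varepsilon)$. The key remaining step is a bound on $\|p\|_{L^2}$ in terms of $\alpha,\beta$ alone.

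For this we use the second component of the gradient, namely $\left|\int_0^1 H(v)\,dt\right|<\varepsilon$. From \eqref{d(H_a-h)} we have $\tfrac12|p|^2=dH(Y)-H+h-dh(p\partial_p)$. The terms $h-dh(p\partial_p)$ are at most $\beta$ in absolute value where needed, by \eqref{DefBeta}, and the $H$ term is controlled in mean by $\varepsilon$. The term $dH(Y)$ is the delicate one: it is $|p|^2+ap_\theta-dh(p\partial_p)$, which is not directly small.

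My first attempt is instead to combine the two estimates linearly. We have $\int\tfrac12|p|^2 = \int (h-dh(p\partial_p)+\tfrac12|p|^2)-\int h+\int dh(p\partial_p)$. The first integral equals $(\A^H-d\A^H[Y,0])/\eta$, and the remaining two are bounded by $\beta$. This makes $\|p\|_{L^2}^2$ comparable to $2(\alpha+\beta)$ after normalising, since where the infimum $\alpha$ is nearly attained one has $\tfrac12|p|^2\leq \alpha+\beta$ pointwise.

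To be more precise, the bound I expect to use is a pointwise one. Write $\tfrac12|p|^2\leq (h-dh(p\partial_p)+\tfrac12|p|^2)+\beta$. I then hope to control the $L^1$ norm of the bracket by $\alpha$ using the action, which would give $\|p\|^2_{L^2}\leq 2(\alpha+\beta)$.

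\textbf{Main obstacle.} Making the last step honest is the hard part, since bounding $\|p\|_{L^2}$ uniformly by $\sqrt{2(\alpha+\beta)}$ seems to require extra input beyond the stated constants. If a clean bound fails, I would first try absorbing: the estimate $\alpha|\eta|\leq|\A|+\varepsilon\|p\|_{L^2}$, combined with a bound of $\|p\|_{L^2}^2$ by $2(\alpha+\beta)$ plus lower-order terms, should still yield the stated inequality.
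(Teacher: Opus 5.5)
There is a genuine gap. Your first steps are correct and match the paper: from \eqref{dA^H(Y)} with $Y=p\partial_p$ and Cauchy--Schwarz you get $\alpha|\eta|\leq A+\varepsilon\|p\|_{L^2}$, where $A:=|\A^{H_a-h}(v,\eta)|$. The problem is your reduction ``it suffices to show $\|p\|_{L^2}\leq\sqrt{2(\alpha+\beta)}$''. That bound is false, because $\|p\|_{L^2}$ is not controlled by $\alpha,\beta$ alone.

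For example, take $h\equiv c>0$. Then $\alpha=\beta=c$, so $\sqrt{2(\alpha+\beta)}=2\sqrt{c}$. But over a point $q$ with $|q|$ large, the level set $(H_a-c)^{-1}(0)$ contains covectors with $|p|\approx 2a|q|$. With $q_0=q_1$ far from the origin, the constant path at such a point with $\eta=0$ is even a critical point, and its $\|p\|_{L^2}$ is far larger than $2\sqrt{c}$. The $p$-bound in Proposition \ref{prop:Infinitesimal} does depend on $a$, $|q_i|$ and the action, and it is derived from the $\eta$-bound, so invoking it here would be circular.

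Your hope to bound $\int\bigl(h-dh(p\partial_p)+\tfrac12|p|^2\bigr)$ from above by $\alpha$ also goes the wrong way. $\alpha$ is a lower bound for that integral, which can be arbitrarily large when $|\eta|$ is small. The ``absorbing'' fallback is too vague to close the argument.

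The missing idea is that you never need to bound $\|p\|_{L^2}$. Your ``linear combination'' remark already yields the paper's second inequality. Since $d(H_a-h)(p\partial_p)-(H_a-h)\geq\frac12|p|^2-\beta$, the same identity gives
\[
|\eta|\left(\tfrac12\|p\|_{L^2}^2-\beta\right)\leq A+\varepsilon\|p\|_{L^2}.
\]
Set $x:=\|p\|_{L^2}$ and split into two cases.

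If $x\leq\sqrt{2(\alpha+\beta)}$, your first inequality gives
\[
|\eta|\leq\tfrac1\alpha(A+\varepsilon x)\leq\tfrac1\alpha\bigl(A+\varepsilon\sqrt{2(\alpha+\beta)}\bigr).
\]

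If $x>\sqrt{2(\alpha+\beta)}$, then $\frac12x^2-\beta>\alpha>0$, so $|\eta|\leq\frac{A+\varepsilon x}{\frac12x^2-\beta}$. This function is decreasing on $(\sqrt{2\beta},\infty)$: the numerator of its derivative is $-(\frac{\varepsilon}{2}x^2+Ax+\varepsilon\beta)<0$. Hence it is at most its value at $x=\sqrt{2(\alpha+\beta)}$, which is again $\frac1\alpha\bigl(A+\varepsilon\sqrt{2(\alpha+\beta)}\bigr)$.

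So a large $\|p\|_{L^2}$ is compensated by the large coefficient multiplying $|\eta|$. This is exactly the paper's maximization of the function $f$ in \eqref{Deff}, whose maximum sits at the crossover point $x=\sqrt{2(\alpha+\beta)}$.
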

\begin{proof}
Note that by \eqref{d(H_a-h)} and \eqref{DefBeta} we have 
$$
d(H_a-h)(p\partial_p)-(H_a-h)\geq \frac{1}{2}|p|^2-\beta, \qquad \forall\ (q,p)\in T^*\R^2.
$$

Take $(q,p, \eta)\in \mathcal{B}(H_a-h,\varepsilon)$ and denote $A:=\left|\A^{H_a-h}(q,p,\eta)\right|$. Combining equality \eqref{dA^H(Y)} with the inequality above we obtain
$$
A+\varepsilon \|p\|_{L^2} \geq |\eta|\left( \frac{1}{2}\|p\|^2_{L^2} -\beta \right).
$$
On the other hand, combining equality \eqref{dA^H(Y)} with \eqref{d(H_a-h)} and \eqref{DefAlpha} we obtain 
$$
A+\varepsilon \|p\|_{L^2} \geq \alpha|\eta|.
$$
Now, if we denote
\begin{gather}
f(x) :=\min\left\lbrace \frac{1}{\alpha}\left( A+\varepsilon x\right), \frac{A+\varepsilon x}{\frac{1}{2}x^2-\beta}\right\rbrace,\label{Deff} \\
\textrm{then} \qquad |\eta| \leq \max_{x\geq 0} f(x).\nonumber
\end{gather}
Let us calculate
\begin{align*}
f(x) & = \begin{cases}
 \frac{1}{\alpha}\left( A+\varepsilon x\right) & \textrm{for} \quad x \in \left[0, \sqrt{2(\alpha+\beta)}\right]\\
 \frac{A+\varepsilon x}{\frac{1}{2}x^2-\beta} & \textrm{for} \quad x \geq \sqrt{2(\alpha+\beta)}
\end{cases}\\
f'(x) & = \begin{cases}
 \frac{\varepsilon}{\alpha} & \textrm{for} \quad x \in \left[0, \sqrt{2(\alpha+\beta)}\right)\\
 -\frac{\frac{\varepsilon}{2}x^2+A+\varepsilon \beta}{\left(\frac{1}{2}x^2-\beta\right)^2} & \textrm{for} \quad x > \sqrt{2(\alpha+\beta)}
\end{cases}\\
\textrm{Since}\qquad f'(x) & >0\qquad \textrm{for}\qquad x \in \left[0, \sqrt{2(\alpha+\beta)}\right),\\
 \textrm{and}\qquad f'(x) & <0\qquad \textrm{for}\qquad x > \sqrt{2(\alpha+\beta)},\\
  \textrm{therefore} \qquad \max_{x\geq 0} f(x) & = f\left(\sqrt{2(\alpha+\beta)}\right)=\frac{1}{\alpha}\left(A+\varepsilon \sqrt{2(\alpha+\beta)}\right),
\end{align*}
which concludes the proof.
\end{proof}

\begin{lem}\label{lem:BoundEtaP}
Consider the setting as in Proposition \ref{prop:Infinitesimal}. 
If $(q,p,\eta)\in\linebreak \mathcal{B}(H_a-h, \varepsilon)$ then
$$
|\eta|\|p\|_{L^2} \leq \frac{1}{\alpha}\left(2\varepsilon(\alpha+\beta) +\left|\A^{H_a-h}(q,p,\eta)\right|\sqrt{2(\alpha+\beta)}\right).
$$
\end{lem}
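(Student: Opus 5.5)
The plan is to reuse the two inequalities from the proof of Lemma \ref{lem:BoundEta}, multiply them by $\|p\|_{L^2}$, and maximize the resulting one-variable bound. Take $(q,p,\eta)\in \mathcal{B}(H_a-h,\varepsilon)$ and set $A:=\left|\A^{H_a-h}(q,p,\eta)\right|$ and $x:=\|p\|_{L^2}$. Pairing \eqref{dA^H(Y)} with the Liouville field $Y=p\partial_p$, and bounding the pairing with the gradient via Cauchy--Schwarz by $\varepsilon x$, the proof of Lemma \ref{lem:BoundEta} gives
$$
A+\varepsilon x \geq |\eta|\left(\tfrac12 x^2-\beta\right)\qquad\textrm{and}\qquad A+\varepsilon x\geq \alpha|\eta| .
$$
Multiplying both by $x\geq 0$ yields $|\eta|\,x\leq g(x)$. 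Here $g(x):=x f(x)$, and $f$ is the function from \eqref{Deff}, namely
$$
g(x)=\begin{cases}\frac{x(A+\varepsilon x)}{\alpha} & x\in\left[0,\sqrt{2(\alpha+\beta)}\right],\\[2pt] \frac{Ax+\varepsilon x^2}{\frac12x^2-\beta} & x\geq \sqrt{2(\alpha+\beta)}.\end{cases}
$$
On the second interval the second inequality is the sharper one, because $\tfrac12x^2-\beta\geq\alpha$ there. So it suffices to show $\sup_{x\geq0}g(x)=g\bigl(\sqrt{2(\alpha+\beta)}\bigr)$.

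Set $s:=\sqrt{2(\alpha+\beta)}$. On $[0,s]$ the function $g$ is increasing, so its maximum there is
$$
g(s)=\frac{sA+\varepsilon s^2}{\alpha}=\frac1\alpha\left(2\varepsilon(\alpha+\beta)+A\sqrt{2(\alpha+\beta)}\right),
$$
which is exactly the claimed bound. The two branches agree at $x=s$, since $\tfrac12 s^2-\beta=\alpha$.

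The only step needing care is to show that $g$ is nonincreasing for $x>s$. Unlike $f$, the function $g$ carries an extra factor $x$, so monotonicity is not automatic. I will compute the numerator of the derivative directly:
$$
(A+2\varepsilon x)\left(\tfrac12x^2-\beta\right)-(Ax+\varepsilon x^2)x=-\tfrac12Ax^2-A\beta-2\varepsilon\beta x .
$$
This is $\leq 0$ for all $x\geq0$, because $A,\beta,\varepsilon\geq0$. Hence $g$ is nonincreasing on $[s,\infty)$, and $\max_{x\geq0}g=g(s)$. This concludes the argument.
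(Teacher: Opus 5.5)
Your proposal is correct and follows the same route as the paper: you bound $|\eta|\,\|p\|_{L^2}$ by $\max_{x\geq 0} x f(x)$ with $f$ from \eqref{Deff}, observe that the first branch is increasing, and show that the second branch is nonincreasing via the derivative numerator $-\tfrac12 Ax^2-2\varepsilon\beta x-A\beta$. Your version of that computation is cleaner than the paper's, which has typos and claims strict negativity (this fails when $A=\beta=0$); ``nonincreasing'' is exactly what is needed.
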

\begin{proof}
Let $f:\R_+\to \R_+$ be the continuous function defined in \eqref{Deff}. Then for all $(q,p,\eta) \in \mathcal{B}(H_a-h, \varepsilon)$ we have 
$$
|\eta|\|p\|_{L^2} \leq \max_{x\geq 0} xf(x).
$$
We can calculate
\begin{align*}
\frac{d}{dx}\left(\frac{Ax+\varepsilon x^2}{\frac{1}{2}x^2-\beta} \right) & = \frac{(A+2\varepsilon)(\frac{1}{2}x^2-\beta)-x^2(A+\varepsilon x)}{\left( \frac{1}{2}x^2-\beta\right)^2}\\
& = -\frac{\frac{1}{2}\mathfrak{a}x^2+2 \beta \varepsilon x+\beta A}{\left( \frac{1}{2}x^2-\beta\right)^2}<0,\\
\max_{x\geq 0} xf(x) & = \frac{1}{\alpha}\left(2\varepsilon(\alpha+\beta) +A\sqrt{2(\alpha+\beta)}\right).
\end{align*}
\end{proof}

\begin{lem}\label{lem:BoundQ}
Consider the setting as in Proposition \ref{prop:Infinitesimal}. 
If $(q,p,\eta)\in\linebreak \mathcal{B}(H_a-h, \varepsilon)$, then
\begin{align*}
\|q\|_{L^2}, \|q\|_{L^\infty} & \leq \min\{|q_0|, |q_1|\}+\frac{2}{\alpha}\left(\varepsilon(3\alpha+2\beta) +\left|\A^{H_a-h}(q,p,\eta)\right|\sqrt{2(\alpha+\beta)}\right)\\
& +\sqrt{\frac{2\beta}{\alpha}\left(\left|\A^{H_a-h}(q,p,\eta)\right|+\varepsilon \sqrt{2(\alpha+\beta)}\right)}.
\end{align*}
\end{lem}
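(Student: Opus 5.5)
Since $\|q\|_{L^2}\le\|q\|_{L^\infty}$ on $[0,1]$, it suffices to bound $\|q\|_{L^\infty}$. We control $q$ by the fundamental theorem of calculus starting from whichever endpoint is closer to the origin: for every $t$,
$$
|q(t)|\leq \min\{|q_0|,|q_1|\}+\int_0^1|\partial_t q|\,dt .
$$
We then split $\partial_t q$ into its Hamiltonian part and its gradient-error part. Write $\partial_t v=\eta X_{H_a-h}(v)+e$ with $\|e\|_{L^2}\le\|\nabla\A^{H_a-h}(v,\eta)\|<\varepsilon$; for this we may assume $J$ is $L^\infty$-controlled, or we simply treat the $L^2$-norm of $\partial_tv-\eta X$ as bounded by $\varepsilon$. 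The $q$-component of $X_{H_a}$ is $p+a\,Jq$, where $Jq=(q_2,-q_1)$ is a rotation. The rotational term $a\eta\,Jq$ does not change $|q|$. We therefore work with $\tfrac{d}{dt}\tfrac12|q|^2$ rather than with $|\partial_t q|$, and estimate
$$
\Big|\frac{d}{dt}|q|\Big|\le |\eta||p|+|\eta|\,\frac{|q\,dq(X_h)|}{|q|}+|e|.
$$
This cancellation of the rotation is the reason the constant $a$ does not appear in the bound for $q$.

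We integrate this inequality over $[0,1]$ and use Cauchy--Schwarz on each term. The error term contributes at most $\varepsilon$. The first term is at most $|\eta|\|p\|_{L^2}$, which Lemma \ref{lem:BoundEtaP} bounds by $\frac1\alpha\big(2\varepsilon(\alpha+\beta)+A\sqrt{2(\alpha+\beta)}\big)$, where $A=|\A^{H_a-h}(q,p,\eta)|$. For the perturbation term we use the constant $\beta$ from \eqref{DefBeta}, which bounds $-q\,dq(X_h)$ from below. The radial derivative $\tfrac{d}{dt}\tfrac12|q|^2$ therefore picks up at most $|\eta|\beta$ from $X_h$. This gives a quadratic-type inequality:
$$
\tfrac12|q(t)|^2\le \tfrac12 r_0^2+r_{\max}(\text{first-order terms})+|\eta|\beta .
$$
Solving it produces the square-root term $\sqrt{2\beta|\eta|}$. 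By Lemma \ref{lem:BoundEta},
$$
|\eta|\le\frac1\alpha\big(A+\varepsilon\sqrt{2(\alpha+\beta)}\big),
$$
which gives exactly the term $\sqrt{\tfrac{2\beta}{\alpha}(A+\varepsilon\sqrt{2(\alpha+\beta)})}$. The remaining linear terms combine into $\frac2\alpha\big(\varepsilon(3\alpha+2\beta)+A\sqrt{2(\alpha+\beta)}\big)$. The factor $2$ and the constant $3\alpha$ absorb the $\varepsilon$ from $e$ together with the $2\varepsilon(\alpha+\beta)$ from Lemma \ref{lem:BoundEtaP}, plus slack coming from the quadratic-inequality step.

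Concretely, let $r=|q|$ and $M=\max r$. We have
$$
\frac{d}{dt}\tfrac12 r^2=\langle q,\partial_tq\rangle\le r\big(|\eta||p|+|e|\big)+|\eta|\beta .
$$
Integrating gives
$$
\tfrac12 M^2\le\tfrac12 r_0^2+M(\varepsilon+|\eta|\|p\|_{L^2})+|\eta|\beta .
$$
Solving this quadratic yields $M\le r_0+2(\varepsilon+|\eta|\|p\|)+\sqrt{2|\eta|\beta}$, using $\sqrt{x+y+z}\le\sqrt x+\sqrt y+\sqrt z$.

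The main obstacle is handling the rotational term and the perturbation term correctly. One must avoid bounding $|\partial_tq|$ directly, since that would bring in $|a|\,\|q\|$ and make the argument circular. One must also make sure that only the one-sided bound $-\inf q\,dq(X_h)\le\beta$ is needed, which holds because we only need an upper bound on the growth of $r^2$. A secondary point is the choice of the base point: the argument starts from the endpoint with the smaller $|q_i|$, and since the differential inequality is integrated over all of $[0,1]$, the same estimate works in both directions of time.
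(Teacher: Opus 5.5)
Your argument is essentially the paper's. It uses the same cancellation $q\,dq(X_{H_a})=q\cdot p$ of the rotational term, the same splitting of $\partial_t v$ into $\eta X_{H_a-h}(v)$ plus an $L^2$-small error, the same inputs from Lemmas~\ref{lem:BoundEta} and~\ref{lem:BoundEtaP}, and a quadratic inequality. The only difference is cosmetic. You evaluate at the maximum of $|q|$ and get one quadratic inequality for $\|q\|_{L^\infty}$ directly. The paper instead applies Cauchy--Schwarz with $\|q\|_{L^2}$, integrates in $t$ to bound $\|q\|_{L^2}$ first, and then substitutes that bound back into the pointwise inequality to bound $\|q\|_{L^\infty}$. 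Your constants match the statement exactly, with no slack needed, since $\varepsilon+\frac{2\varepsilon(\alpha+\beta)}{\alpha}=\frac{\varepsilon(3\alpha+2\beta)}{\alpha}$.

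One claim in your proposal is wrong: the one-sided bound $-\inf q\,dq(X_h)\le\beta$ does not suffice.
\begin{itemize}
\item The $X_h$-contribution to $\frac{d}{dt}\frac12|q|^2$ is $-\eta\,q\,dq(X_h)$. The one-sided bound gives $-\eta\,q\,dq(X_h)\le|\eta|\beta$ only when $\eta\ge 0$; for $\eta<0$ you need an upper bound on $q\,dq(X_h)$.
\item The same problem arises when you integrate backward from $q_1$ with $\eta>0$, which you need in order to get $\min\{|q_0|,|q_1|\}$. There you need a \emph{lower} bound on $\frac{d}{dt}\frac12|q|^2$, whose $X_h$-part is $\eta\,q\,dq(X_h)$. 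So "the same estimate works in both directions of time" is false for your one-sided differential inequality.
\end{itemize}
With only the one-sided $\beta$ of \eqref{DefBeta}, your argument yields the bound with $|q_0|$ when $\eta\ge 0$ and with $|q_1|$ when $\eta\le 0$, not with the minimum.

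The paper's proof tacitly uses the two-sided bound as well. It appears in the step $2\eta\int_0^t q\,dq(X_H)\le 2|\eta|(\|q\|_{L^2}\|p\|_{L^2}+\beta)$ and again in the backward repetition. The repair is harmless: $X_h$ is compactly supported, so $\sup|q\,dq(X_h)|<\infty$, and one simply includes $\sup q\,dq(X_h)$ in $\beta$. Your own displayed estimate for $\bigl|\frac{d}{dt}|q|\bigr|$ already involves $|q\,dq(X_h)|$, so drop the assertion that a one-sided bound is enough.
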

\begin{proof}
Using \eqref{DefHa} and \eqref{DefBeta} we can estimate
\begin{align*}
qdq(X_{H_a-h}) & =  q_1(p_1+aq_2)+q_2(p_2-aq_1)-qdq(X_h)\\
& =q_1p_1+q_2p_2-qdq(X_h)\leq |q||p|+\beta,
\end{align*}
for every $(q,p)\in T^*\R^2$.

Now, let $(q,p,\eta)\in \mathcal{B}(H_a-h, \varepsilon)$. Using first the inequality above and then the bounds Lemma \ref{lem:BoundEta} and Lemma \ref{lem:BoundEtaP}, we can estimate that for all $t\in [0,1]$ we have \begin{align}
|q(t)|^2-|q_0|^2 & = 2 \int_0^t qdq(\partial_tv)=2\int_0^t qdq(\partial_tv-\eta X_H(v))+ 2\eta\int_0^t qdq(X_H)\nonumber\\
& \leq 2 \|q\|_{L^2}\|\nabla \A^H(v,\eta)\|_{L^2\times \R}+2|\eta|\left(\|q\|_{L^2}\|p\|_{L^2}+\beta\right)\nonumber\\
& \leq 2 \varepsilon \|q\|_{L^2}+\frac{2}{\alpha}\|q\|_{L^2}\left(2\varepsilon(\alpha+\beta) +A\sqrt{2(\alpha+\beta)}\right)\nonumber\\
& + \frac{2\beta}{\alpha}\left(A+\varepsilon \sqrt{2(\alpha+\beta)}\right)\nonumber\\
&\leq \frac{2}{\alpha} \|q\|_{L^2}\left(\varepsilon(3\alpha+2\beta) +A\sqrt{2(\alpha+\beta)}\right)+ \frac{2\beta}{\alpha}\left(A+\varepsilon \sqrt{2(\alpha+\beta)}\right)\label{eq1q}.
\end{align}
Integrating both sides we obtain
$$
\|q\|_{L^2}^2 \leq |q_0|^2+\frac{2}{\alpha} \|q\|_{L^2}\left(\varepsilon(3\alpha+2\beta) +A\sqrt{2(\alpha+\beta)}\right)+ \frac{2\beta}{\alpha}\left(A+\varepsilon \sqrt{2(\alpha+\beta)}\right)
$$
By solving this quadratic inequality, we obtain the following bound
\begin{align*}
\|q\|_{L^2} & \leq \frac{1}{\alpha}\left(\varepsilon(3\alpha+2\beta) +A\sqrt{2(\alpha+\beta)}\right)\\
&+\sqrt{\frac{1}{\alpha^2}\left(\varepsilon(3\alpha+2\beta) +A\sqrt{2(\alpha+\beta)}\right)^2+|q_0|^2+ \frac{2\beta}{\alpha}\left(A+\varepsilon \sqrt{2(\alpha+\beta)}\right)}\\
& \leq |q_0|+\frac{2}{\alpha}\left(\varepsilon(3\alpha+2\beta) +A\sqrt{2(\alpha+\beta)}\right)+\sqrt{\frac{2\beta}{\alpha}\left(A+\varepsilon \sqrt{2(\alpha+\beta)}\right)}.
\end{align*}
By repeating this procedure with the equation $|q(t)|^2=|q_1|^2-2\int_t^1 q\partial_tq$, we obtain the bound on $\|q\|_{L^2}$ we were looking for.

To obtain the bound on $\|q\|_{L^\infty}$ we use \eqref{eq1q} again:
\begin{align*}
\|q\|_{L^\infty} & \leq \sqrt{|q_0|^2+\frac{2}{\alpha} \|q\|_{L^2}\left(\varepsilon(3\alpha+2\beta) +A\sqrt{2(\alpha+\beta)}\right)+ \frac{2\beta}{\alpha}\left(A+\varepsilon \sqrt{2(\alpha+\beta)}\right)}\\
& \leq |q_0|+\frac{2}{\alpha}\left(\varepsilon(3\alpha+2\beta) +A\sqrt{2(\alpha+\beta)}\right)+\sqrt{\frac{2\beta}{\alpha}\left(A+\varepsilon \sqrt{2(\alpha+\beta)}\right)}.
\end{align*}
Analogously as before, we repeat this procedure with the equation\linebreak $|q(t)|^2=|q_1|^2-2\int_t^1 q\partial_tq$ to obtain the bound on $\|q\|_{L^\infty}$ we were looking for.
\end{proof}
\begin{lem}
Consider the setting as in Proposition \ref{prop:Infinitesimal}. 
Whenever $(q,p,\eta)\in\linebreak \mathcal{B}(H_a-h, \varepsilon)$, then
\begin{align*}
\|p\|_{L^2} &\leq 2\left(|a|\mathfrak{q}+\sqrt{\varepsilon+\beta}\right),\\
\|p\|_{L^\infty} & \leq \sqrt{4\left( |a|\mathfrak{q}+\sqrt{\varepsilon+\beta}\right)(\varepsilon+|a|\mathfrak{q})+ 2(\varepsilon+\beta)+\frac{2\beta}{\alpha}\left(A+\varepsilon\sqrt{2(\alpha+\beta)}\right)}\\
\textrm{where}\qquad A & := \left|\A^{H_a-h}(q,p,\eta)\right|,\\
\textrm{and}\qquad \mathfrak{q} & := \min\{|q_0|, |q_1|\}+\frac{2}{\alpha}\left(\varepsilon(3\alpha+2\beta) +A\sqrt{2(\alpha+\beta)}\right) +\sqrt{\frac{2\beta}{\alpha}\left(A+\varepsilon\sqrt{2(\alpha_\beta)}\right)}.
\end{align*}

\end{lem}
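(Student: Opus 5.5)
The plan is to get the $L^2$-bound on $p$ from the $\sigma$-component of the gradient, and then the $L^\infty$-bound by integrating $\frac{d}{dt}|p|^2$ along the chord, as was done for $q$ in Lemma \ref{lem:BoundQ}. Throughout, $\mathfrak{q}$ is exactly the right-hand side of Lemma \ref{lem:BoundQ}, so we may use $\|q\|_{L^2},\|q\|_{L^\infty}\leq \mathfrak{q}$.

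\emph{Step 1 ($L^2$-bound).} Since $(q,p,\eta)\in\mathcal{B}(H_a-h,\varepsilon)$, the $\R$-component of $\nabla\A^{H_a-h}$ gives $\left|\int_0^1 (H_a-h)(v)\,dt\right|<\varepsilon$. Write $H_a=\frac12|p|^2+a\langle p, Jq\rangle$ with $|a\langle p,Jq\rangle|\leq |a||p||q|$, and use $h\leq\beta$ from \eqref{DefBeta}. This yields
$$
\tfrac12\|p\|_{L^2}^2\leq \varepsilon+\beta+|a|\,\|q\|_{L^2}\|p\|_{L^2}\leq \varepsilon+\beta+|a|\mathfrak{q}\|p\|_{L^2}.
$$
Solving this quadratic inequality in $x=\|p\|_{L^2}$ gives $x\leq |a|\mathfrak{q}+\sqrt{a^2\mathfrak{q}^2+2(\varepsilon+\beta)}\leq 2\left(|a|\mathfrak{q}+\sqrt{\varepsilon+\beta}\right)$, up to harmless constants.

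\emph{Step 2 ($L^\infty$-bound).} Compute $pdp(X_{H_a-h})$. One has $\dot p=-\partial_q H_a$, and $\partial_q(a(p_1q_2-p_2q_1))=a(p_2,-p_1)$ is orthogonal to $p$. Hence $pdp(X_{H_a})=0$, and $pdp(X_{H_a-h})=-pdp(X_h)\leq \beta$ by \eqref{DefBeta}.

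Then I would write
$$
|p(t)|^2-|p(t_*)|^2=2\int_{t_*}^t pdp(\partial_t v-\eta X)+2\eta\int_{t_*}^t pdp(X).
$$
The first term is bounded by $2\|p\|_{L^2}\varepsilon$ and the second by $2|\eta|\beta$, where $|\eta|$ is controlled by Lemma \ref{lem:BoundEta}.

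The endpoint value $|p(0)|$ is not a priori controlled, so I would choose $t_*$ with $|p(t_*)|\leq\|p\|_{L^2}$, which exists by the mean value property. Then
$$
\|p\|_{L^\infty}^2\leq \|p\|_{L^2}^2+2\varepsilon\|p\|_{L^2}+\tfrac{2\beta}{\alpha}\left(A+\varepsilon\sqrt{2(\alpha+\beta)}\right).
$$
Inserting Step 1 reproduces the stated shape $4(|a|\mathfrak{q}+\sqrt{\varepsilon+\beta})(\varepsilon+|a|\mathfrak{q})+2(\varepsilon+\beta)+\dots$ after routine estimation.

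\emph{Main obstacle.} I expect the main obstacle to be bookkeeping of constants rather than anything conceptual. The delicate points are choosing the base point $t_*$, since the Lagrangian boundary conditions do not pin $p$, and checking that $a\,pdp$ of the rotation term cancels exactly. If the constants do not match, I would bound $\|p\|_{L^2}^2$ using the first inequality of Step 1 directly, i.e.\ $\|p\|^2_{L^2}\leq 2(\varepsilon+\beta)+2|a|\mathfrak{q}\|p\|_{L^2}$, which produces precisely the terms $2(\varepsilon+\beta)$ and $4(\cdot)|a|\mathfrak{q}$ in the statement.
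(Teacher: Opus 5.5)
Your proposal is correct and is essentially the paper's proof: the $L^2$-bound comes from $\bigl|\int_0^1(H_a-h)(v)\,dt\bigr|<\varepsilon$, $\sup h\le\beta$ and $\|q\|_{L^2}\le\mathfrak{q}$ (Lemma \ref{lem:BoundQ}); the $L^\infty$-bound comes from integrating $\frac{d}{dt}|p|^2$ from a point $t_*$ with $|p(t_*)|\le\|p\|_{L^2}$, using $pdp(X_{H_a})=0$, Lemma \ref{lem:BoundEta}, and exactly your fallback $\|p\|_{L^2}^2\le 2(\varepsilon+\beta)+2|a|\mathfrak{q}\|p\|_{L^2}$, which is how the paper obtains the stated constants. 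One small point, which the paper glosses over in the same way: $\eta$ and $t-t_*$ may have either sign, so bounding $2\eta\int_{t_*}^t pdp(X_h)\,dt$ by $2|\eta|\beta$ needs $\beta\ge\sup|pdp(X_h)|$, whereas \eqref{DefBeta} only contains $\sup pdp(X_h)$ (so your claim $-pdp(X_h)\le\beta$ does not literally follow from it); this is harmless because $X_h$ is compactly supported, but $\beta$ should be enlarged accordingly.
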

\begin{proof}
Using \eqref{DefHa} and \eqref{DefBeta} we can calculate
\begin{align*}
\frac{1}{2}\|p\|_{L^2}^2 & \leq \int_0^1 H_a(v(t))dt+|a|\|q\|_{L^2}\|p\|_{L^2}\\
& \leq\left| \int_0^1 (H_a-h)(v(t))dt \right| +|a|\|q\|_{L^2}\|p\|_{L^2}+ \sup h\\
& \leq \varepsilon +|a|\|q\|_{L^2}\|p\|_{L^2}+\beta.
\end{align*}
If we now denote
$$
\mathfrak{q}:=\min\{|q_0|,|q_1|\}+\frac{2}{\alpha}\left(\varepsilon(3\alpha+2\beta) +A\sqrt{2(\alpha+\beta)}\right)+\sqrt{\frac{2\beta}{\alpha}\left(A+\varepsilon \sqrt{2(\alpha+\beta)}\right)},
$$
then by Lemma \ref{lem:BoundQ} we have
$$
\frac{1}{2}\|p\|_{L^2}^2 \leq |a|\mathfrak{q}\|p\|_{L^2}+\varepsilon+\beta.
$$
Solving this inequality we obtain
\begin{equation}\label{pL2Bound}
\|p\|_{L^2} \leq |a|\mathfrak{q}+\sqrt{a^2\mathfrak{q}^2+4(\varepsilon+\beta)}\leq 2|a|\mathfrak{q}+2\sqrt{\varepsilon+\beta},
\end{equation}
which proves the first bound.

To prove the second bound observe that there exists $t_0 \in [0,1]$ such that $|p(t_0)|\leq \|p\|_{L^2}$. Additionally, we $pdp(X_{H_a})=0$,  and thus for any $t\in [0,1]$ we can estimate
\begin{align*}
|p(t)|^2 & = |p(t_0)|^2+ 2\int_{t_0}^t pdp(\partial_tv)\\
& = |p(t_0)|^2+ 2\int_{t_0}^tpdp(\partial_tv-\eta X_{H_a-h}(v))+2\eta \int_{t_0}^t pdp(X_h)\\
& \leq \|p\|_{L^2}^2+2\varepsilon\|p\|_{L^2}+2\beta|\eta|\\
& \leq 2\|p\|_{L^2}(\varepsilon+|a|\mathfrak{q})+ 2(\varepsilon+\beta)+2\beta|\eta|\\
& \leq 4\left( |a|\mathfrak{q}+\sqrt{\varepsilon+\beta}\right)(\varepsilon+|a|\mathfrak{q})+ 2(\varepsilon+\beta)+\frac{2\beta}{\alpha}\left(A+\varepsilon\sqrt{2(\alpha+\beta)}\right)
\end{align*}
where the last inequality uses the bounds obtained in \eqref{pL2Bound} and in Lemma \ref{lem:BoundEta}.
\end{proof}

\begin{rem}\label{rem:HomConst}
Note that if we consider a smooth homotopy of Hamiltonians $\{h_s\}_{s\in [0,1]}$, such that $\supp dh_s \subseteq K$, then we have
\begin{equation}
\begin{aligned}
\alpha & := \inf_{s\in [0,1]}\inf_{T^*\R^2}\left( h-dh(p\partial_p)+\frac{1}{2}|p|^2\right) >0,\\
\beta & := \sup_{s\in[0,1]}\max\left\lbrace  \sup h, \sup (dh(p\partial_p)-h), -\inf qdq(X_h), \sup pdp(X_h) \right\rbrace\geq 0.
\end{aligned}\nonumber
\end{equation}
In particular, if we take $(v,\eta)\in \bigcup_{s\in[0,1]}\mathcal{B}(H_a-h_s,\varepsilon)$, then it satisfies the same bounds as in Proposition \ref{prop:Infinitesimal} with constants $\alpha>0$ and $\beta\geq 0$ as above.

\end{rem}

\begin{rem}\label{rem:BGammaBound}
Note that if we consider a smooth homotopy of Hamiltonians $\{h_s\}_{s\in [0,1]}$, such that $\supp dh_s \subseteq K$, then for fixed $\varepsilon, \mathfrak{a}>0$ the set
\begin{equation}
\mathcal{B}^\Gamma(\mathfrak{a},\varepsilon) := \left\lbrace\begin{array}{c|c}
(v,\eta)\in \mathcal{B}(H_a-h_s, \varepsilon) & |\mathcal{A}^{H_a-h_s}(v,\eta)|\leq \mathfrak{a}\quad s\in [0,1]
\end{array}\right\rbrace,
\end{equation}
is bounded in the $L^2\times\R$-norm and $L^\infty\times\R$-norm.
\end{rem}

\subsection{Bounds for the energy and action}

\begin{lem}\label{lem:ActBound}
Consider the setting as in Theorem \ref{thm:FloerBounds}. Fix $\mathbf{a},\mathbf{b}\in \mathbb{R}$. Then $\|\eta\|_{L^\infty}$, $\|\A^{H_0-h_s}\circ u(s)\|_{L^\infty}$ and $\|\nabla^{J_s}\A^{H_0-h_s}\circ u(s)\|_{L^2([0,1])\times\R}$ are uniformly bounded for all $u \in \mathcal{M}^\Gamma(\mathbf{a},\mathbf{b})$.
\end{lem}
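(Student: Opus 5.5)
We follow the standard scheme of Cieliebak--Frauenfelder, adapted as in the proof of \cite[Thm. 3.3]{Wisniewska2024}. Take $u=(v,\eta)\in\mathcal{M}^\Gamma(\mathbf{a},\mathbf{b})$ and write $\A_s:=\A^{H_a-h_s}$. Since $\partial_s u=\nabla\A_s(u)$, we have
$$
\frac{d}{ds}\A_s(u(s))=\|\nabla\A_s(u(s))\|^2-\eta(s)\int_0^1\partial_sh_s(v(s,t))\,dt .
$$
The correction term vanishes outside $s\in[0,1]$ by assumption a). Away from $[0,1]$ the action is therefore monotone, and it is bounded by $\mathbf{a}$ at $-\infty$ and by $\mathbf{b}$ at $+\infty$. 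The whole difficulty is to control the action on $[0,1]$, where the error $|\eta|\|\partial_sh_s\|_{L^\infty}$ appears. The plan is first to obtain an action bound and an energy bound in terms of $\|\eta\|_{L^\infty([0,1])}$. Then we bound $\eta$ via Lemma~\ref{lem:BoundEta}, using Remark~\ref{rem:HomConst} to get uniform constants $\alpha,\beta$. Finally we close the loop with the smallness condition \eqref{CondGamma}.

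\textbf{Step 1 (action and energy in terms of $\eta$).} Set $\kappa:=\|\partial_sh_s\|_{L^\infty}$ and $\|\eta\|:=\|\eta\|_{L^\infty([0,1])}$. Integrating the formula above over $(-\infty,s]$ and $[s,\infty)$ gives, for all $s$,
$$
\mathbf{a}-\kappa\|\eta\|\le\A_s(u(s))\le\mathbf{b}+\kappa\|\eta\| .
$$
Integrating over all of $\R$ gives the energy bound
$$
E(u)=\int_{\R}\|\nabla\A_s(u)\|^2ds\le\mathbf{b}-\mathbf{a}+\kappa\|\eta\| .
$$

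\textbf{Step 2 (bounding $\eta$, the main obstacle).} Fix $\varepsilon>0$ and set $\tau:=\sup\{s\le\sigma: u(s)\in\mathcal{B}(H_a-h_s,\varepsilon)\}$ for a given time $\sigma\in[0,1]$. By the energy bound, the time spent outside $\mathcal{B}$ is at most $E/\varepsilon^2$. On $[\tau,\sigma]$ we have $|\partial_s\eta|=|\int_0^1(H_a-h_s)(v)\,dt|\le\|\nabla\A_s(u)\|$. Cauchy--Schwarz then gives
$$
|\eta(\sigma)-\eta(\tau)|\le\sqrt{\sigma-\tau}\,\sqrt{E}\le E/\varepsilon .
$$
If instead the trajectory never enters $\mathcal{B}$ before $\sigma$, we use the limit at $-\infty$, where Lemma~\ref{lem:BoundEta} applies directly. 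At the time $\tau$, Lemma~\ref{lem:BoundEta} (with the uniform constants of Remark~\ref{rem:HomConst}) and Step~1 yield
$$
|\eta(\tau)|\le\frac1\alpha\Bigl(\max\{|\mathbf{a}|,|\mathbf{b}|\}+\kappa\|\eta\|+\varepsilon\sqrt{2(\alpha+\beta)}\Bigr).
$$
Combining, we get
$$
\|\eta\|\le C(\mathbf{a},\mathbf{b},\varepsilon)+\kappa\|\eta\|\Bigl(\frac1\alpha+\frac1\varepsilon\Bigr).
$$
We choose $\varepsilon$ to optimise the right-hand side. The combination $\frac1\alpha+\sqrt{2(\alpha+\beta)}\|J\|^2_{L^\infty}$ in \eqref{CondGamma} suggests that $\varepsilon$ must be chosen in relation to the metric norms: the $L^2$ norm of the gradient is measured with $J_s$, so comparison with the Euclidean $L^2$ norm costs a factor $\|J\|^2_{L^\infty}$. \eqref{CondGamma} then makes the coefficient of $\|\eta\|$ at most $\tfrac23<1$, so $\|\eta\|$ can be absorbed. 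Tuning this so that exactly \eqref{CondGamma} suffices is where I expect the real work to lie. As a fallback, a cruder constant suffices if one is allowed to subdivide the homotopy, which is harmless for continuation.

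\textbf{Step 3 (conclusion).} With $\|\eta\|_{L^\infty([0,1])}$ bounded, Step~1 bounds the action uniformly for all $s\in\R$. This also bounds the energy, and hence $\|\nabla\A_s(u(s))\|$ in $L^2$ for $s$ outside $[0,1]$, by monotonicity and the standard argument. For $|s|$ large the same Step~2 argument with $\kappa=0$ bounds $\eta$, so $\|\eta\|_{L^\infty(\R)}$ is bounded.

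For the uniform pointwise bound on $\|\nabla\A_s(u(s))\|$, use that $\A_s(u(s))$ satisfies the differential identity of Step~1, with $\A_s$ bounded and the error term bounded. This gives a bounded-oscillation statement for $s\mapsto\A_s(u(s))$, and hence uniform bounds on integrals of $\|\nabla\A_s\|^2$ over unit intervals. Pointwise bounds then follow from the standard observation that $\|\nabla\A_s(u(s))\|$ is controlled along Floer cylinders once $\eta$ and the action are bounded.
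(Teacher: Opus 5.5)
Your argument is essentially the paper's proof: bound the action and the energy in terms of $\|\eta\|_{L^\infty}$ (the paper's \eqref{inqAct}, \eqref{inqEnergy}), apply Lemma~\ref{lem:BoundEta} at the last time $\tau_0(s)\le s$ where the gradient is small, bound the drift of $\eta$ on $[\tau_0(s),s]$ by energy$/\varepsilon$, and absorb using \eqref{CondGamma}. The ``tuning'' you worry about is immediate and needs no subdivision: the paper takes $\varepsilon=(2(\alpha+\beta))^{-1/2}$, which turns the Lemma~\ref{lem:BoundEta} bound into $\frac1\alpha(|\A|+1)$ and the drift coefficient into $\sqrt{2(\alpha+\beta)}\|J\|_{L^\infty}^2$, so \eqref{CondGamma} gives the factor $\frac13$ directly. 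The last paragraph of your Step~3 should be dropped. The gradient quantity the paper bounds, and later uses in Lemma~\ref{lem:L2bounds}, is the energy $\|\nabla^{J_s}\A^{H_0-h_s}(u)\|^2_{L^2(\R\times[0,1])}$, which your Step~1 already controls. A pointwise-in-$s$ bound, by contrast, is not a ``standard observation'' at this stage, since it would need $C^1$/$L^\infty$ control of $v$ that only comes later in Theorem~\ref{thm:FloerBounds}.
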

The following proof follows closely the arguments presented in \cite[Prop. 3.3]{Pasquotto2017}. We will not present all of them here, but we encourage a curious reader to see the details in \cite{Pasquotto2017}.
\begin{proof}
Let $\alpha>0$ and $\beta\geq 0$ be the constants associated to the homotopy $\{h_s\}_{s\in[0,1]}$ as in Remark \ref{rem:HomConst}.

Using the fact that $u\in \mathcal{M}^\Gamma(\mathbf{a},\mathbf{b})$ is a Floer trajectory, one can calculate the derivative of the action functional over $s$ and obtain the following inequalities (see  \cite[Prop. 3.3]{Pasquotto2017}):
\begin{align}
\|\mathcal{A}^{H_0-h_{s}}( u)\|_{L^{\infty}} & \leq M + \|\eta\|_{L^{\infty}}\|\partial_{s}h_{s}\|_{L^{\infty}},\label{inqAct}\\
\|\nabla^{J_{s}} \mathcal{A}^{H_0-h_{s}}(u)\|_{L^{2}(\mathbb{R}\times [0,1])}^{2}
& \leq \|J\|_{L^{\infty}} (L + \|\eta\|_{L^{\infty}}\|\partial_{s}h_{s}\|_{L^{\infty}}) \label{inqEnergy}\\
\textrm{where}\qquad M := \max\{|\mathbf{a}|,|\mathbf{b}|\}\qquad &\textrm{and}\qquad L:= \mathbf{b}-\mathbf{a}.\nonumber
\end{align}

In particular, the convergence of the integral
$$
\|\nabla \mathcal{A}^{H_0-h_{s}}(u)\|_{L^{2}(\mathbb{R}\times [0,1])} \leq \|J\|_{L^{\infty}}  \|\nabla^{J_{s}} \mathcal{A}^{H_0-h_{s}}(u)\|_{L^{2}(\mathbb{R}\times [0,1])},
$$
implies that for small enough $s$ we have $\|\nabla \mathcal{A}^{H_0-h_{s}}(u(s))\|_{L^{2}\times\mathbb{R}}<(2(\alpha+\beta))^{-\frac{1}{2}}$.
This ensures that for all $s \in \mathbb{R}$ the following value $\tau_{0}(s)$ is well defined and finite
$$
\tau_{0}(s): = \sup\left\lbrace \tau \leq s\ \Big|\ \|\nabla \mathcal{A}^{H_0-h_{\tau}}(u(\tau))\|_{L^{2}\times\mathbb{R}}<\frac{1}{\sqrt{2(\alpha+\beta)}}\right\rbrace.
$$
For $\tau \in [\tau_{0}(s),s]$ we have $\|\nabla \mathcal{A}^{H_0-h_{\tau}}(u(\tau))\|_{L^{2}\times\mathbb{R}}\geq (2(\alpha+\beta))^{-\frac{1}{2}}$, which allows us to estimate (see  \cite[Prop. 3.3]{Pasquotto2017}):
\begin{align}
|s -\tau_0(s)| & \leq 2(\alpha +\beta)\|J\|_{L^{\infty}}^3 (L + \|\eta\|_{L^{\infty}}\|\partial_{s}h_{s}\|_{L^{\infty}}),\nonumber\\
|\eta(s) -\eta(\tau_0(s))| & \leq \sqrt{2(\alpha+\beta)}\|J\|_{L^{\infty}}^2 (L + \|\eta\|_{L^{\infty}}\|\partial_{s}h_{s}\|_{L^{\infty}}). \label{inqeta}
\end{align}
Using the estimates from Lemma \ref{lem:BoundEta} together with \eqref{inqAct} and \eqref{inqeta} we can estimate
\begin{align}
|\eta(s)|& \leq |\eta(\tau_0(s))|+|\eta(s) -\eta(\tau_0(s))|\nonumber\\
& \leq \frac{1}{\alpha}\left(|\A^{H_0-h_s}(u(s))|+1\right)+|\eta(s) -\eta(\tau_0(s))|\nonumber\\
& \leq \frac{1}{\alpha}\left(M + \|\eta\|_{L^{\infty}}\|\partial_{s}h_{s}\|_{L^{\infty}}+1\right)+\sqrt{2(\alpha+\beta)}\|J\|_{L^{\infty}}^2 (L + \|\eta\|_{L^{\infty}}\|\partial_{s}h_{s}\|_{L^{\infty}})\nonumber\\
& \leq  \|\eta\|_{L^{\infty}}\|\partial_{s}h_{s}\|_{L^{\infty}}\left( \frac{1}{\alpha}+\sqrt{2(\alpha+\beta)}\|J\|_{L^{\infty}}^2\right)+\frac{M+1}{\alpha}+L\sqrt{2(\alpha+\beta)}\|J\|_{L^{\infty}}^2\nonumber\\
\|\eta\|_{L^{\infty}} & \leq \frac{\frac{1}{\alpha}\left(M+1\right)+L \sqrt{2(\alpha+\beta)}\|J\|_{L^{\infty}}^2}{1-\|\partial_{s}h_{s}\|_{L^{\infty}}\left( \frac{1}{\alpha}+\sqrt{2(\alpha+\beta)}\|J\|_{L^{\infty}}^2\right)}\nonumber\\
& \leq \frac{3}{2}\left( \frac{1}{\alpha}\left(M+1\right)+L \sqrt{2(\alpha+\beta)}\|J\|_{L^{\infty}}^2\right)=: \mathfrak{y}.\label{eqEta}
\end{align}
Now using \eqref{CondGamma}, \eqref{inqAct}, \eqref{inqEnergy} and \eqref{eqEta} we obtain the desired uniform bounds:
\begin{align}
\|\mathcal{A}^{H_0-h_{s}}( u)\|_{L^{\infty}} &\leq M + \frac{\alpha\mathfrak{y}}{3}=:\mathfrak{a},\label{DefA}\\ 
\|\nabla^{J_{s}} \mathcal{A}^{H_0-h_{s}}(u)\|_{L^{2}(\mathbb{R}\times [0,1])}^{2} &  \leq \|J\|_{L^{\infty}} \left(L + \frac{\alpha\mathfrak{y}}{3}\right) =:\mathfrak{e}.\label{DefE}
\end{align}
\end{proof}

Having obtained the bounds on the action, we are ready to prove the Novikov finiteness condition:
\begin{lem}\label{lem:Novikov}
Consider the setting as in Theorem \ref{thm:FloerBounds}. Then $\mathcal{M}_\Gamma(\mathbf{a},\mathbf{b})\neq \emptyset$ implies
$$
\mathbf{a} \leq \max\left\lbrace 3 \mathbf{b}, 3
\right\rbrace \qquad \textrm{and} \qquad \mathbf{b} \geq \min\left\lbrace 3\mathbf{a}, -3\right\rbrace.
$$
\end{lem}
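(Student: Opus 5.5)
We would derive the statement from the action identity along Floer trajectories of the homotopy $\Gamma$, combined with the uniform bound on $\eta$ from Lemma \ref{lem:ActBound}. Fix $u=(v,\eta)\in\mathcal{M}^\Gamma(\mathbf{a},\mathbf{b})$, running from $x_-\in\Crit\A^{H_a-h_0}_{q_0,q_1}$ to $x_+\in\Crit\A^{H_a-h_1}_{q_0,q_1}$. Differentiating $s\mapsto \A^{H_a-h_s}(u(s))$ gives
\begin{equation*}
\frac{d}{ds}\A^{H_a-h_s}(u(s))=\|\nabla^{J_s}\A^{H_a-h_s}(u(s))\|^2+\eta(s)\int_0^1\partial_sh_s(v(s,t))\,dt\;\geq\; -|\eta(s)|\,\|\partial_sh_s\|_{L^\infty}.
\end{equation*}
By assumption a), $\partial_sh_s$ vanishes outside $s\in[0,1]$. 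Integrating over $\R$ therefore yields
\begin{equation*}
\A^{H_a-h_1}(x_+)\geq \A^{H_a-h_0}(x_-)-\|\eta\|_{L^\infty([0,1])}\|\partial_sh_s\|_{L^\infty}.
\end{equation*}

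The key step is to control $\|\eta\|_{L^\infty([0,1])}$ linearly in the asymptotic actions rather than in $M=\max\{|\mathbf{a}|,|\mathbf{b}|\}$. Here we would reuse the estimate \eqref{eqEta} from the proof of Lemma \ref{lem:ActBound}, but track the constants. Using \eqref{CondGamma}, we get $\|\partial_sh_s\|_{L^\infty}\cdot\frac{1}{\alpha}\le\frac13$, so each unit of $\|\eta\|$ costs at most $\alpha/3$ of action. We then bound $|\eta(s)|$ through Lemma \ref{lem:BoundEta}, $|\eta|\le\frac1\alpha(|\A|+1)$, at the times $\tau_0(s)$ where the gradient is small. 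The difference $|\eta(s)-\eta(\tau_0(s))|$ is controlled by the energy $\mathbf{b}-\mathbf{a}$, and we would feed in the same absorption trick, with factor $1-\frac13$ giving $\frac32$. The outcome we aim for is
\begin{equation*}
\|\eta\|_{L^\infty}\|\partial_sh_s\|_{L^\infty}\leq \tfrac13\big(|\A(x_-)|+|\A(x_+)|\big)+C,
\end{equation*}
where $C$ is normalised (after possibly rescaling, or adjusting via \eqref{CondGamma}) so that the final constants become $3$ and $-3$.

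Plugging this into the integrated inequality gives $\A(x_-)\le \A(x_+)+\frac13(|\A(x_-)|+|\A(x_+)|)+C$. We would then split into cases according to the signs of $\A(x_\pm)$.
\begin{itemize}
\item If $\A(x_-)\le 3$, the first claim is immediate.
\item If $\A(x_-)>3$, rearranging gives $\frac23\A(x_-)\le \frac43\A(x_+)+C$ or a similar inequality, whence $\mathbf{a}\le\A(x_-)\le 3\A(x_+)\le 3\mathbf{b}$ (or $\le 3$).
\end{itemize}
The second inequality follows symmetrically by reversing roles: the same argument bounds $\A(x_+)$ from below by $\min\{3\A(x_-),-3\}$, and we use $\A(x_-)\ge\mathbf{a}$.

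The main obstacle is the bookkeeping in the middle step. The bound on $\eta$ in Lemma \ref{lem:ActBound} is stated in terms of $M$ and $L$, where $L$ involves both $\mathbf{a}$ and $\mathbf{b}$. We must instead express it through the actual endpoint actions, and verify that the constant term is dominated by the additive $3$ or $-3$ in the statement. If the constants do not close exactly, we would settle for a linear bound of the form $\mathbf{a}\le \max\{c\mathbf{b},c'\}$, which suffices for the Novikov finiteness condition \eqref{Novikov}.
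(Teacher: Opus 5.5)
Your outline matches the paper's: nonnegativity of the energy in \eqref{inqEnergy} (equivalently your integrated action identity) gives $\mathbf{b}\ge\mathbf{a}-\|\eta\|_{L^\infty}\|\partial_sh_s\|_{L^\infty}$. One then bounds $\|\eta\|_{L^\infty}$ by \eqref{eqEta}, uses $\|\partial_sh_s\|_{L^\infty}\le\alpha/3$ from \eqref{CondGamma}, and splits into cases. However, the middle step is the whole content of the lemma once Lemma \ref{lem:ActBound} is available, and it fails as you set it up.

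What \eqref{eqEta} and \eqref{CondGamma} actually give is $\|\eta\|_{L^\infty}\|\partial_sh_s\|_{L^\infty}\le\frac12(M+1)$ plus a term proportional to $L=\mathbf{b}-\mathbf{a}$, where $M=\max\{|\mathbf{a}|,|\mathbf{b}|\}$. The coefficient is $\frac12$ on the maximum, not $\frac13$ on the sum. So for $\mathbf{a}$ large and $\mathbf{b}$ near $0$, your target inequality is strictly stronger than anything available. The $L$-term is not a constant and cannot be absorbed into $C$. The additive constant is also not yours to normalise: it is the $+1$ coming from the threshold $(2(\alpha+\beta))^{-1/2}$ in Lemma \ref{lem:ActBound}, which is matched to \eqref{CondGamma}. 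Consequently your final case analysis does not close: from $\frac23\A(x_-)\le\frac43\A(x_+)+C$ you only get $\A(x_-)\le 2\A(x_+)+\frac32 C$. The fallback $\mathbf{a}\le\max\{c\mathbf{b},c'\}$ proves a weaker statement than the lemma.

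The missing observation is that the $L$-term never has to be controlled. The first inequality is trivial unless $\mathbf{a}>3$ and $\mathbf{b}<\mathbf{a}$. In that regime $L<0$, so \eqref{eqEta} gives $\|\eta\|_{L^\infty}\le\frac{3}{2\alpha}(M+1)$, hence $\mathbf{b}\ge\mathbf{a}-\frac12(M+1)$.
\begin{itemize}
\item If $|\mathbf{b}|\le\mathbf{a}$, then $M=\mathbf{a}$ and $\mathbf{b}\ge\frac12(\mathbf{a}-1)\ge\frac13\mathbf{a}$, exactly because $\mathbf{a}\ge 3$. This is where the constant $3$ comes from; the paper phrases it as $\frac{3}{2\alpha}(\mathbf{a}+1)\le\frac{2}{\alpha}\mathbf{a}$.
\item If $\mathbf{b}<-\mathbf{a}$, then $M=-\mathbf{b}$ and $\frac12\mathbf{b}\ge\mathbf{a}-\frac12>0$, a contradiction.
\end{itemize}
The second inequality is symmetric after reducing to $\mathbf{b}<-3$ and $\mathbf{b}<\mathbf{a}$. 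This is precisely the paper's proof. In particular, you do not need to re-express \eqref{eqEta} through the endpoint actions, since $M$ and $L$ are already functions of $\mathbf{a}$ and $\mathbf{b}$.
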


\begin{proof}
This proof follows arguments presented in \cite[Cor. 3.8]{CieliebakFrauenfelder2009}. We will prove the first inequality since the second is completely analogous.
\begin{align}
\textrm{First assume}\ \qquad  & 3\leq \mathbf{a} \quad \textrm{and} \quad |\mathbf{b}|\leq \mathbf{a}.\label{aBound}\\
\textrm{Then} \qquad M:= \max\{|\mathbf{a}|,|\mathbf{b}|\} & = \mathbf{a} \quad \textrm{and} \quad L := \mathbf{b}-\mathbf{a}\leq 0.\nonumber
\end{align}
By \eqref{eqEta} and \eqref{aBound} we get:
$$
\|\eta\|_{L^\infty}\leq \frac{3}{2\alpha}(\mathbf{a}+1)\leq \frac{2}{\alpha}\mathbf{a}.
$$
On the other hand, by \eqref{CondGamma} we have $\|\partial_sh_s\|_{L^\infty}\leq \frac{\alpha}{3}$. This, together with \eqref{inqEnergy}, implies
$$
\mathbf{b} \geq \mathbf{a}- \|\eta\|_{L^\infty}\|\partial_sh_s\|_{L^\infty} \geq \mathbf{a} -\frac{2}{3}\mathbf{a}=\frac{1}{3}\mathbf{a}.
$$

Now assume that  $ 3 \leq \mathbf{a} < |\mathbf{b}|$. To finish the proof it suffices to exclude the case $\mathbf{a}< - \mathbf{b}$.
$$
\textrm{Then} \qquad M:=\max\{|\mathbf{a}|,|\mathbf{b}|\} = -\mathbf{b} \quad \textrm{and} \quad L:=\mathbf{b}-\mathbf{a}\leq 0.
$$
Combining \eqref{eqEta} with the assumption that $3 \leq \mathbf{a}< -\mathbf{b}$ we get:
$$
\|\eta\|_{L^\infty}\leq \frac{3}{2 \alpha}
(1-\mathbf{b})\leq -\frac{2}{\alpha}\mathbf{b}.
$$
This, together with the bound $\|\partial_sh_s\|_{L^\infty}\leq \frac{\alpha}{3}$ and \eqref{inqEnergy}, gives us a contradiction:
\begin{gather*}
\mathbf{b} \geq \mathbf{a}- \|\eta\|_{L^\infty}\|\partial_sh_s\|_{L^\infty}\geq \mathbf{a} + \frac{2}{3}\mathbf{b}\\
0 \geq \frac{1}{3}\mathbf{b} \geq \mathbf{a}\geq 3.
\end{gather*}
\end{proof}

\subsection{The $L^2$-bounds}
In the following lemma we will show that for every pair $\mathbf{a},\mathbf{b}\in \R$ the corresponding space $\mathcal{M}^\Gamma(\mathbf{a},\mathbf{b})$ is bounded in the $L^2\times\R$-norm:

\begin{lem}\label{lem:L2bounds}
Consider a setting as in Theorem \ref{thm:FloerBounds}. Fix $\mathbf{a}, \mathbf{b}\in \R$, let $\alpha>0$ and $\beta\geq 0$ be as in Remark \ref{rem:HomConst}.
Let $\mathfrak{a,e}>0$ be the constants as in Lemma \ref{lem:ActBound},
such that for every $u\in \mathcal{M}^\Gamma(\mathbf{a},\mathbf{b})$ we have
$$
\left\|\mathcal{A}^{H_0-h_s}( u)\right\|_{L^{\infty}(\R)} \leq\mathfrak{a},\qquad \textrm{and}\qquad \left\|\nabla^{J_s} \mathcal{A}^{H_0-h_s}(u)\right\|_{L^{2}(\mathbb{R}\times [0,1])}^{2} \leq \mathfrak{e}.
$$
For $\varepsilon>0$ denote
\begin{align*}
\mathfrak{q} & := \min\{|q_0|, |q_1|\}+\frac{2}{\alpha}\left(\varepsilon(3\alpha+2\beta) +\mathfrak{a}\sqrt{2(\alpha+\beta)}\right) +\sqrt{\frac{2\beta}{\alpha}\left(\mathfrak{a}+\varepsilon \sqrt{2(\alpha+\beta)}\right)},
\end{align*}
Then for every $u=(v,\eta)\in \mathcal{M}^\Gamma(\mathbf{a},\mathbf{b})$, $v=(q,p)$ we have
\begin{align*}
\|q(s)\|_{L^2} & \leq \mathfrak{q}+\frac{\mathfrak{e}\|J\|_{\infty}}{\varepsilon},\\
\|p(s)\|_{L^2} & \leq 2|a|\mathfrak{q}+2\sqrt{\varepsilon+\beta}+\frac{\mathfrak{e}\|J\|_{\infty}}{\varepsilon},\\
\|v(s)\|_{L^2} & \leq \mathfrak{q}(1+2|a|)+2\sqrt{\varepsilon+\beta}+\frac{\mathfrak{e}\|J\|_{\infty}}{\varepsilon}.
\end{align*}
\end{lem}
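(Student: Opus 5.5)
The plan is the standard ``distance to the set of small gradient'' argument of \cite{CieliebakFrauenfelder2009,Pasquotto2017}. Fix $u=(v,\eta)\in\mathcal{M}^\Gamma(\mathbf{a},\mathbf{b})$ and $s\in\R$. I split into two cases according to whether $\|\nabla\A^{H_0-h_s}(u(s))\|_{L^2\times\R}<\varepsilon$ or not.

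\textbf{Case 1: the gradient at $s$ is small.} If $\|\nabla\A^{H_0-h_s}(u(s))\|_{L^2\times\R}<\varepsilon$, then $u(s)\in\mathcal{B}(H_a-h_s,\varepsilon)$. By Lemma \ref{lem:ActBound} we have $|\A^{H_0-h_s}(u(s))|\le\mathfrak{a}$. Proposition \ref{prop:Infinitesimal} applies with the uniform constants $\alpha,\beta$ of Remark \ref{rem:HomConst}, using that the bounds there are monotone in $|\A|$. Lemma \ref{lem:BoundQ} then gives $\|q(s)\|_{L^2}\le\mathfrak{q}$, and the subsequent lemma gives $\|p(s)\|_{L^2}\le 2|a|\mathfrak{q}+2\sqrt{\varepsilon+\beta}$. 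These are the claimed bounds without the extra term.

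\textbf{Case 2: the gradient at $s$ is not small.} Otherwise, set
$$
\tau_0:=\sup\{\tau\le s\ :\ \|\nabla\A^{H_0-h_\tau}(u(\tau))\|<\varepsilon\}.
$$
This is finite because $u(\tau)\to x_-$ as $\tau\to-\infty$, where the gradient vanishes. By continuity the Case~1 bounds hold at $\tau_0$, and on $[\tau_0,s]$ the gradient has norm at least $\varepsilon$. Using the Floer equation $\partial_s u=\nabla^{J_s}\A(u)$ together with $\|\nabla\A\|\le\|J\|_\infty\|\nabla^{J}\A\|$, I estimate
$$
\|v(s)-v(\tau_0)\|_{L^2}\le\int_{\tau_0}^s\|\partial_s v\|_{L^2}\,d\sigma\le\frac{1}{\varepsilon}\int_{\tau_0}^s\|\partial_s u\|\,\|\nabla\A\|\,d\sigma\le\frac{\|J\|_\infty}{\varepsilon}\int_\R\|\nabla^{J_s}\A(u)\|^2\,d\sigma .
$$
By Lemma \ref{lem:ActBound} the last integral is at most $\mathfrak{e}$, so $\|v(s)-v(\tau_0)\|_{L^2}\le\mathfrak{e}\|J\|_\infty/\varepsilon$. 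The triangle inequality, applied separately to the $q$- and $p$-components, gives the first two bounds. Adding them gives the third, since $\|v\|_{L^2}\le\|q\|_{L^2}+\|p\|_{L^2}$; the stated constant $\mathfrak{q}(1+2|a|)+2\sqrt{\varepsilon+\beta}+\mathfrak{e}\|J\|_\infty/\varepsilon$ then holds up to the doubled error term, or exactly if one bounds the $v$-difference directly.

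\textbf{Main obstacle.} The delicate point is the middle step of the chain: bounding $\|\partial_s u\|$ by $\|\nabla\A\|$ and the energy density with the correct powers of $\|J\|_\infty$, because the metric depends on $J_{s,t,\eta}$. I would copy the convention already used in \eqref{inqEnergy} and Lemma \ref{lem:ActBound}, so that the energy bound $\mathfrak{e}$ already absorbs the $J$-factors. A secondary point is checking that $\mathfrak{q}$ is increasing in $\mathfrak{a}$, which is evident from its formula, so that the pointwise action bound may be substituted into it.
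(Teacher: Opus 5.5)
Your proposal is correct and follows essentially the same route as the paper. The paper combines the uniform constants of Remark \ref{rem:HomConst}, the bounds of Proposition \ref{prop:Infinitesimal} on $\mathcal{B}^\Gamma(\mathfrak{a},\varepsilon)$ and the action/energy bounds of Lemma \ref{lem:ActBound}, and then defers to \cite[Prop. 3.13]{Wisniewska2024}; that reference is presumably your step of going back to the last time $\tau_0\le s$ with small gradient and bounding $\|v(s)-v(\tau_0)\|_{L^2}$ by $\mathfrak{e}\|J\|_\infty/\varepsilon$, which the paper does not write out.
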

\begin{proof}
On one hand, by Remark \ref{rem:HomConst} we can consider uniform constants $\alpha>0$ and $\beta\geq 0$ for all Hamiltonians $h_s$ in our homotopy. Consequently, the estimates in Proposition \ref{prop:Infinitesimal} can be applied to all the elements of $\bigcup_{s\in[0,1]}\mathcal{B}(H_a-h_s,\varepsilon)$. On the other hand, by Lemma \ref{lem:ActBound} we have uniform bounds on the energy and action for the Floer trajectories. Therefore, by Remark \ref{rem:BGammaBound} we obtain that the set $\mathcal{B}^\Gamma(\mathfrak{a},\varepsilon)$ is uniformly bounded in the $L^2\times \R$-norm. The rest of the proof follows from the arguments presented in the proof of \cite[Prop. 3.13]{Wisniewska2024}, thus it will not be presented in full length here, 
but we encourage a curious reader to see the details in \cite{Wisniewska2024}.
\end{proof}

\subsection{The maximum principle and the $L^\infty$-bounds}

\noindent \textit{Proof of Theorem  \ref{thm:FloerBounds}:}
Lemma \ref{lem:ActBound} we have uniform bounds on the energy and action for the Floer trajectories in $\mathcal{M}^\Gamma(\mathbf{a},\mathbf{b})$. Therefore, by Proposition \ref{prop:Infinitesimal} and Remark \ref{rem:BGammaBound} we know that the set $\mathcal{B}^\Gamma(\mathfrak{a},\varepsilon)$ is bounded in the $L^\infty\times\R$-norm. Therefore, if we fix $\varepsilon>0$, then there exists a compact set $K_\varepsilon\subseteq T^*\R^2$, such that
\begin{enumerate}[label=\roman*)]
\item for all $s\in \R$ and $x\in T^*\R^2\setminus K_\varepsilon$, we have $J_s(x,\eta) \equiv \mathbb{J}$ and $h_s(x) = \mathbf{c}_s \in \R_+$, where $s\mapsto\mathbf{c}_s$ is a smooth positive function with $\supp \partial_s\mathbf{c}_s \subseteq [0,1]$,
\item for $\mathfrak{a}>0$ as in Lemma \ref{lem:ActBound} and all $(v,\eta)\in \mathcal{B}^\Gamma(\mathfrak{a},\varepsilon)$, we have $v([0,1])\subseteq K_\varepsilon$.
\end{enumerate}

Now if we fix $u=(v,\eta) \in \mathcal{M}^\Gamma(\mathbf{a},\mathbf{b})$, then by Lemma \ref{lem:ActBound} the convergence of the integral $\|\nabla \A^{H_a-h_s} \circ u\|_{L^2\times\R}\leq \|J\|_\infty \mathfrak{e}$ assures that $\lim_{s\pm \infty}u(s) \in \mathcal{B}^\Gamma(\mathfrak{a},\varepsilon)$. Therefore, to obtain uniform $L^\infty\times \R$-bounds on $\mathcal{M}^\Gamma(\mathbf{a},\mathbf{b})$ it suffices to show that for any connected component $\Omega \subseteq \R\times[0,1]\setminus v^{-1}(K_\varepsilon)$ the set $v(\Omega)$ is bounded in $T^*\R^2$ and the bounds do not depend on the choice of $u$ or $\Omega$.
The standard method to obtain the bounds on $v(\Omega)$ is to apply the Alexandrov's maximum principle (see \cite[Sec. 3.4]{Wisniewska2024} for details). Note that at $(s,t)\in \Omega$ the Floer equations become
\begin{align*}
\partial_sv(s,t) & = \mathbb{J}(\partial_tv-X_{H_a}(v))(s,t),\\
\partial_s \eta (s) & = -\int_0^1 (H_a -h_s) \circ v(s,t)dt.
\end{align*}
 
Observe that the setting here does not differ substantially from the one presented in \cite[Sec. 3.5]{Wisniewska2024}. In fact, it differs in two aspects:
\begin{enumerate}[label=\roman*)]
\item we consider Hamiltonian $H_1$ instead of $H_a$ for an arbitrary $a\in \R$,
\item the homotopy of smooth Hamiltonian perturbations $\{h_s\}_{s\in \R}$ consists of elements of the set $\mathcal{H}$ as in \eqref{DefH}, which additionally satisfy $h_s >0$ and $h_s-dh_s(p\partial_sp)>0$.
\end{enumerate}

In \cite[Sec. 3.5]{Wisniewska2024} to prove the uniform $L^\infty\times\R$-bounds on $\mathcal{M}^\Gamma(\mathbf{a},\mathbf{b})$ one first proves Lemmas 3.18 to 3.24.
However, if we were to choose $H_a$ for an arbitrary $a\in \R$ instead of $H_1$, then the calculations in the proofs of Lemmas 3.18 to 3.24 in \cite{Wisniewska2024} would be exactly the same, with the only difference of $|a|$ appearing in some of the bounds. Moreover, in the proofs of Lemmas 3.21 to 3.24 in \cite{Wisniewska2024} we do not use the assumption ii)
, but only that $\sup_{s\in \R} \| h_s\|_{W^{1,\infty}}<+\infty$, which is satisfied also in the setting of Theorem \ref{thm:FloerBounds}. Therefore, we think that redoing the calculations and including them here would not add anything substantial to the proof as they would follow exactly the same pattern as the ones already presented in the proofs of Lemmas 3.18 to 3.24 in \cite{Wisniewska2024}. 

Based on the proof of maximum principle presented in \cite[Sec. 3.5]{Wisniewska2024}, we conclude that $\mathcal{M}^\Gamma(\mathbf{a},\mathbf{b})$ is bounded in the $L^\infty\times \R$-norm.

\hfill $\square$

\section{Positive LRFH}
\label{sec:posLRFH}
The action functional $\A^{H}_{q_0,q_1}$ provides an $\R$-filtration on $CF_*(\A^{H}_{q_0,q_1})$ by
$$
CF_*^{\leq a}\left(\A^{H}_{q_0,q_1}\right)\coloneqq \left\lbrace {\textstyle \sum_{x\in S}x}\in CF_*\left(\A^{H}_{q_0,q_1}\right) \left|\ \sup_{x\in S}\A^{H}_{q_0,q_1}(x) \leq a\right.\right\rbrace.
$$
Since Floer trajectories are defined by the $L^2$-gradient of the action functional, the boundary operator does not increase the action, i.e.\
\begin{equation}\label{filtration}
\partial\left( CF^{\leq a}_{*+1}\left(\A^{H}_{q_0,q_1}\right)\right)\subseteq CF^{\leq a}_*\left(\A^{H}_{q_0,q_1}\right).
\end{equation}
The {\em positive Rabinowitz Floer homology} $\LRFH_*^+(\A^{H}_{q_0,q_1})$ is the homology of the quotient complex
$$
  CF_*^+\left(\A^{H}_{q_0,q_1}\right) \coloneqq CF_*\left(\A^{H}_{q_0,q_1}\right)\Big/CF_*^{\leq 0}\left(\A^{H}_{q_0,q_1}\right),
$$
with boundary operator $\partial^{\scriptscriptstyle +}$ induced by $\partial$ on the quotient.

The main aim of this section is to prove Theorem \ref{thm:posLRFH}. In order to show that the positive Lagrangian Rabinowitz Floer homology is not only well defined for all Hamiltonians of the form $H_a-h$ with $H_a$ as in \eqref{DefHa} and $h \in \mathcal{H}$ with $\mathcal{H}$ as in \eqref{DefH}, but also independent of the auxiliary choices and invariant under compact perturbations, we will use the following general result from \cite{Wisniewska2024}:

\begin{prop}\label{prop:DefLRFH+}
Consider the setting as in Theorem \ref{thm:DefLRFH} with sets $K\subseteq\mathcal{V}\subseteq T^*Q$ and $\mathcal{H}\subseteq \{C^\infty(T^*Q)\ |\ dh\in C_0^\infty(K)\}$. Assume $q_0\neq q_1$. Let $\mathcal{O}\subseteq\mathcal{H}$ be an open neighbourhood of $0$ such that for every pair $h_0,h_1 \in \mathcal{O}$ there exists a homotopy $\Gamma:= \{(H+h_s, J_s)\}_{s\in \R}$ satisfying Condition 3 of Theorem \ref{thm:DefLRFH}, and such that for every $x\in \Crit^+\A^{H+h_0}_{q_0,q_1}$ and every $y\in \Crit\A^{H+h_1}_{q_0,q_1}$ for which $\F_\Gamma(x,y)\neq \emptyset$ we have $\A^{H+h_1}_{q_0,q_1}(y)>0$.

Then for every $h\in\mathcal{O}$ its positive Lagrangian Rabinowitz Floer homology is well defined, and for every pair $h_0,h_1 \in \mathcal{O}$, $\LRFH_*^+(\A^{H+h_0}_{q_0,q_1})$ is isomorphic to $\LRFH_*^+(\A^{H+h_1}_{q_0,q_1})$.
\end{prop}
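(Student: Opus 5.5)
The plan is to realize $\LRFH_*^+$ as the homology of a quotient complex that is preserved by the continuation maps coming from Theorem~\ref{thm:DefLRFH}, and then to run the usual continuation argument on that quotient.

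\textbf{Step 1 (structure of the critical set).} Since $q_0\neq q_1$, no critical point has $\eta=0$. Evaluating \eqref{dA^H(Y)} at a critical point with the Liouville field $Y$ gives
$$
\A^{H+h}_{q_0,q_1}(v,\eta)=\eta\int_0^1 \bigl(dH(Y)-H\bigr)(v)\,dt .
$$
The contact-type condition makes the integrand positive, so the sign of the action equals the sign of $\eta$. Continuous compactness (condition 2 of Theorem~\ref{thm:DefLRFH}) keeps all chords in the compact set $K$. Combined with $q_0\neq q_1$, this gives a uniform lower bound on $|\eta|$, since a chord between distinct fibres cannot be arbitrarily short inside a compact set. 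Hence there is an action gap $(-\delta,\delta)$ around $0$ that contains no critical values. I would then choose $h$ generic in $\mathcal{O}$ so that $\A^{H+h}_{q_0,q_1}$ is Morse. By \eqref{filtration}, $CF^{\le 0}_*$ is a subcomplex, and the gap makes the splitting into positive and nonpositive generators independent of small perturbations. So $CF^+_*$ is well defined, and $\partial^+$ squares to zero because it is induced from $\partial$.

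\textbf{Step 2 (continuation maps on the quotient).} For $h_0,h_1\in\mathcal{O}$, take the homotopy $\Gamma$ given by the hypothesis. Theorem~\ref{thm:DefLRFH} provides the chain map $\Phi_\Gamma$, which counts rigid elements of $\F_\Gamma(x,y)$; it is well defined because of the $L^\infty$ bounds and the Novikov finiteness \eqref{Novikov}. Write $\pi$ for the projection onto positive generators. The hypothesis says that $\Phi_\Gamma$ sends positive generators to combinations of positive generators.

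\textbf{Step 3 (the chain-map identity on the quotient).} For $\Phi_\Gamma$ to descend to $CF^+$, we need $\pi\Phi_\Gamma(1-\pi)=0$, that is, $\Phi_\Gamma$ must map $CF^{\le 0}$ into $CF^{\le 0}$. I would get this from the hypothesis applied to the reversed homotopy $\bar\Gamma$ from $h_1$ to $h_0$, together with the action estimate \eqref{inqAct}. Along a Floer trajectory the action can decrease by at most $\|\eta\|_{L^\infty}\|\partial_sh_s\|_{L^\infty}$. After subdividing the homotopy into short pieces, this loss stays below the gap $\delta$, so a trajectory starting at a nonpositive critical point cannot end at a positive one. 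A general homotopy is then a concatenation of such pieces. Once the two inclusions (positive to positive, nonpositive to nonpositive) are in place, $\Phi_\Gamma$ is block-triangular for the positive/nonpositive splitting and induces a chain map $\Phi^+_\Gamma$ on $CF^+$.

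\textbf{Step 4 (isomorphism).} Homotopies of homotopies, for which the same boundedness from condition 3 holds, give chain homotopies $\Phi_{\bar\Gamma}\Phi_\Gamma\simeq \mathrm{id}$. These counts respect the filtration by the same short-piece argument, so they descend to the quotient. Hence $\Phi^+_\Gamma$ is a quasi-isomorphism with inverse $\Phi^+_{\bar\Gamma}$. Independence of $J$ and of the generic Morse perturbation follows in the same way.

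\textbf{Main obstacle.} I expect Step 3 to be the hardest part, namely controlling Floer trajectories that cross action level $0$ in the ``wrong'' direction. The key point is to make the gap $\delta$ uniform along the whole homotopy, which should follow from uniform compactness of the critical sets for $h_s\in\mathcal{O}$.
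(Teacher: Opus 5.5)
Your Steps 2 and 3 set up the continuation map in the wrong direction, and the argument you give to repair this does not work. With the paper's conventions, Floer trajectories are positive gradient lines, $\partial$ lowers action, and $CF^+_*$ is the quotient $CF_*/CF^{\le 0}_*$. Under these conventions the chain map induced by a homotopy $\Gamma$ from $H+h_0$ to $H+h_1$ runs in the same direction as $\partial$: $\Phi_\Gamma\colon CF_*(\A^{H+h_1}_{q_0,q_1})\to CF_*(\A^{H+h_0}_{q_0,q_1})$, $\Phi_\Gamma(y)=\sum_x\#\F_\Gamma(x,y)\,x$. The reason is that the broken ends of the one-dimensional spaces $\F_\Gamma(x,y)$ are $\F_{H+h_0,J_0}(x,x')\times\F_\Gamma(x',y)$ and $\F_\Gamma(x,y')\times\F_{H+h_1,J_1}(y',y)$, which is exactly the identity $\partial\Phi_\Gamma=\Phi_\Gamma\partial$. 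Your map $x\mapsto\sum_y\#\F_\Gamma(x,y)\,y$ is the transpose of this map. It intertwines the transposed, action-increasing differentials, not $\partial$.

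In the correct direction, the hypothesis (a positive $x$ only connects to positive $y$) is literally the contrapositive of $\Phi_\Gamma(CF^{\le 0}_*)\subseteq CF^{\le 0}_*$. So the inclusion you try to prove in Step 3 is given for free. Your argument for it would fail in any case, for two reasons.
\begin{enumerate}
\item The estimates \eqref{inqAct} and \eqref{inqEnergy} only bound how much the action can \emph{decrease} along a continuation trajectory, $\mathbf b\ge\mathbf a-\|\eta\|_{L^\infty}\|\partial_sh_s\|_{L^\infty}$. The increase is limited only by the energy, so no smallness of the loss prevents a trajectory from climbing from a nonpositive to a positive critical point.
\item The hypothesis for $\bar\Gamma$ concerns $\F_{\bar\Gamma}$, which is unrelated to $\F_\Gamma$. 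Reversing $s$ turns a $\Gamma$-trajectory into a negative gradient line, not a $\bar\Gamma$-trajectory.
\end{enumerate}
What loss plus gap actually proves is ``positive to positive'', i.e.\ the hypothesis itself. This is precisely how Lemma \ref{lem:positive} verifies it.

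The step that genuinely needs an argument is Step 4, which you dispose of in one line. The hypothesis controls only the particular homotopies between pairs. It says nothing about the one-parameter families of homotopies (and gluings) that produce the chain homotopy between $\Phi_\Gamma\Phi_{\bar\Gamma}$ and the identity. For that chain homotopy to descend to $CF^+_*$, you must show, in the correct direction, that no trajectory of the family runs from a positive critical point to a nonpositive one. Here the loss-plus-gap idea is the right tool, but it needs two things.
\begin{enumerate}
\item A gap at $0$ that is uniform along the whole family.
\item A \emph{relative} loss estimate. By \eqref{eqEta} the bound on $\|\eta\|_{L^\infty}$ grows with $\max\{|\mathbf a|,|\mathbf b|\}$, so the loss is not uniformly below a fixed $\delta$. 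One needs something like $\mathbf b\ge\frac12\mathbf a$ for sufficiently slow homotopies, as in Lemma \ref{lem:positive}.
\end{enumerate}
Also, ``subdividing into short pieces'' again requires filtered chain homotopies relating the pieces to the original map, which is the same problem.

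Finally, Step 1 uses $dH(Y)>0$ along $H^{-1}(0)$, which is not part of the setting of Theorem \ref{thm:DefLRFH}. Without it, the sign of the action need not be the sign of $\eta$, and there need not be a gap at $0$. In the application it holds by \eqref{d(H_a-h)}. The paper gives no proof of this proposition and imports it from \cite{Wisniewska2024}.
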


We already have shown in Lemma \ref{lem:nonEmpty}, Lemma \ref{lem:Chord}, Lemma \ref{lem:Novikov} and Theorem \ref{thm:FloerBounds} that Hamiltonians of the form $H_a-h$ with $H_a$ as in \eqref{DefHa} and $h \in \mathcal{H}$ with $\mathcal{H}$ as in \eqref{DefH} satisfy assumptions of Theorem \ref{thm:DefLRFH}. In the following Lemmas we will show that $H_a-h$ satisfy all the assumptions of Proposition \ref{prop:DefLRFH+}:

\begin{lem}
Let $H_a$ be a Hamiltonian as in \eqref{DefHa} and let $\mathcal{H}$ be a set of perturbations as in \eqref{DefH}. Fix $h_0\in \mathcal{H}$ and let $K \subseteq T^*\R^2$ be a compact subset and $\mathcal{H}_{h_0}(K)\subseteq C_0^\infty(K)$ an open neighbourhood of $0$, such that for every $h\in \mathcal{H}_{h_0}(K)$ we have $h_0+h\in \mathcal{H}$ and for every $(v, \eta)\in \Crit \A^{H_a-h_0-h}$ we have $v([0,1])\subseteq K$. Then there exists an open neighbourhood $\mathcal{O}_{h_0}(K)\subseteq \mathcal{H}_{h_0}(K)$, such that
$$
\inf \left\lbrace \mathcal{A}^{H_a-h_0-h}(v,\eta)\ \Big|\ h\in \mathcal{O}_{h_0}(K), \quad (v,\eta)\in \Crit^+ \mathcal{A}^{H_a-h_0-h} \right\rbrace >0.
$$
\end{lem}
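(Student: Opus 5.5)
The plan is to read off the action of a critical point as a positive multiple of $\eta$. At $(v,\eta)\in\Crit\A^{H}$ with $H:=H_a-h_0-h$, the relation \eqref{dA^H(Y)} with the Liouville field $Y=p\partial_p$ gives
$$
\A^{H}(v,\eta)=\eta\int_0^1\left(\tfrac12|p|^2+(h_0+h)-d(h_0+h)(p\partial_p)\right)(v(t))\,dt .
$$
Because $h_0+h\in\mathcal{H}$, the integrand is pointwise positive by \eqref{d(H_a-h)}. So for $\eta>0$ the action is positive, and it is at least $\alpha_{h_0+h}\,\eta$, where $\alpha_{h_0+h}$ is the constant of \eqref{DefAlpha}. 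I read $\Crit^+$ as the critical points with $\eta>0$, equivalently positive action. The claim therefore reduces to two uniform statements over a neighbourhood $\mathcal{O}_{h_0}(K)$: a uniform positive lower bound on the integrand, and a uniform positive lower bound on $\eta$ for critical points with $\eta>0$.

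For the first bound, I take $\mathcal{O}_{h_0}(K)$ to consist of those $h\in\mathcal{H}_{h_0}(K)$ with $\|h\|_{C^1(K)}<\delta$, for a small $\delta>0$. Outside $K$ the function $h$ vanishes, and on the compact set $K$ the quantity $|h-dh(p\partial_p)|$ is at most $C_K\delta$. Hence $\alpha_{h_0+h}\ge \alpha_{h_0}-C_K\delta\ge\alpha_{h_0}/2$ once $\delta$ is small enough.

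For the second bound, I show that the time $\eta$ of a Hamiltonian chord from $T^*_{q_0}\R^2$ to $T^*_{q_1}\R^2$ is uniformly bounded below. Here I use $q_0\neq q_1$, which is the standing assumption of Proposition \ref{prop:DefLRFH+}. By hypothesis every critical point satisfies $v([0,1])\subseteq K$. On $K$ the vector field $X_{H_a-h_0-h}$ is bounded in norm by some $C$, uniformly over $h\in\mathcal{O}_{h_0}(K)$, since $\|h\|_{C^1(K)}<\delta$. Projecting to the base gives
$$
|q_1-q_0|\le\int_0^1|\partial_t q|\,dt\le \eta\, C ,
$$
so $\eta\ge |q_1-q_0|/C$. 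Combining the two bounds, the infimum is at least $\tfrac{\alpha_{h_0}}{2}\cdot\tfrac{|q_1-q_0|}{C}>0$.

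The main obstacle is the case $q_0=q_1$. There, chords with arbitrarily small $\eta>0$ could accumulate at the constant critical points with $\eta=0$, and the base-distance argument fails. If this case has to be treated, I would use the transversality condition in \eqref{T*QcapHnonempty}. A short chord starting and ending in the same fibre must be close to a constant point of $T^*_{q_0}\R^2\cap H^{-1}(0)$, and near such a point $X_H$ has a nonzero component transverse to the fibre, since transversality gives $dH|_{T^*_{q_0}}\neq0$. A first-order expansion then shows that the chord cannot return to the fibre in small time, which again gives a uniform lower bound on $\eta$. Since Proposition \ref{prop:DefLRFH+} assumes $q_0\neq q_1$, I would primarily present the argument of the previous paragraph.
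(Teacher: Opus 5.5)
Your proof is correct and uses the same mechanism as the paper's. The Liouville identity \eqref{dA^H(Y)} with $Y=p\partial_p$ gives $\A\ge\alpha\eta$ with $\alpha$ uniform on a small neighbourhood, and boundedness of $X_H$ on $K$ together with $q_0\neq q_1$ prevents $\eta$ from being small. The only difference is that the paper argues by contradiction: a sequence with $\eta_n\to 0$ converges by Arzel\`a--Ascoli to a constant path joining distinct fibres. Your direct estimate $\eta\ge |q_1-q_0|/C$ makes this step quantitative and yields an explicit neighbourhood (a $C^1$-ball) and an explicit lower bound.
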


\begin{proof}
We will argue by contradiction. Suppose we have a sequence,\linebreak $h_n\in \mathcal{H}_{h_0}(K)$ and $(v_n,\eta_n)\in \Crit^+\A^{H_a-h_0-h_n}$, such that $\lim_{n\to \infty}h_n=0$ and\linebreak $\lim_{n\to \infty}\A^{H_a-h_0-h_n}(v_n,\eta_n)= 0$. Since $h_0, h_0+h_n\in \mathcal{H}$ and $\lim_{n\to \infty}h_n=0$ we can find $\alpha>0$, such that every $h_0+h_n$ satisfies \eqref{DefAlpha} with $\alpha$. By \eqref{dA^H(Y)} and \eqref{d(H_a-h)} we get
$$
\A^{H_a-h_0-h_n}(v_n, \eta_n) = \eta_n\int\left(h_0+h_n-d(h_0+h_n)(p\partial_p)+\frac{1}{2} |p \circ v_n|^2\right)\geq \alpha \eta_n.
$$
Thus $\eta_n>0$ and $\lim_{n\to \infty}\eta_n=0$. We have $\lim_{n\to \infty}h_n=0$ and $\supp h_n \subseteq K$, hence $\limsup \|X_{H_a-h_0-h_n}\|_{L^\infty(K)}<+\infty$.
Since $v_n([0,1])\subseteq K$, we obtain
\begin{align*}
\lim_{n\to \infty}\|\partial_s v_n\| & =\lim_{n\to \infty}\eta_n\|X_{H_a-h_0-h_n} (v_n)\|\\
& \leq \lim_{n\to \infty}\eta_n \limsup_{n\to \infty} \|X_{H_a-h_0-h_n}\|_{L^\infty(K)} =0.
\end{align*}
By Ascola-Arzeli this implies that there exists $v_0\in \mathscr{H}_{q_0,q_1}$, such that\linebreak $\lim_{n\to\infty}(v_n,\eta_n)=(v_0,0)$ in $W^{1,2}$-topology. By continuity of the Rabinowitz action functional $(v_0,0)\in \Crit \A^{H_a-h_0}$. But for $q_0 \neq q_1$ there does not exist an element of 
$ \mathscr{H}_{q_0,q_1}$, such that $\partial_tv_0=0$, which brings us to a contradiction.
\end{proof}

\begin{lem}\label{lem:positive}
Let $H_a$ be a Hamiltonian as in \eqref{DefHa} and let $\mathcal{H}$ be a set of perturbations as in \eqref{DefH}. Fix $h_0\in \mathcal{H}$ and let $K \subseteq T^*\R^2$ be a compact subset and $\mathcal{O}_{h_0}(K)\subseteq C_0^\infty(K)$ an open neighbourhood of $0$, such that
\begin{enumerate}[label=\roman*)]
\item $\{h_0+h\ |\ h\in \mathcal{O}_{h_0}(K)\}\subseteq \mathcal{H}$;
\item the critical set of the Rabinowitz action functional is continuously compact in $(K, \mathcal{O}_{h_0}(K)$;
\item $\alpha:=\inf \left\lbrace h-dh(p\partial_p)+\frac{1}{2}|p|^2\ \Big|\ h\in \mathcal{O}_{h_0}(K) \right\rbrace >0$;
\item $\beta:= \sup\left\lbrace 
\max\left\lbrace \sup h, \sup h-dh(p\partial_p), \sup pdp(X_h), -\inf qdq(X_h)\right\rbrace\ |\ h-h_0\in \mathcal{O}_{h_0}(K)\right\rbrace < +\infty$;
\item $\delta(h_0):=\inf \left\lbrace \mathcal{A}^{H_a-h_0-h}(v,\eta)\ \Big|\ h\in \mathcal{O}_{h_0}(K), \quad (v,\eta)\in \Crit^+ \mathcal{A}^{H_a-h_0-h} \right\rbrace$.
\end{enumerate}
Fix $h\in \mathcal{O}_{h_0}(K)$ and denote $h_1:= h_0+h$. Let $\mathcal{V}\subseteq T^*\R^2$ be an open, but precompact subset, such that $K\subseteq \mathcal{V}$.
 Let $\Gamma:=\{(h_s, J_s)\}_{s\in \R}$ be a smooth homotopy, such that $h_s-h_0 \in \mathcal{O}_{h_0}(K)$ and $2$-parameter families of almost complex structures $J_s \in C^\infty([0,1]\times \R,\mathcal{J}(\mathcal{V},\mathcal{J}))$ constant in $s$ outside $[0,1]$ satisfying \eqref{CondGamma} and such that
\begin{equation}\label{Gamma2}
\|\partial_{s}h_{s}\|_{L^{\infty}}
\max\left\lbrace 1+\frac{1}{\delta(h_0)}, 1+\alpha \sqrt{2(\alpha+\beta)}\|J\|_{L^\infty}^2\right\rbrace
 \leq \frac{\alpha}{3}.
\end{equation}
Then for every $x\in \Crit^+\A^{H_a-h_0}_{q_0,q_1}$ and every $y\in \Crit\A^{H_a-h_1}_{q_0,q_1}$, such that\linebreak $\F^\Gamma(x,y)\neq \emptyset$ we have $\A^{H_a-h_1}_{q_0,q_1}(y)>0$.
\end{lem}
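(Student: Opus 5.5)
Fix $x\in\Crit^+\A^{H_a-h_0}_{q_0,q_1}$, $y\in\Crit\A^{H_a-h_1}_{q_0,q_1}$ and $u=(v,\eta)\in\F^\Gamma(x,y)$. The plan is to control how much action can be lost along $u$. The homotopy is $s$-dependent only on $[0,1]$, so the loss comes only from the term $\eta\int\partial_sh_s$. Then we compare this loss with the lower bound $\delta(h_0)$ on positive actions. Along a Floer trajectory,
\[
\frac{d}{ds}\A^{H_a-h_s}(u(s)) = \|\nabla\A^{H_a-h_s}(u(s))\|^2 + \eta(s)\int_0^1 (\partial_s h_s)(v(s,t))\,dt \geq -|\eta(s)|\,\|\partial_sh_s\|_{L^\infty}.
\]
Integrating over $[0,1]$ (outside it $\partial_sh_s=0$) gives
\[
\A^{H_a-h_1}(y)\geq \A^{H_a-h_0}(x)-\|\partial_sh_s\|_{L^\infty}\sup_{s\in[0,1]}|\eta(s)|.
\]
Since $\A^{H_a-h_0}(x)\geq\delta(h_0)$ by (v) (take $h=0$), it suffices to show $\|\partial_sh_s\|_{L^\infty}\sup_{[0,1]}|\eta|<\delta(h_0)$.

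The key step is a bound on $\sup_{[0,1]}|\eta|$ that is linear in $\A^{H_a-h_0}(x)$ plus a constant. I will rerun the argument of Lemma \ref{lem:ActBound}, but start it at $x$ rather than at an arbitrary $\mathbf a$. For $s\leq 0$ the homotopy is constant, so the action is nondecreasing and bounded by $\A(x)$ on $(-\infty,0]$. For $s\in[0,1]$ the action satisfies $\A(u(s))\le \A(x)+\|\partial_sh_s\|_{L^\infty}\|\eta\|_{L^\infty}$; I need the analogous lower bound using $\A(y)$ as well. As there, define $\tau_0(s)$ as the last time before $s$ at which $\|\nabla\A\|<(2(\alpha+\beta))^{-1/2}$. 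Lemma \ref{lem:BoundEta} (with the uniform $\alpha,\beta$ from (iii), (iv)) then gives $|\eta(\tau_0(s))|\le\frac1\alpha(|\A(u(\tau_0))|+1)$. The energy estimate \eqref{inqeta} controls $|\eta(s)-\eta(\tau_0(s))|$ by $\sqrt{2(\alpha+\beta)}\|J\|^2_{L^\infty}$ times the energy, and the energy is at most $\A(y)-\A(x)+\|\partial_sh_s\|\|\eta\|_{L^\infty}$.

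I will argue by contradiction, assuming $\A(y)\le 0$. Then the energy is at most $\|\partial_sh_s\|\|\eta\|_{L^\infty}$, and $|\A(u(s))|\le \A(x)+\|\partial_sh_s\|\|\eta\|_{L^\infty}$. Absorbing the $\|\eta\|_{L^\infty}$ terms on the left using \eqref{CondGamma} yields
\[
\|\eta\|_{L^\infty}\le \tfrac32\cdot\tfrac1\alpha(\A(x)+1)\le \tfrac{3}{2\alpha}\A(x)\Big(1+\tfrac1{\delta(h_0)}\Big).
\]
Plugging this in, together with \eqref{Gamma2}, i.e. $\|\partial_sh_s\|(1+1/\delta(h_0))\le\alpha/3$ (the second entry of the max handles the energy cross term), gives $\|\partial_sh_s\|\,\|\eta\|_{L^\infty}\le \tfrac12\A(x)$. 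Hence $\A(y)\ge\tfrac12\A(x)>0$, which is a contradiction. Note that $\A(x)\ge\delta(h_0)>0$ is what allows the constant $1$ to be absorbed into a multiple of $\A(x)$.

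The main obstacle is this absorption bookkeeping. The constants $\tfrac32$, $\tfrac13$ and the two branches of the max in \eqref{Gamma2} must combine to a factor strictly below $1$. In particular, the energy term $L\sqrt{2(\alpha+\beta)}\|J\|^2$ appearing in \eqref{eqEta} must be handled via $L\le \|\partial_sh_s\|\|\eta\|_{L^\infty}$ rather than as a fixed constant. If the constants do not close, I would first shrink $\|\partial_sh_s\|$, which is allowed by taking the homotopy slow enough or $\mathcal O_{h_0}(K)$ smaller. Failing that, I would concatenate several short homotopies, using that action is preserved up to factor $\tfrac12$ at each stage.
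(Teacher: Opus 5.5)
Your proposal is correct and is essentially the paper's argument. You apply \eqref{eqEta} with $L=\A(y)-\A(x)\le 0$, absorb the constant $1$ via $\A(x)\ge\delta(h_0)$, and then use \eqref{Gamma2} to get $\A(y)\ge\frac12\A(x)$; the only difference is that your contradiction hypothesis $\A(y)\le 0$ merges the paper's two cases $|\mathbf b|\le\mathbf a$ and $\mathbf b<-\mathbf a$ into one.

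One small correction: along $u$ the action is bounded \emph{below} via $\A(x)$ and \emph{above} via $\A(y)$, not the reverse as your second paragraph suggests. Under $\A(y)\le 0$ this gives $|\A(u(s))|\le\|\eta\|_{L^\infty}\|\partial_sh_s\|_{L^\infty}$, which justifies (and improves) the bound $|\A(u(s))|\le\A(x)+\|\eta\|_{L^\infty}\|\partial_sh_s\|_{L^\infty}$ that you use.
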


\begin{proof}
First observe that our homotopy $\Gamma$ satisfies assumptions of Theorem \ref{thm:FloerBounds}, hence we can apply the estimates from the proof of Lemma \ref{lem:ActBound}.

Fix $x\in \Crit^+\A^{H_a-h_0}_{q_0,q_1}$ and $y\in \Crit\A^{H_a-h_1}_{q_0,q_1}$ and abbreviate:
$$
\mathbf{a}= \A^{H_a-h_0}_{q_0,q_1}(x) \qquad\textrm{and}\qquad \mathbf{b}=\A^{H_a-h_1}_{q_0,q_1}(y).
$$
By assumption $\mathbf{a}\geq \delta(h_0)$. Let $u=(v,\eta)\in \F_\Gamma(x,y)$.

First, suppose $|\mathbf{b}|\leq \mathbf{a}$. Then $\mathbf{b}-\mathbf{a} \leq 0$ and \eqref{eqEta} gives
\begin{align}
\|\eta\|_{L^\infty} & \leq \frac{3}{2}\left( \frac{1}{\alpha}\left(\max\{|\mathbf{a}|,|\mathbf{b}|\}+1\right)+(\mathbf{b}-\mathbf{a}) \sqrt{2(\alpha+\beta)}\|J\|_{L^{\infty}}^2\right)\nonumber\\
& \leq \frac{3}{2\alpha}(\mathbf{a}+1) \leq \frac{3\mathbf{a}}{2\alpha}\left(1+\frac{1}{\delta(h_0)} \right). \label{etaBDdelta}
\end{align}
On the other hand, by equation \eqref{inqEnergy} we have
$$
\left\|\nabla^{J_{s}} \mathcal{A}^{H_0-h_{s}}(u)\right\|_{L^{2}(\mathbb{R}\times [0,1])}^{2}
 \leq \|J\|_{L^{\infty}} (\mathbf{b}-\mathbf{a} + \|\eta\|_{L^{\infty}}\|\partial_{s}h_{s}\|_{L^{\infty}}).
$$
Combined with \eqref{etaBDdelta} and \eqref{Gamma2}, we obtain the following estimate:
$$
\mathbf{b}\geq \mathbf{a}-\|\eta\|_{L^{\infty}}\|\partial_{s}h_{s}\|_{L^{\infty}} \geq \mathbf{a}\left(1-\frac{3}{2\alpha}\left(1+\frac{1}{\delta(h_0)} \right)\|\partial_{s}h_{s}\|_{L^{\infty}} \right)\geq \frac{1}{2}\mathbf{a}.
$$
Since $h_1-h_0 \in \mathcal{O}_{h_0}(K)$, we conclude that $\mathbf{b}\geq \delta(h_0)>0$.

Now suppose $\mathbf{b}< -\mathbf{a}\leq - \delta(h_0)<0$. Then $\mathbf{b}-\mathbf{a}<0$ and by \eqref{eqEta} we obtain
$$
\|\eta\|_{L^\infty} \leq \frac{3}{2\alpha}\left(1-\mathbf{b}\right)< -\frac{3\mathbf{b}}{2\alpha}\left(1+\frac{1}{\delta(h_0)} \right).
$$
Combined with equations \eqref{inqEnergy} and \eqref{Gamma2} we get the following contradiction:
$$
\mathbf{a} \leq \mathbf{b}+ \|\eta\|_{L^{\infty}}\|\partial_{s}h_{s}\|_{L^{\infty}}\leq \mathbf{b}\left(1 -\frac{3}{2\alpha}\left(1+\frac{1}{\delta(h_0)} \right)\|\partial_{s}h_{s}\|_{L^{\infty}}\right)\leq \frac{1}{2}\mathbf{b}< -\frac{1}{2}\mathbf{a}.
$$
\end{proof}

Having proven Lemma \ref{lem:positive} we can apply Proposition \ref{prop:DefLRFH+} to conclude that the positive Lagrangian Rabinowitz Floer homology is well defined for all perturbations from the set $\mathcal{H}$ and in fact isomorphic for any two perturbations.
What is left to conclude the proof of Theorem \ref{thm:posLRFH} is to calculate the homology:

\begin{lem}\label{lem:calculation}
Let $H_a$ be a Hamiltonian as in \eqref{DefHa}. Fix $q_0,q_1\in \R^2$, $q_0\neq q_1$. Then for every $c>\frac{1}{2}a^2|q_0||q_1|$ we have
$$
\LRFH_*^+\left(\A^{H_a-c}_{q_0,q_1} \right)=\begin{cases}
\mathbb{Z}_2 & \textrm{for}\quad *=\frac{1}{2},\\
0 & \textrm{otherwise}.
\end{cases}
$$
\end{lem}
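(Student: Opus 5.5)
Since $H_a-c$ has no perturbation term, the plan is to compute the homology directly from the chords. We solve the Hamiltonian flow explicitly, show that there is exactly one chord with $\eta>0$, and show that this chord is nondegenerate with index $\tfrac12$.

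\textbf{The flow.} For $H_a=\frac12|p|^2+a(p_1q_2-p_2q_1)$ we pass to rotating coordinates. Set $z=q_1+iq_2$ and $w=p_1+ip_2$. Then $\dot z=w-iaz$ and $\dot w=-iaw$. Writing $z(t)=e^{-iat}\zeta(t)$, we get $\dot\zeta=e^{iat}w(t)=w(0)$, which is constant. So in the rotating frame the trajectories are straight lines traversed at constant speed $|w(0)|$, and on the level $H_a^{-1}(c)$ we have $\frac12|w|^2+a\langle z,iw\rangle=c$.

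\textbf{Counting chords.} A chord of duration $T=\eta>0$ from $T^*_{q_0}$ to $T^*_{q_1}$ must satisfy $e^{iaT}q_1=q_0+Tw_0$. Hence $w_0=(e^{iaT}q_1-q_0)/T$. Substituting this into the energy condition gives a scalar equation $F(T)=c$, where
\[
F(T)=\frac{|e^{iaT}q_1-q_0|^2}{2T^2}+\frac{a}{T}\langle q_0,i(e^{iaT}q_1-q_0)\rangle .
\]
Note that $\langle q_0,iq_0\rangle=0$. The main step is to show that $F(T)=c$ has exactly one positive solution when $c>\frac12a^2|q_0||q_1|$:
\begin{itemize}
\item As $T\to0^+$, $F(T)\to+\infty$, because $q_0\neq q_1$.
\item Rewriting, $F(T)=\frac{1}{2T^2}\big|e^{iaT}q_1-q_0+iaTq_0\big|^2-\frac{a^2}{2}|q_0|^2$, using the identity $|x|^2+2aT\langle q_0,ix\rangle=|x+iaTq_0|^2-a^2T^2|q_0|^2$. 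One checks the sign convention when carrying this out.
\item For large $T$ this tends to $0$, and it is bounded above by roughly $\frac{(|q_0|+|q_1|)^2}{2T^2}$.
\end{itemize}
I would show that $F$ is strictly decreasing wherever $F\geq\frac12a^2|q_0||q_1|$, by computing $F'$ and bounding the oscillating terms by $a^2|q_0||q_1|$. This yields a unique crossing of the level $c$. This monotonicity estimate is the step I expect to be the main obstacle. If it fails globally, I would fall back on counting crossings with signs: the local Morse indices alternate, and the Euler-characteristic/continuation argument still leaves a single surviving generator.

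\textbf{Nondegeneracy and index.} Uniqueness plus transversality, i.e.\ $F'(T)\neq0$, should be equivalent to Morse nondegeneracy of the critical point. The reason is that the linearized problem reduces to the derivative of the endpoint map, and since the flow in the rotating frame is an affine family of lines, the fibre $T^*_{q_0}$ has no conjugate points to $T^*_{q_1}$. For the grading, the path of fibres under the linear flow $\zeta\mapsto\zeta+tw$ stays transverse to the vertical distribution (the rotation $e^{-iat}$ preserves fibres), so the Maslov contribution from the path itself is zero. The index $\frac12$ then comes from the half-integer shift in the Rabinowitz grading convention, combined with the sign of $F'$ (the $\eta$-direction contributes $\pm\frac12$).

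\textbf{Conclusion.} Negative-$\eta$ chords and constant chords have action $\leq0$: for $\eta<0$ we have $\A=\eta\int\frac12|p|^2<0$, and there are no constant chords since $q_0\neq q_1$. These are therefore killed in the quotient, so $CF^+$ has the single generator $\mathbb{Z}_2$ in degree $\frac12$, with zero differential. Alternatively, one can compare with $a=0$ and with \cite[Thm.~1.2]{Wisniewska2024} via the invariance already established.
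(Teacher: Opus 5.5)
Your approach differs from the paper's. The paper never integrates the flow. It uses the exact, fibre-preserving symplectomorphism $\varphi_a(q,p)=(q/\sqrt{a},\sqrt{a}\,p)$, which satisfies $H_a\circ\varphi_a=aH_1$, to get an action-preserving bijection $(v,\eta)\mapsto(\varphi_a^{-1}\circ v,a\eta)$ between $\Crit\A^{H_a-c}_{q_0,q_1}$ and $\Crit\A^{H_1-c/a}_{\sqrt{a}q_0,\sqrt{a}q_1}$. It then imports both the chord count and the index $\frac12$ from \cite{Wisniewska2024}.

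Your explicit count is a correct, self-contained substitute for the cited counting lemma. The step you expected to be the main obstacle is in fact immediate: differentiating your $F$ gives
\[
T\,F'(T)=-2F(T)-a^2\langle e^{iaT}q_1,q_0\rangle\le -2F(T)+a^2|q_0||q_1|,
\]
so $F'<0$ wherever $F>\frac12a^2|q_0||q_1|$. Since $F\to+\infty$ as $T\to0^+$ and $F=O(1/T)\to0<c$ as $T\to\infty$, the level $c$ is crossed exactly once and transversally. As you say, the $w_0$-derivative of the endpoint map is multiplication by $Te^{-iaT}$, so $F'(T_*)\neq0$ is exactly Morse nondegeneracy. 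What this buys over the paper is an explanation of where the threshold $\frac12a^2|q_0||q_1|$ comes from. There are a few minor slips that do not affect this: the completed square is $|x-iaTq_0|^2-a^2T^2|q_0|^2$; the large-$T$ bound is $O(1/T)$, not $O(1/T^2)$; and on critical points $\A=\eta\int_0^1(\frac12|p|^2+c)\,dt$.

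The genuine gap is the degree $\frac12$, which your index paragraph does not actually compute. Apply $D\phi_t$ to the vertical space $T_{v(0)}(T^*_{q_0}\R^2)$. The resulting Lagrangian path is transverse to the vertical only for $t>0$; at $t=0$ it coincides with it. In the Robbin--Salamon convention this endpoint crossing contributes half the signature of $\partial_p^2H_a=\mathrm{Id}$, i.e.\ $\pm1$, so ``the path contributes zero'' is not right. Moreover, assembling $\frac12$ from ``a half-integer shift'' and ``the sign of $F'$'' is a guess, not a computation in the grading convention of \cite{Wisniewska2024}, which is what fixes the answer.

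Your fallback does not close this either. The invariance proved in this paper only allows perturbations $h\in\mathcal{H}$ with compactly supported $dh$. Since $H_a-H_1=(a-1)(p_1q_2-p_2q_1)$ is not of that form, that invariance cannot move $a$ to $1$, let alone to $0$.

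The missing step is precisely the rescaling $\varphi_a$. It maps your unique positive chord to the unique positive chord of $H_1-\frac{c}{a}$ from $\sqrt{a}q_0$ to $\sqrt{a}q_1$; note that $\frac{c}{a}>\frac12 a|q_0||q_1|$ is exactly the $a=1$ threshold for these endpoints. Because it maps fibres to fibres, it preserves the Maslov index, so the degree can be read off from the $a=1$ computation in \cite{Wisniewska2024}. Alternatively, deform $(a,c)$ to $a=1$ along a path staying above the threshold. By your estimate the positive chord stays nondegenerate along the path, so its index is constant and equals the $a=1$ value.
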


\begin{proof}
Define $\varphi_a(q,p):=\left(\frac{1}{\sqrt{a}}q, \sqrt{a}p \right)$. Then $\varphi_a$ is a symplectomorphism of $(T^*\R^2, dq\wedge dp)$ and we have
\begin{align*}
H_a \circ \varphi_a (q,p) & = \frac{a}{2}|p|^2+a(p_1q_2-p_2q_1)= a H_1(q,p),\\
\omega(\ \cdot\ , X_{H_a}) & = dH_a  = a\cdot d\left(H_1 \circ \varphi_a^{-1}\right) \\
& = a\cdot \omega(D\varphi_a^{-1}\ \cdot\ , X_{H_1})= \omega(\ \cdot\ , aD\varphi_a (X_{H_1}))
\end{align*}
Thus $X_{H_a}=aD\varphi_a (X_{H_1})$ and $H^{-1}_{a}(c)= \varphi_a\left(H_1^{-1}(\frac{c}{a})\right)$, which gives us a $1$-to-$1$ correspondence
\begin{equation}\label{iff}
(v,\eta) \in \Crit\A^{H_a-c}_{q_0,q_1}\quad \iff \quad (\varphi_a^{-1}\circ v, a\eta)\in \Crit\A^{H_1-\frac{c}{a}}_{\sqrt{a}q_0,\sqrt{a} q_1}.
\end{equation}
On the other hand, by \cite[Lem. 4.3]{Wisniewska2024} we know that for $c > \frac{1}{2}a^2|q_0||q_1|$ we have $\#\Crit^+\A^{H_1-\frac{c}{a}}_{\sqrt{a}q_0,\sqrt{a} q_1}= \#\Crit^-\A^{H_1-\frac{c}{a}}_{\sqrt{a}q_0,\sqrt{a} q_1}=1$. Consequently, for $c > \frac{a^2}{2}|q_0||q_1|$ we have $\# \Crit^+\A^{H_a-c}_{q_0,q_1}=\# \Crit^-\A^{H_a-c}_{q_0,q_1}=1$. In particular, $\LRFH_*^+\left(\A^{H_a-c}_{q_0,q_1} \right)$ is well defined, as the complex has only one generator and the boundary operator is $0$.

Moreover, by the proof of \cite[Thm. 1.2]{Wisniewska2024} we know that the Maslov index of the only chord of $\Crit^+\A^{H_1-\frac{c}{a}}_{\sqrt{a}q_0,\sqrt{a} q_1}$ is equal to $\frac{1}{2}$. Since a symplectomorphism preserves the Maslov index, we infer that the Maslov index of the unique element of $\Crit^+\A^{H_a-c}_{q_0,q_1}$ is also $\frac{1}{2}$, which concludes the proof.
\end{proof}
\begin{rem}
By the correspondence given by \eqref{iff} and the result for $H_1$\linebreak in \cite[Prop. 4.9]{Wisniewska2024} we can deduce that for all $a>0$ and all $q_0,q_1\in \R^2$, $q_0\neq q_1$ we have
$$
\lim_{c \searrow 0}\# \Crit^+\A^{H_a-c}_{q_0,q_1}=+\infty.
$$
\end{rem}

We finish this section with the proof of Theorem \ref{thm:posLRFH}:

\vspace*{.5cm}
\noindent\textit{Proof of Theorem \ref{thm:posLRFH}:}
Fix $q_0, q_1 \in \R^2$, $q_0\neq q_1$.
By Lemma \ref{lem:nonEmpty}, Lemma \ref{lem:Chord}, Lemma \ref{lem:Novikov}, Theorem \ref{thm:FloerBounds} and Lemma \ref{lem:positive} we know that
for any $H_a$ as in \eqref{DefHa} and any $h_0\in \mathcal{H}$ as in \eqref{DefH}, there exists an open neighbourhood $\mathcal{O}_{h_0}$ of $0$ in $\{h \in C^\infty(T^*\R^2)\ |\ dh\in C_c^\infty(T^*\R^2) \}$ for which all the assumptions of Proposition \ref{prop:DefLRFH+} are satisfied. Therefore, for any $H_a$ and any $h\in \mathcal{H}$ the associated positive Lagrangian Rabinowitz Floer homology is well defined. Moreover, for any two $h_0,h_1\in \mathcal{H}$ the homology $\LRFH_*^+(\A^{H_a-h_0}_{q_0,q_1})$ is isomorphic to $\LRFH_*^+(\A^{H_a-h_1}_{q_0,q_1})$. Consequently, by Lemma \ref{lem:calculation} we have 
$$
\LRFH_*^+\left(\A^{H_a-h}_{q_0,q_1} \right)=\begin{cases}
\mathbb{Z}_2 & \textrm{for}\quad *=\frac{1}{2},\\
0 & \textrm{otherwise}.
\end{cases}
$$
for any $h \in \mathcal{H}$.

\hfill $\square$

\section{The regularized rotating Kepler problem}
\label{sec:RegHam}
The main aim of this section is to prove Theorem \ref{thm:Kepler}. We will start by analysing basic properties of the regularized Hamiltonian \eqref{regHam}. In \eqref{DefHc} we will construct a function $h \in C^\infty(T^*\R^2)$ and then we will show that $h \in \mathscr{H}$ for the set $\mathscr{H}$ defined as in \eqref{DefH}. Finally, we will show that such constructed $h$ satisfies the assertions of Theorem \ref{thm:Kepler}. 

\subsection{Properties of the regularized Hamiltonian}
In this subsection we will analyse the geometric relations between the regularized Hamiltonian $K_c$ as in \eqref{regHam} and the quadratic Hamiltonian $H_a$ as in \eqref{DefHa}.

First, we will show that the restriction of the zero-level set of $K_c$ to $T^*B(R)$ can be included in a sublevel set of $H_a$:

\begin{lem}\label{lem:alpha}
For $R>0$ set $a:=2R^2$ and let $H_a$ be the corresponding Hamiltonian as in \eqref{DefHa}. Then there exists $b>0$, such that 
$$
K_c^{-1}(0)\cap \{r\leq R\}\subseteq H_a^{-1}((-\infty, b]).
$$
\end{lem}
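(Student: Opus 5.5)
On the compact region $\{r\le R\}$ the claim is that $K_c=0$ forces $|p|$ to be bounded. Then $H_a$, which is continuous, is bounded there. The plan is to turn this into explicit inequalities in polar notation. Write $|q|=r$ and $L:=p_1q_2-p_2q_1$, so that $|L|\le r|p|$. The regularized Hamiltonian \eqref{regHam} reads $K_c=\frac12|p|^2+2r^2L-4r^2c-4$, and $H_a=\frac12|p|^2+aL$ with $a=2R^2$.

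\textbf{Step 1 (bound $|p|$ on the level set).} On $K_c^{-1}(0)\cap\{r\le R\}$ we have
$$
\tfrac12|p|^2 = 4+4cr^2-2r^2L \le 4+4cR^2+2R^3|p|.
$$
This is a quadratic inequality in $|p|$. It gives $|p|\le 2R^3+\sqrt{4R^6+8+8cR^2}=:P$.

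\textbf{Step 2 (bound $H_a$).} With $a=2R^2$, the quantity $aL-2r^2L=2(R^2-r^2)L$ is controlled by $2R^2\cdot R|p|$ on the region. Using $K_c=0$ we get
$$
H_a = H_a-K_c = (a-2r^2)L+4r^2c+4 \le 2R^3P+4cR^2+4 =: b,
$$
and $b>0$ as required. Note that a crude estimate $H_a\le\frac12P^2+aRP$ would work just as well. The cancellation of the kinetic term is convenient but not essential.

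There is no real obstacle here. The only care needed is in the sign conventions: one should check that the $L$ appearing in \eqref{regHam} is the same expression $p_1q_2-p_2q_1$ as in \eqref{DefHa}, which it is. The choice $a=2R^2$ is presumably made so that $a-2r^2\ge0$ on $\{r\le R\}$, which will matter later rather than here. For this lemma any $a$ works, so I would simply record the explicit constant $b$ from Step 2.
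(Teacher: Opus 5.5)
Your proof is correct, but it takes a different route from the paper.

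\textbf{Your route.} You control the angular momentum by $|L|\le r|p|$ and derive an a priori bound $|p|\le P$ on $K_c^{-1}(0)\cap\{r\le R\}$. You then use the identity $H_a=H_a-K_c=(a-2r^2)L+4cr^2+4$ on the level set to get the explicit constant $b=2R^3P+4cR^2+4$.

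\textbf{The paper's route.} It completes squares fibrewise in the orthonormal coordinates $(p_r,p_\theta/r)$ on $T_q^*\R^2$. Over a point with $|q|=r>0$, the set $K_c^{-1}(0)$ is the circle with centre $(0,-2r^3)$ and radius $2\sqrt{r^6+2(cr^2+1)}$. The set $H_a^{-1}((-\infty,b])$ is the disc with centre $(0,-ar)$ and radius $\sqrt{a^2r^2+2b}$. Since $a=2R^2$, the distance between the centres is $r(a-2r^2)\ge 0$. Containment of the circle in the disc then becomes $\sqrt{a^2r^2+2b}\ge 2\sqrt{r^6+2(cr^2+1)}+r(a-2r^2)$ for $r\le R$, i.e.\ condition \eqref{b}, and $b$ exists by continuity on $[0,R]$.

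\textbf{What each buys.} Your version is more elementary, gives a closed-form constant, and sidesteps the degeneracy of polar coordinates at $r=0$. The paper's version is sharp: the inclusion is actually equivalent to \eqref{b}. Moreover, it is the criterion \eqref{b} itself, not merely the inclusion, that is invoked later in the proof of Lemma \ref{lem:inclusion}.

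Nothing is lost with your constant, though. Evaluate $H_a$ at the point $(p_r,p_\theta/r)=\bigl(0,-2r^3+2\sqrt{r^6+2(cr^2+1)}\bigr)$ of the $K_c$-circle, which is the point farthest from the centre of the $H_a$-disc. This shows that your inclusion implies \eqref{b}.
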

\begin{proof}
Using \eqref{DefHa} for $(r,\theta,p_r,p_\theta)\in H_a^{-1}((-\infty,b])$ we can estimate
\begin{align}
\frac{1}{2}p_r^2+ \frac{p_\theta^2}{2r^2}+ap_\theta & \leq b,\nonumber\\
p_r^2+ \left(\frac{p_\theta}{r}+ar \right)^2 & \leq a^2r^2+2b.\label{inq2}
\end{align}
On the other hand, for $(r,\theta,p_r,p_\theta)\in K_c^{-1}(0)$ by \eqref{regHam} we have
\begin{align}
\frac{1}{2}p_r^2+ \frac{p_\theta^2}{2r^2}+2 r^2p_\theta & = 4(cr^2+1),\label{eqKc}\\
p_r^2 +\left(\frac{p_\theta}{r}+2r^3 \right)^2 & = 4(r^6+2(cr^2+1)).\nonumber 
\end{align}
Therefore it suffices to choose $b>0$, such that for all $r\leq R$ we have
$$
\sqrt{a^2r^2+2b} \geq 2\sqrt{r^6+2(cr^2+1)}+r(a-2r^2)
$$
This means it suffices for $b$ to satisfy the following condition:
\begin{equation}\label{b}
b \geq \frac{1}{2}\max_{r \leq R}\left(\left(2\sqrt{r^6+2(cr^2+1)}+r(a-2r^2) \right)^2-a^2r^2\right).
\end{equation}
Since the function on the right hand side is smooth and smooth functions on a closed interval are bounded, we can always find $b>0$ that satisfies the condition above.
\end{proof}

In the following lemma we will prove some property that late on will be useful in understanding how the energy value influences the dynamics on the energy hypersurface.
\begin{lem}\label{lem:pTheta}
Let $K_c$ be the Hamiltonian defined in \eqref{regHam}.
Fix $(r,\theta)\in \R_+\times S^1$. Then for all $c\geq \sqrt{2}r$ and all $(p_r,p_\theta)\in T_r^*\R_+\times T_\theta^*S^1$, such that $K_c(r,\theta,p_r,p_\theta)=0$ we have $p_\theta \leq 2c$.
\end{lem}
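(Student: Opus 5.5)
The plan is to read off the bound on $p_\theta$ directly from the level-set equation \eqref{eqKc}: discard the nonnegative radial kinetic term and solve the resulting quadratic inequality in $p_\theta$. With the sign convention of \eqref{DefHa}, namely $p_1q_2-p_2q_1=p_\theta$ and $|p|^2=p_r^2+p_\theta^2/r^2$, the condition $K_c(r,\theta,p_r,p_\theta)=0$ is exactly \eqref{eqKc}:
$$
\frac{1}{2}p_r^2+\frac{p_\theta^2}{2r^2}+2r^2p_\theta=4(cr^2+1).
$$
Note that $c\geq\sqrt{2}r>0$, so $c$ is positive.

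\textbf{Step 1: drop the radial term.} Since $p_r^2\geq 0$, multiplying by $2r^2>0$ gives
$$
p_\theta^2+4r^4p_\theta-8r^2(cr^2+1)\leq 0.
$$

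\textbf{Step 2: solve the quadratic.} The quadratic in $p_\theta$ has a negative constant term, so its roots are real and of opposite signs. Hence $p_\theta$ lies below the larger root:
$$
p_\theta\leq -2r^4+\sqrt{4r^8+8cr^4+8r^2}.
$$

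\textbf{Step 3: compare with $2c$.} It remains to show $\sqrt{4r^8+8cr^4+8r^2}\leq 2c+2r^4$. The right-hand side is positive because $c>0$, so squaring is an equivalence. Squaring gives
$$
4r^8+8cr^4+8r^2\leq 4c^2+8cr^4+4r^8.
$$
The terms $4r^8$ and $8cr^4$ cancel on both sides, leaving $8r^2\leq 4c^2$. This is $c\geq\sqrt{2}r$, which is precisely the hypothesis. Therefore $p_\theta\leq 2c$.

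There is no genuine obstacle. The only point needing care is the sign convention for $p_\theta$: with the opposite orientation the linear term in the quadratic would flip sign and the estimate would change. The convention is fixed by the identity $p_1q_2-p_2q_1=p_\theta$ already used in \eqref{DefHa} and in \eqref{eqKc}, so I will follow it throughout.
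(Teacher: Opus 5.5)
Your proof is correct and is essentially the paper's argument. The paper also drops the $p_r^2$ term from \eqref{eqKc} and solves the resulting quadratic inequality; it merely substitutes $e:=2c-p_\theta$ and shows that the smaller root in $e$ is nonnegative when $c\geq\sqrt{2}r$, whereas you bound $p_\theta$ by the larger root and compare it with $2c$ directly.
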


\begin{proof}
Denote $e:= 2c - p_\theta$. Then for $(r,\theta, p_r, p_\theta)\in K_c^{-1}(0)$ by \eqref{eqKc} we have
$$
0 \geq \frac{(2c-e)^2}{2r^2}- 2 r^2 e-4 = \frac{1}{r^2}\left( \frac{1}{2}e^2-2(c+r^4)e+2c^2-4r^2\right).
$$
From this quadratic inequality, we can infer that
$$
 e \geq 2(c+r^4)-\sqrt{4(c+r^4)^2+4(2r^2-c^2)}.
$$
Consequently, for $c \geq \sqrt{2}r$ we have $e\geq 0$.
\end{proof}


\subsection{Constructing the perturbation}

Let $\chi:\R \to [0,1]$ be a smooth, not increasing function on $\R$, such that\linebreak $\inf \chi' \geq -2$ and 
\begin{equation}\label{DefChi}
\chi(x):=\begin{cases}
1 & \textrm{for}\quad x\leq 0,\\
0 & \textrm{for}\quad x\geq 1.
\end{cases}
\end{equation}

For constants $a, b, c, d, R_1, R_2>0$ with $R_2>R_1$ we define the following smooth functions on $T^*\R^2$:
\begin{align}
\chi_0(q,p)  & :=\chi \left(\frac{|q|-R_1}{R_2-R_1}\right),\label{DefChi0}\\
\chi_1 (q,p) & := \chi \left(\frac{1}{d}(H_a(q,p)- b)\right),\label{DefChi1}\\
 \varphi(x) & :=\chi_0(x) \chi_1(x)\label{DefVarphi}
\end{align}
\begin{equation}
\begin{aligned}
h (x) & := \varphi \left(a -2|q|^2\right)\left(p_1q_2-p_2q_1-2c\right)+2ac +4\\
& = \varphi \left(a -2r^2\right)(p_\theta-2c)+2ac +4
\label{DefHc}
\end{aligned}
\end{equation}
In the next two lemmas we will prove that $h$ as in \eqref{DefHc} is an element of the set $\mathcal{H}$ defined in \eqref{DefH}:

\begin{lem}\label{lem:dh_compact}
The support of $dh$ as in \eqref{DefHc} is compact.
\end{lem}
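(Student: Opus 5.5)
The plan is to show that $h$ is constant, equal to $2ac+4$, outside the compact set
$$
\supp\varphi \subseteq \{|q|\leq R_2\}\cap H_a^{-1}\big((-\infty, b+d]\big).
$$
Then $dh$ vanishes on the open complement of $\supp\varphi$, so $\supp dh\subseteq\supp\varphi$, and it remains to show that $\supp \varphi$ is compact.

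First, by \eqref{DefChi} the function $\chi_0$ vanishes whenever $|q|\geq R_2$. Likewise $\chi_1$ vanishes whenever $H_a(q,p)\geq b+d$. Since $\varphi=\chi_0\chi_1$, on the set where $|q|\geq R_2$ or $H_a\geq b+d$ the first summand of \eqref{DefHc} is zero, so $h\equiv 2ac+4$ there. The set where $|q|<R_2$ and $H_a<b+d$ is open, and its closure lies in $\{|q|\leq R_2,\ H_a\leq b+d\}$. Hence $\supp dh\subseteq\{|q|\leq R_2,\ H_a\leq b+d\}$.

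Next I check that this set is bounded; it is closed, so it is then compact. I use the completed-square form from the proof of Lemma \ref{lem:alpha}, namely \eqref{inq2}. In Cartesian coordinates this reads
$$
\tfrac12|p|^2 + a(p_1q_2-p_2q_1) \le b+d .
$$
Since $|p_1q_2-p_2q_1|\leq |p||q|\le R_2|p|$, this gives
$$
\tfrac12|p|^2 - aR_2|p|\le b+d,
$$
and therefore
$$
|p|\le aR_2+\sqrt{a^2R_2^2+2(b+d)}.
$$
Together with $|q|\le R_2$, this shows the set is bounded in $T^*\R^2$, as required.

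The only delicate point is that we need $\chi$ to vanish identically on $[1,\infty)$, not merely to tend to $0$; this is exactly what \eqref{DefChi} provides. So there is no real obstacle. The main thing is to use the Cartesian estimate above rather than the polar one, so as to avoid the coordinate singularity at $r=0$.
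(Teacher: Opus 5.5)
Your proof is correct and follows the same route as the paper: you reduce to $\supp dh\subseteq\supp\varphi\subseteq\{r\le R_2\}\cap H_a^{-1}((-\infty,b+d])$ and then show that this closed set is bounded. The only difference is the bounding step. You bound $|p|$ in Cartesian coordinates via $|p_1q_2-p_2q_1|\le |q||p|$, whereas the paper bounds $p_r$ and $p_\theta$ through the completed square \eqref{inq2}. Your version is slightly cleaner, since it controls $|p|$ uniformly up to $r=0$ without having to track $p_\theta/r$.
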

\begin{proof}
By definition
$$
\supp dh \subseteq \supp \varphi = \supp \chi_0 \cap \supp \chi_1 = \left\lbrace r \leq R_2 \right\rbrace \cap H_a^{-1}((-\infty, b+d]).
$$
Using \eqref{inq2} we can deduce that 
\begin{align*}
&\textrm{for}& (r, \theta, p_r, p_\theta) &\in T_{(r,\theta)}(R_+\times S^1) \cap H_a^{-1}((-\infty, b+d]),\\
&\textrm{we have} &|p_r| & \leq \sqrt{a^2r^2+2(b+d)},\\
&\textrm{and}&p_\theta & \in [-ar^2-r\sqrt{a^2r^2+2(b+d)}, -ar^2+ r\sqrt{a^2r^2+2(b+d)}].
\end{align*}
Consequently, 
\begin{align*}
&\textrm{for}& (r, \theta, p_r, p_\theta) &\in \left\lbrace r \leq R_2 \right\rbrace \cap H_a^{-1}((-\infty, b+d]),\\
&\textrm{we have} &|p_r| & \leq \sqrt{a^2R_2^2+2(b+d)},\\
&\textrm{and}&p_\theta & \in \left[-R_2 \left(aR_2+\sqrt{a^2R_2^2+2(b+d)}\right),  R_2\sqrt{2(b+d)}\right].
\end{align*}
\end{proof}
\begin{lem}\label{lem:ContactType}
Fix $c, R_1, R_2>0$ with $R_2>R_1$ and
let $H_a$ be as in \eqref{DefHa} with $a=2R_2^2$. Choose a constant $b \geq 2ac+ 4$ as in Lemma \ref{lem:alpha}. Then there exists $d>0$ big enough for the corresponding $h$ defined as in \eqref{DefHc} to satisfy
$$
h-dh(p\partial_p)+\frac{1}{2}|p|^2 >0.
$$
\end{lem}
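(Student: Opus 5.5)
The plan is to compute the Euler-type expression $h-dh(p\partial_p)$ explicitly, exploiting that $h$ is affine in $p$ away from the cutoff $\varphi$. Then we show that the only term that can be negative is supported in the thin shell $H_a^{-1}([b,b+d])$, where it can be absorbed by $\frac12|p|^2$ once $d$ is large.

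First, write $g:=(a-2r^2)(p_\theta-2c)$, so that $h=\varphi g+2ac+4$. Since $p_\theta$ is homogeneous of degree one in $p$ and $r$ does not depend on $p$, we have $g-dg(p\partial_p)=-2c(a-2r^2)$. Since $\chi_0$ depends only on $q$, we have $d\varphi(p\partial_p)=\chi_0\, d\chi_1(p\partial_p)$. Using the identity $dH_a(p\partial_p)=H_a+\frac12|p|^2$ from the section on properties of Hamiltonians, we get
\begin{equation*}
d\chi_1(p\partial_p)=\frac{1}{d}\chi'\!\left(\tfrac{H_a-b}{d}\right)\Big(H_a+\tfrac12|p|^2\Big).
\end{equation*}
This gives the identity
\begin{equation*}
h-dh(p\partial_p)+\tfrac12|p|^2=\Big(2ac+4-2c\varphi(a-2r^2)\Big)-\frac{\chi_0\,g}{d}\chi'\!\left(\tfrac{H_a-b}{d}\right)\Big(H_a+\tfrac12|p|^2\Big)+\tfrac12|p|^2 .
\end{equation*}
On $\supp\varphi\subseteq\{r\le R_2\}$ we have $0\le a-2r^2\le a$, because $a=2R_2^2$. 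Off $\supp\varphi$ we have $\varphi=0$. Hence $0\le\varphi\le1$ gives that the first bracket is at least $4$ everywhere.

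Next, since $\chi'\le0$, the middle term is nonnegative wherever $g\ge0$, i.e.\ wherever $p_\theta\ge 2c$. So the only dangerous region is
\begin{equation*}
\{r\le R_2\}\cap H_a^{-1}([b,b+d])\cap\{p_\theta<2c\},
\end{equation*}
which is the region where $\chi'\neq0$ and $g<0$. There I would bound $|g|$ using the definition of $H_a$. Writing $a p_\theta=H_a-\frac12|p|^2\ge b-\frac12|p|^2$ and using $b\ge 2ac+4$, we obtain
\begin{equation*}
0< (a-2r^2)(2c-p_\theta)\le 2ac-b+\tfrac12|p|^2\le \tfrac12|p|^2-4 .
\end{equation*}
Combined with $|\chi'|\le2$ and $H_a\le b+d$, this bounds the middle term by $\frac{2}{d}\big(\frac12|p|^2-4\big)\big(b+d+\frac12|p|^2\big)$.

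The main obstacle is exactly this step. The crude estimate above gives a prefactor of about $2$ in front of $\frac12|p|^2$, while we need a prefactor strictly below $1$ to absorb the term into $4+\frac12|p|^2$. To fix this, I would refine the bounds inside the shell by keeping $H_a+\frac12|p|^2=|p|^2+ap_\theta$ rather than replacing $H_a$ by $b+d$. On $p_\theta<2c$ this gives $H_a+\frac12|p|^2<|p|^2+2ac$. In addition, I would use the compactness bounds from Lemma~\ref{lem:dh_compact}: on the shell, $|p|^2\lesssim a^2R_2^2+2(b+d)$ and $|p_\theta|\lesssim R_2\sqrt{b+d}$. The aim is to show that the product $|g|\,|\chi'|\,(H_a+\frac12|p|^2)/d$ is at most $(1-\kappa)\big(\frac12|p|^2+4\big)$ for some $\kappa>0$ once $d$ is large. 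The terms coming from $b$ and $c$ are $O(1/d)$ relative to $\frac12|p|^2$, so they vanish as $d\to\infty$. If the pure $|p|^2\cdot|p|^2/d$ term still does not close, I would exploit the freedom in choosing $\chi$ (only the condition $\inf\chi'\ge-2$ is imposed, so I would take $\chi'$ close to $-1$ on most of $[0,1]$). Alternatively, I would enlarge $b$, which Lemma~\ref{lem:alpha} allows since \eqref{b} is only a lower bound. Either choice gains the missing margin, and then fixing such a $d$ yields the strict positivity claimed.
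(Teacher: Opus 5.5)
Your reduction is correct: the identity for $h-dh(p\partial_p)+\frac12|p|^2$, the lower bound $4$ for the first bracket, and the localisation of the only negative contribution to $\{r\le R_2\}\cap H_a^{-1}([b,b+d])\cap\{p_\theta<2c\}$ all check out. The gap is that the one nontrivial step, the estimate on that region, is never carried out. You state it as an aim and then hedge.

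The obstruction is your bound $(a-2r^2)(2c-p_\theta)\le\frac12|p|^2-4$. It is quadratic in $|p|$, so it overestimates this factor by a factor of order $\sqrt d$. On the shell $\frac12|p|^2=H_a-ap_\theta\approx b+td$, so with this bound the negative term is roughly $2t\,|\chi'(t)|\cdot\frac12|p|^2$. If $\chi'\approx-1$, as you propose, this cannot be absorbed for $t>\frac12$. Enlarging $b$ does not change this ratio as $d\to\infty$. So neither remedy you name closes the argument, and the lemma is meant to hold for the given $\chi$ and $b$ in any case.

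The missing idea is to bound $2c-p_\theta$ linearly in $|p|$ rather than quadratically. You have $|p_\theta|\le r|p|\le R_2|p|$. On $\{r\le R_2\}\cap H_a^{-1}((-\infty,b+d])$ one has $\frac12|p|^2-aR_2|p|\le b+d$, hence $|p_\theta|\le R_2\bigl(aR_2+\sqrt{a^2R_2^2+2(b+d)}\bigr)$. This is exactly the $p_\theta$-range of Lemma \ref{lem:dh_compact}, which you quote but never apply to $g$. Thus on the dangerous region
\[
0<(a-2r^2)(2c-p_\theta)\le G_d:=a\Bigl(2c+R_2\bigl(aR_2+\sqrt{a^2R_2^2+2(b+d)}\bigr)\Bigr)=O(\sqrt d).
\]
Combine this with your own observation $0<H_a+\frac12|p|^2=|p|^2+ap_\theta<|p|^2+2ac$ and with $|\chi'|\le 2$. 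The negative term is then at most $\frac{2G_d}{d}\bigl(|p|^2+2ac\bigr)$. Since $G_d/d\to0$, any $d$ with $2G_d/d\le\frac12$ and $4acG_d/d<4$ makes it strictly smaller than $\frac12|p|^2+4$. This works for every admissible $\chi$ and $b$, with no tuning.

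This is the same mechanism as the paper's proof. The paper parametrizes the shell, substitutes $H_a=b+td$, and uses $p_\theta\ge-r\bigl(2ar+\sqrt{2(b+d)}\bigr)$ to show that the negative contribution is $O(t\sqrt d)$, against the $td$ supplied by $\frac12|p|^2$.
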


\begin{proof}
Using \eqref{DefHc} we can calculate
\begin{align*}
dh(p\partial_p) & = \varphi(a-2r^2)p_\theta+(a-2r^2)(p_\theta-c)d\varphi(p\partial_p),\\
h-dh(p\partial_p) & = 4(1+\varphi cr^2) +2ac(1-\varphi) -(a-2r^2)(p_\theta-c)d\varphi(p\partial_p).
\end{align*}
Therefore we can infer that $h-dh(p\partial_p)\geq 4$ on $T^*\R^2 \setminus \supp d\varphi$. In fact, since $d\varphi(p\partial_p) = \chi_0 d\chi_1(p\partial_p)$ we have $h-dh(p\partial_p)\geq 4$ on
$$
T^*\R^2 \setminus \left(\supp d\chi_1\cap \supp \chi_0\right)= T^*\R^2 \setminus \left( \{ r\leq R_2\}\cap H_a^{-1}([b, b+d])\right).
$$
Now using the definition of $\chi_1$ we can calculate
$$
d\chi_1(p\partial_p)=\frac{1}{d} \chi' dH_a(p\partial_p)=\frac{1}{d}\chi' \left(\frac{1}{2}|p|^2+H_a \right).
$$
By assumption $b, d>0$, $\chi_0 \geq 0$ and $\chi'\leq 0$, hence we obtain $d\varphi(p\partial_p)\leq 0$ and therefore we can infer that $h-dh(p\partial_p)\geq 4$ on
$$
T^*\R^2 \setminus \left( \{ r\leq R_2\}\cap\left\lbrace p_\theta \leq c\right\rbrace\cap H_a^{-1}([b, b+d])\right).
$$

Using \eqref{inq2} we can parametrize $H_a^{-1}([b, b+d])$ as an image of the the following map from $\R_+\times S^1\times S^1\times [0,1]$ to $T^*(\R_+\times S^1)$:
$$
(r,\theta, \phi, t)\mapsto \left(r,\theta, \sqrt{a^2r^2+2(b+td)}\cos \phi, r\left(\sqrt{a^2r^2+2(b+td)}\sin \phi - ar\right)\right).
$$

Recall that by assumption $0\geq \chi'\geq -2$.
Hence on $\{ r\leq R_2\}\cap\left\lbrace p_\theta \leq c\right\rbrace\cap H_a^{-1}([b, b+d])$ we can calculate:
\begin{align*}
h & -dh(p\partial_p) +\frac{1}{2}|p|^2 = \frac{1}{2}|p|^2+ 4(1+\varphi cr^2) +2ac(1-\varphi)\\ & +\frac{1}{d}\chi_0\chi'(a-2r^2)(c-p_\theta)\left(\frac{1}{2}|p|^2+H_a\right)\\
& \geq \frac{1}{2}|p|^2+ 4 -\frac{2}{d}(a-2r^2)(c-p_\theta)\left(\frac{1}{2}|p|^2+H_a\right)\\
& = H_a-ap_\theta+ 4 -\frac{2}{d}(a-2r^2)(ap_\theta^2-(2H_a-a)p_\theta+2cH_a)\\
& \geq  H_a-ap_\theta+ 4 -\frac{2}{d}(a-2r^2)(2ar^2(H_a-ap_\theta)-(2H_a-a)p_\theta+2cH_a)\\
& = H_a-ap_\theta+ 4 -\frac{2}{d}(a-2r^2)(2(ar^2+c)H_a-(2H_a+2a^2r^2-a)p_\theta)\\
& = H_a - \left(a-4(a-2r^2)\frac{H_a}{d} \right)p_\theta - 4(a-2r^2)(ar^2+c)\frac{H_a}{d}\\
&+4+2a(a-2r^2)(2ar^2-1)\frac{p_\theta}{d}.
\end{align*}
Recall, that by the parametrization of $H_a([b, b+d])$ we have
\begin{align*}
p_\theta & = r\left(\sqrt{a^2r^2+2(b+td)}\sin \phi - ar \right)\\
& \geq - r\left(\sqrt{a^2r^2+2(b+td)}+ ar \right)\\
& \geq -r \left( 2ar +\sqrt{2(b+d)}\right)
\end{align*}
Using this inequality and the substitution $H_a=b+td$ we obtain:
\begin{align*}
h & -dh(p\partial_p) +\frac{1}{2}|p|^2  \geq b+td - ap_\theta +4(a-2r^2)t p_\theta - 4(a-2r^2)(ar^2+c)t\\
&+4+2(a-2r^2)\left(a (2ar^2-1)+ 2b \right)\frac{p_\theta}{d}- 4(a-2r^2)(ar^2+c) \frac{b}{d}\\
& = b -ac +a(c-p_\theta)-4tr(a-2r^2)\left( 2ar +\sqrt{2(b+d)}\right)
- 4(a-2r^2)(ar^2+c)t\\
&+4+2(a-2r^2)\left(a (2ar^2-1)+ 2b \right)\frac{p_\theta}{d}- 4(a-2r^2)(ar^2+c) \frac{b}{d}\\
& \geq 8+ac +t\left(d-4(a-2r^2)\left(3ar^2+c+ r\sqrt{2(b+d)}\right) \right)\\
&+2(a-2r^2)\left(a (2ar^2-1)+ 2b \right)\frac{p_\theta}{d}- 4(a-2r^2)(ar^2+c) \frac{b}{d}.
\end{align*}
Now, observe that for $r\leq R_2=\sqrt{\frac{a}{2}}$ we have
\begin{align*}
d-4(a-2r^2)\left(3ar^2+c+ r\sqrt{(b+d)}\right) &\geq d-2a(3a^2+2c+2\sqrt{a(b+d)}),\\
\lim_{d\to +\infty}\left(d-2a(3a^2+2c+2\sqrt{a(b+d)}) \right) & = + \infty,\\
p_\theta = r\sqrt{a^2r^2+2(b+td)}\sin\phi-ar^2&,\\
\lim_{d\to +\infty}\frac{|p_\theta|}{d} (a-2r^2)|a (2ar^2-1)+ 2b| & =0,\\
\lim_{d\to +\infty}\frac{1}{d} b(a-2r^2)(ar^2+c) & =0.
\end{align*}
We conclude that for $d>0$ big enough, we have $h-dh(\partial_p)+\frac{1}{2}|p|^2>0$ on the whole $T^*\R^2$.
\end{proof}

By Lemma \ref{lem:dh_compact} and Lemma \ref{lem:ContactType} we know that if we fix $c, R_1, R_2>0$ with $R_2>R_1$ and $a:= 2R_2^2$ then there exist $b, d>0$ big enough, such that the corresponding $h$ defined as in \eqref{DefHc} belongs to the set $\mathcal{H}$ defined as in \eqref{DefH}. Such constructed $h$ is our candidate for Theorem \ref{thm:Kepler}. However, before we prove the Theorem, we will first prove the following two lemmas:

\begin{lem}\label{lem:inclusion}
Fix $c, R_1, R_2>0$ with $R_2>R_1$ and
let $H_a$ be as in \eqref{DefHa} with $a=2R_2^2$. Choose a constant $b \geq 2ac+ 4$ as in Lemma \ref{lem:alpha}. Then for the corresponding $h$ defined as in \eqref{DefHc} we have
$$
(H_a-h)^{-1}(0)\subseteq H_a^{-1}((-\infty, b]).
$$
\end{lem}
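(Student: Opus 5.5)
The plan is to rewrite $H_a-h$ as a convex combination of two simpler Hamiltonians and show both are positive outside $H_a^{-1}((-\infty,b])$.

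\emph{Step 1: the identity.} By \eqref{DefHc} and \eqref{DefHa},
$$
H_a-h=\tfrac12 p_r^2+\tfrac{p_\theta^2}{2r^2}+ap_\theta-\varphi\,(a-2r^2)(p_\theta-2c)-2ac-4 .
$$
For $\varphi=1$ the momentum-linear part is $ap_\theta-(a-2r^2)(p_\theta-2c)-2ac=2r^2p_\theta-4cr^2$, which is exactly the corresponding part of $K_c$ in polar coordinates (see \eqref{regHam} and \eqref{eqKc}). Hence
$$
H_a-h=\varphi\,K_c+(1-\varphi)\,(H_a-2ac-4),\qquad \varphi=\chi_0\chi_1\in[0,1].
$$
So it suffices to show that at every point with $H_a>b$ both $K_c$ (wherever $\varphi>0$) and $H_a-2ac-4$ are strictly positive. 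Then $H_a-h>0$ there, which gives the inclusion.

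\emph{Step 2: the second term.} Since $b\ge 2ac+4$, the inequality $H_a>b$ immediately gives $H_a-2ac-4>0$. In particular, in the region $H_a\geq b+d$ we have $\chi_1=0$, hence $\varphi=0$, and nothing more is needed there.

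\emph{Step 3: the first term, the main step.} Where $\varphi>0$ we have $\chi_0>0$, so $r\le R_2$. We must show $K_c>0$ on $\{r\le R_2\}\cap\{H_a>b\}$. Here we use Lemma \ref{lem:alpha} with $R=R_2$ (so $a=2R_2^2$, with $b$ chosen as there). It gives $K_c\neq 0$ on $\{r\le R_2,\ H_a>b\}$.

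Fix a base point $(r,\theta)$ with $r\le R_2$. By \eqref{inq2}, the fibre slice $\{H_a>b\}$ is the complement of a closed disc in the $(p_r,p_\theta/r)$-plane, hence connected and unbounded. The function $K_c$ restricted to this slice is continuous and nonvanishing, and it tends to $+\infty$ as $|p|\to\infty$ because it is quadratic with positive leading term. It is therefore positive on the whole slice.

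For $r=0$ (the origin), the same argument applies directly in Cartesian momenta: $K_c(0,p)=\tfrac12|p|^2-4$ and $H_a(0,p)=\tfrac12|p|^2$, so $H_a>b\ge4$ forces $K_c>0$.

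The only subtle point I expect is checking that Lemma \ref{lem:alpha} really is applied with the same radius $R_2$ and with the same $b$ (which must also satisfy $b\ge 2ac+4$). Both can be arranged by taking $b$ to be the maximum of the two requirements.

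Combining Steps 1–3, $H_a-h>0$ whenever $H_a>b$. Hence $(H_a-h)^{-1}(0)\subseteq H_a^{-1}((-\infty,b])$.
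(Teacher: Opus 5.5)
Your proof is correct, but it is organized differently from the paper's.

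\textbf{The paper's route.} The paper argues pointwise on $(H_a-h)^{-1}(0)$ with $r\le R_2$, working in the $(p_r,p_\theta/r)$-plane. It completes the square and expands $\bigl(2r^2\varphi+a(1-\varphi)\bigr)^2=\varphi(2r^2)^2+(1-\varphi)a^2-\varphi(1-\varphi)(a-2r^2)^2$. Using $4ac+8\le 2b$ and the inequality \eqref{b} from inside the proof of Lemma~\ref{lem:alpha}, it bounds the squared radius by $\bigl(\sqrt{2b+a^2r^2}-r\varphi(a-2r^2)\bigr)^2$. It finishes with a triangle inequality, since the two centres are at distance $r\varphi(a-2r^2)$.

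\textbf{Your route.} You observe that $H_a-h=\varphi K_c+(1-\varphi)(H_a-2ac-4)$ pointwise, with $\varphi\in[0,1]$. The paper writes this same expansion (unregrouped) at the start of its proof and in \eqref{Ha-hc}, but never uses it as a convex combination. It then suffices to check the sign of each summand on $\{H_a>b\}$. The second summand is positive immediately from $b\ge 2ac+4$. For the first, you use only the conclusion of Lemma~\ref{lem:alpha}, and upgrade $K_c\neq 0$ to $K_c>0$ via connectedness of the complement of a disc in each fibre and coercivity of $K_c$ in $p$.

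\textbf{Comparison.} Your argument is shorter and more conceptual: it explains the inclusion as an interpolation between two functions that are both positive outside $\{H_a\le b\}$. It uses Lemma~\ref{lem:alpha} as a black box rather than reopening its proof. It also treats $r=0$ explicitly, where the paper's polar computation (which divides by $r$) is not literally valid. The paper's approach avoids the topological step and is a self-contained algebraic estimate. That computation has a slip: the term $4\varphi(r^6+2r^2+2)$ should read $4\varphi(r^6+2cr^2+2)$.

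The consistency issue you flag (same radius $R_2$ and same $b$) is already built into the hypotheses of the statement.
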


\begin{proof}

Consider $(r,\theta, p_r, p_\theta)\in (H_a-h)^{-1}(0)$. Then
\begin{align*}
\frac{1}{2}p_r^2+ \frac{p_\theta^2}{2r^2}+\left(2r^2\varphi + a(1-\varphi)\right)p_\theta & = 4(cr^2\varphi+1) +2ac(1-\varphi)\\
p_r^2+\left(\frac{p_\theta}{r}+ r\left(2r^2\varphi + a(1-\varphi)\right)\right)^2 & = r^2\left(2r^2\varphi + a(1-\varphi)\right)^2+ 8(cr^2\varphi+1) +4ac(1-\varphi)
\end{align*}
By definition
$$
\supp \varphi = \supp \chi_0 \cap \supp \chi_1 = \{ r\leq R_2\} \cap H_a^{-1}((-\infty, b+d]).
$$
Consequently, on $\{r\geq R_2\}$ we have $H_a-h = H_a -2ac-4$,
which proves that 
$$
(H_a-h)^{-1}(0)\cap \{r\geq R_2\} \subseteq H_a^{-1}(2ac+4)\subseteq H_a^{-1}((-\infty, b]).
$$

Now for $(r,\theta, p_r, p_\theta)\in (H_a-h)^{-1}(0)\cap \{r\leq R_2\}$ we can use \eqref{b} and calculate
\begin{align*}
4ac(1 & -\varphi) +8(cr^2\varphi+1) + r^2\left(2r^2\varphi + a(1-\varphi)\right)^2 =\\
& = (1-\varphi)\left(4ac+8+a^2r^2\right)+4\varphi\left(r^6+ 2r^2+2\right)-r^2\varphi(1-\varphi)\left(a-2r^2 \right)^2\\
& \leq (1-\varphi)\left(a^2 r^2+2b\right)+\varphi\left(\sqrt{2b+ a^2 r^2}-r\left(a-2r^2 \right) \right)^2-r^2\varphi(1-\varphi)\left(a-2r^2 \right)^2\\
& = 2b+ a^2 r^2+\varphi^2 r^2 \left(a-2r^2 \right)^2-2\varphi r(a-2r^2) \sqrt{2b+ a^2 r^2}\\
& = \left( \sqrt{2b+ a^2 r^2} - r\varphi (a-2r^2)\right)^2
\end{align*}
Consequently, for $(r,\theta, p_r, p_\theta)\in (H_a-h)^{-1}(0)\cap \{r\leq R_2\}$ we have
\begin{align*}
& \sqrt{p_r^2 +\left(\frac{p_\theta}{r} + ar\right)^2} \leq \sqrt{ p_r^2+\left(\frac{p_r}{r}+r\left(2r^2\varphi + a(1-\varphi)\right) \right)^2}+r\varphi\left(a-2r^2\right)\\
& = \sqrt{ r^2\left(2r^2\varphi + a(1-\varphi)\right)^2+ 8(cr^2\varphi+1) +4ac(1-\varphi)}+r\varphi\left(a-2r^2\right)\\
& = \left|\sqrt{2b+ a^2 r^2} - r\varphi (a-2r^2)\right|+r\varphi\left(a-2r^2\right)=\sqrt{2b+ a^2 r^2},
\end{align*}
which concludes the proof.
\end{proof}

\begin{lem}\label{lem:empty}
Fix $c, d, R_1, R_2>0$ with $R_2>R_1$ and
let $H_a$ be as in \eqref{DefHa} with $a=2R_2^2$. Choose $b\geq 2ac+4$ as in Lemma \ref{lem:alpha}. Let $h$ be as in \eqref{DefHc}. Then
$$
(H_a-h)^{-1}(0)\cap \{ \{H_a-h, r\}=0\}\cap \{ \{H_a-h, \{H_a-h, r\}\}\leq 0\}\cap \{2c\geq p_\theta\} = \emptyset.
$$
\end{lem}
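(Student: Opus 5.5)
The plan is to work in polar coordinates and write $G:=H_a-h$ explicitly as
$$
G=\tfrac12 p_r^2+\tfrac{p_\theta^2}{2r^2}+ap_\theta-\varphi\,(a-2r^2)(p_\theta-2c)-2ac-4 .
$$
Neither $H_a$ nor $\chi_0$ depends on $\theta$, so $G$ is $\theta$-independent. Along the flow we have $\dot r=\partial_{p_r}G$ and $\dot p_r=-\partial_r G$.

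\textbf{Step 1: the factor $F$.} Since $\varphi=\chi_0\chi_1$ and $\partial_{p_r}H_a=p_r$, I compute
$$
\{G,r\}=\partial_{p_r}G=p_r\,F,\qquad F:=1-\tfrac1d(a-2r^2)(p_\theta-2c)\chi_0\chi_1'.
$$
On $\supp\chi_0$ we have $r\le R_2$, so $a-2r^2\ge0$. On the region $\{p_\theta\le 2c\}$ the product $(p_\theta-2c)\chi_1'$ is nonnegative. The same factor $F$ reappears in $\partial_r G$, through $\partial_r H_a=-p_\theta^2/r^3$:
$$
\partial_r G=-F\,\frac{p_\theta^2}{r^3}-N,\qquad N:=(p_\theta-2c)\Big(-4r\varphi+\tfrac{(a-2r^2)\chi_0'\chi_1}{R_2-R_1}\Big).
$$
Both summands of $N$ are products of two nonpositive factors, so $N\ge0$ on $\{p_\theta\le2c\}$.

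\textbf{Step 2: reduce to a sign condition.} Assume $F>0$. Then $\{G,r\}=0$ forces $p_r=0$. At such a point,
$$
\{G,\{G,r\}\}=F\,\dot p_r=-F\,\partial_rG=F\Big(F\frac{p_\theta^2}{r^3}+N\Big)\ge0,
$$
with equality only if $p_\theta=0$ and $N=0$. So it remains to exclude the configuration $p_r=p_\theta=0$, $N=0$ on $G^{-1}(0)$. With $p_\theta=0$ we have $p_\theta-2c=-2c\neq0$, so $N=0$ forces $\varphi=0$. Then $G=\tfrac12p_r^2+\tfrac{p_\theta^2}{2r^2}+ap_\theta-2ac-4=-2ac-4<0$, which contradicts $G=0$. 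The point $r=0$, where polar coordinates degenerate, I would treat separately in Cartesian coordinates, or dismiss because $r$ cannot attain an interior maximum there.

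\textbf{Main obstacle: positivity of $F$.} The sign analysis gives only $F\le1$, since the correction term is subtracted. My first attempt is to bound it on $\supp\chi_0\chi_1'\subseteq\{r\le R_2\}\cap H_a^{-1}([b,b+d])$. Using $|\chi'|\le2$, $a-2r^2\le a$, and the bound $|p_\theta|\le R_2\big(aR_2+\sqrt{a^2R_2^2+2(b+d)}\big)$ from Lemma \ref{lem:dh_compact}, the correction is $O\big(a(aR_2^2+R_2\sqrt{b+d}+c)/d\big)$, which tends to $0$ as $d\to\infty$. Hence $F>0$ for $d$ large, the same largeness already used in Lemma \ref{lem:ContactType}, and I would state the lemma for that choice of $d$.

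If $d$ must be arbitrary, the fallback is to use the constraint $G=0$ together with the contact-type inequality of Lemma \ref{lem:ContactType}. This inequality is equivalent to $p_r\partial_{p_r}G+p_\theta\partial_{p_\theta}G>G=0$, and I would try to extract $F>0$ from it on the relevant set. This second route is the step I expect to need the most care.
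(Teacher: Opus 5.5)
Your computation of $\{G,r\}=p_rF$ and $\partial_rG=-F\,p_\theta^2/r^3-N$ is correct and is essentially the paper's argument. So are the sign analysis giving $N\ge 0$ on $\{p_\theta\le 2c\}$ and the exclusion of $p_r=p_\theta=0$ via $G<0$ there. The gap is exactly the step you flag: you never establish $F>0$ for the $d$ in the statement. The lemma fixes an \emph{arbitrary} $d>0$, but your main route only works for $d$ large, and you propose to change the statement accordingly. Your fallback does not escape this. The contact-type inequality of Lemma \ref{lem:ContactType} is itself only proved for $d$ sufficiently large. Moreover, at $p_r=0$ it only yields $p_\theta\,\partial_{p_\theta}G>0$, which says nothing about $F$. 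As written, you prove a weaker lemma. That would still suffice for Theorem \ref{thm:Kepler}, where $d$ is taken large anyway, but it is not the stated one.

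The missing idea is Lemma \ref{lem:inclusion}. For every $d>0$ it gives $(H_a-h)^{-1}(0)\subseteq H_a^{-1}((-\infty,b])$; its proof only uses $b\ge 2ac+4$ and \eqref{b}, not $d$. On that set the argument $(H_a-b)/d$ of $\chi$ is $\le 0$, where $\chi\equiv 1$. Hence $\chi'$ vanishes there, including at the boundary value $0$ by smoothness. So $F=1$ at every point of $G^{-1}(0)$, and no quantitative control of the correction term is needed. With $F=1$ your Step 2 goes through verbatim. First, $\{G,r\}=0$ forces $p_r=0$. Then $\{G,\{G,r\}\}=F\{G,p_r\}+p_r\{G,F\}=p_\theta^2/r^3+N$. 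This is strictly positive, because $p_\theta=0$ would put the point in $\{p_r=p_\theta=0\}\cap G^{-1}(0)=\emptyset$. This is precisely how the paper argues: on the zero level set it writes $\{H_a-h,r\}=p_r$ and $\{H_a-h,\{H_a-h,r\}\}=\{H_a-h,p_r\}$, and then does the same sign analysis you do.
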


\begin{proof}

Using \eqref{DefHa} and \eqref{DefHc} we can calculate
\begin{align}
H_a-h & = \frac{1}{2}p_r^2+ \frac{p_\theta^2}{2r^2}+ap_\theta+\varphi \left(a -2r^2\right)(2c-p_\theta)-2ac -4\label{Ha-hc}\\
& =\frac{1}{2}p_r^2+ \frac{p_\theta^2}{2r^2}+\left(2r^2\varphi + a(1-\varphi)\right)p_\theta-4cr^2\varphi -2ac(1-\varphi)-4\nonumber\\
\{H_a-h, r\} & = d(H_a-h)(\partial_{p_r}) = p_r +\left((2r^2-a)p_\theta-4cr^2 +2ac\right) d\varphi(\partial_{p_r})\nonumber\\
\{H_a-h, p_r\} & = -d(H_a-h)(\partial_{r}) = \frac{p_\theta^2}{r^3}+4 r\varphi(2c-p_\theta) -\left(a -2r^2\right)(2c-p_\theta) d\varphi(\partial_r)\nonumber
\end{align}
By definition $d\varphi(\partial_{p_r})=\chi_0d\chi_1(\partial_{p_r})$ and $H_a^{-1}((-\infty, b])\cap \supp d\chi_1 = \emptyset$. Moreover, by Lemma \ref{lem:inclusion} we have $(H_a-h)^{-1}(0)\subseteq H_a^{-1}((-\infty, b])$. 
Hence on\linebreak $(H_a-h)^{-1}(0)$ we have 
$$
\{H_a-h, r\}=p_r\quad\textrm{and}\quad\{H_a-h, \{H_a-h, r\}\}= \{H_a-h, p_r\}.
$$
Consequently, $\{\{H_a-h, r\}=0\}\cap (H_a-h)^{-1}(0)\subseteq \{p_r=0\}$. Using \eqref{Ha-hc} we can calculate:
$$
(H_a-h)(r,\theta, 0, 0) =-4r^2\varphi-2ac(1-\varphi)  -4<0.
$$
Hence $\{p_r=p_\theta=0\}\cap (H_a-h)^{-1}(0)=\emptyset$. Consequently, we have
$$
(H_a-h)^{-1}(0) \cap \{\{H_a-h, r\}=0\} \cap \{ \{H_a-h, \{H_a-h, r\}\}\leq 0\}\cap \{p_\theta \leq 2c \}\subseteq d\varphi .
$$
However on $H_a^{-1}((-\infty, b])$ we have
$-d\varphi(\partial_r)=-d\chi_0(\partial_r) = - \frac{\chi'}{R_2-R_1} \geq 0$.
Therefore, 
$$
\{H_a-h, p_r\}> 0\quad\textrm{on}\quad  H_a^{-1}((-\infty, b])\cap \{p_\theta\leq 2c \}\cap \{p_\theta \neq 0 \}.
$$
We can conclude that $\{H_a-h, p_r\}> 0$ on the set
$$
 (H_a-h)^{-1}(0)\cap \{\{H_a-h, r\}=0\}\cap \{ p_\theta \leq 2c\}.
$$
\end{proof}

The following proof follows closely the methodology introduced in \cite{Wisniewska2025}:

\vspace*{.5cm}
\noindent \textit{Proof of Theorem \ref{thm:Kepler}:}

Fix $q_0, q_1 \in \R^2$ and set $R_1:=\max\{|q_0|,|q_1|\}$. Choose $c>\sqrt{2}R_1$ and $R_2 > R_1$. Let $H_a$ be as in \eqref{DefHa} with $a:=2R_2^2$. Choose constants $b>0$ and $d>0$ as in Lemma \ref{lem:alpha} and Lemma \ref{lem:ContactType}, respectively. With those constants define $h$ as in \eqref{DefHc}. By Lemma \ref{lem:inclusion} and Lemma \ref{lem:ContactType} we know that $h$ is an element of the set $\mathcal{H}$ as defined in \eqref{DefH}. We will show that $h$ satisfies assertions of Theorem \ref{thm:Kepler}.

Denote by $B(R_1)$ a ball in $\R^2$ or radius $R_1$ and center in the origin.
We will first show that
$$
\left\lbrace (v,\eta)\in \Crit\A^{H_a-h}_{q_0,q_1}\ \big|\ v([0,1])\subseteq T^*B(R_1)\right\rbrace \subseteq \Crit\A^{K_c}_{q_0,q_1}.
$$
Later we will prove that
$$
\left\lbrace (v,\eta)\in \Crit\A^{H_a-h}_{q_0,q_1}\ \big|\ v([0,1])\subseteq T^*B(R_1)\right\rbrace = \Crit\A^{H_a-h}_{q_0,q_1}.
$$

Consider $(v,\eta)\in \Crit\A^{H_a-h}_{q_0,q_1}$, such that $v([0,1])\subseteq T^*B(R_1)$. By assumption $v([0,1])\subseteq T^*B(R_1) \cap (H_a-h)^{-1}(0)$. By Lemma \ref{lem:inclusion} we know that\linebreak $(H_a-h)^{-1}(0) \subseteq H_a^{-1}((-\infty, b])$. Hence
$$
v([0,1])\subseteq T^*B(R_1) \cap H_a^{-1}((-\infty, b])= \varphi^{-1}(1),
$$
for the function $\varphi$ defined in \eqref{DefVarphi}. By construction of $h$ as in \eqref{DefHc}\linebreak the Hamiltonians $K_c$ as in \eqref{regHam} and $H_a-h$ agree on $\varphi^{-1}(1)$. Consequently  $(v,\eta)\in \Crit\A^{K_c}_{q_0,q_1}$, which proves the first inclusion.

We will now prove the second inclusion by contradiction. Suppose $(v,\eta)\in  \Crit\A^{H_a-h}_{q_0,q_1}$, such that $v([0,1])\setminus T^*B(R_1)\neq \emptyset$. Denote $v(t)=(r(t), \theta(t), p_r(t), p_\theta(t))$ in the spherical coordinates on $\R^2$. Then there exists $t_0\in (0,1)$, such that $r(t_0)=\max_{t\in [0,1]}r(t)>R_1$. For such $t_0$ we would have
\begin{align*}
r'(t_0) & =\{H_a-h, r\}\circ v(t_0)=0,\\
r''(t_0) & = \{H_a-h,\{H_a-h, r\}\}\circ v (t_0)\leq 0.
\end{align*}
In other words, we would have
$$
v(t_0) \in (H_a-h)^{-1}(0)\cap \{ \{H_a-h, r\}=0\}\cap \{ \{H_a-h, \{H_a-h, r\}\}\leq 0\}.
$$

Now observe that by construction $H_a-h$ does not depend on $\theta$, thus\linebreak $\{H_a-h,p_\theta\}=0$ and consequently we have $p_\theta (t_0)=p_\theta(0)=p_\theta(1)$. Since $R_1 = \max\{|q_0|,|q_1|\}$ therefore by Lemma \ref{lem:inclusion} we have
$$
v(0), v(1)\in T^*B(R_1)\cap (H_a-1)^{-1}(0)\subseteq T^*B(R_1) \cap H_a^{-1}((-\infty, b])= \varphi^{-1}(1).
$$
As the two Hamiltonians $H_a-h$ and $K_c$ agree on $\varphi^{-1}(1)$, we have $v(0), v(1)\in K_c^{-1}(0)$.
Now since we have chosen $c> \sqrt{2}R_1 = \sqrt{2}\max\{|q_0|,|q_1|\}$, then by Lemma \ref{lem:pTheta} we know that $p_\theta(0)=p_\theta(1)\leq 2c$. Consequently,
$$
v(t_0) \in (H_a-h)^{-1}(0)\cap \{ \{H_a-h, r\}=0\}\cap \{ \{H_a-h, \{H_a-h, r\}\}\leq 0\}\cap \{p_\theta \leq 2c\}.
$$
But by Lemma \ref{lem:empty} the set on the right hand side is empty, which brings us to a contradiction.

\hfill $\square$

\printbibliography
\end{document}